\newtheorem{lemm}{Lemma}[section]
\newtheorem{theorem}{Theorem}[section]
\newtheorem{proposition}{Proposition}[section]
\newtheorem{lemma}{Lemma}[section]
\newtheorem{cor}{Corollary}[section]
\newtheorem{propos}[lemm]{Proposition}
\newtheorem*{thm*}{ Theorem}
\newenvironment{defi}{\medskip\noindent{\sc
Definition}. }{\goodbreak\medskip}
\newenvironment{nota}{\medskip\noindent{\sc
Notation}.}{\goodbreak\medskip}
\newenvironment{remk}{\noindent{\sc
Remark}. }{\goodbreak\vskip10pt}
\newenvironment{remks}{\noindent{\sc
Remarks}. }{\goodbreak\vskip10pt}
\newenvironment{notas}{\medskip\noindent{\sc
Notations}. }{\goodbreak\medskip}
\def\cal{\mathcal}
\def\cF{{\mathcal F}}
\def\ct{{\mathcal T}}
\def\cw{{\mathcal W}}
\def\cg{{\mathcal G}}
\def\ca{{\mathcal A}}
\def\ch{{\mathcal H}}
\def\ck{{\mathcal K}}
\def\cN{{\mathcal N}}
\def\cR{{\mathcal R}}
\def\cU{{\mathcal U}}
\def\cV{{\mathcal V}}
\def\cw{{\mathcal W}}
\def\cv{{\mathcal V}}
\def\R{\mathbb{R}}
\def\A{\mathbb{A}}
\def\Z{\mathbb{Z}}
\def\N{\mathbb{N}}
\def\T{\mathbb{T}}
\def\Q{\mathbb{Q}}
\def\smallskip{\par\vspace{1mm}}
\def\medskip{\par\vspace{2mm}}
\def\bigskip{\par\vspace{3mm}}
\def\thenumber{0}
\def\eq#1{\global\advance\equationcount by 1
   \def\thenumber{\number\equationcount}
                        {$$#1\eqno(\thenumber)$$}}
\tikzset{
xmin/.store in=\xmin, xmin/.default=-1.5, xmin=-1.5,
xmax/.store in=\xmax, xmax/.default=7.5, xmax=7.55,
ymin/.store in=\ymin, ymin/.default=-0.75, ymin=-0.75,
ymax/.store in=\ymax, ymax/.default=3.25, ymax=3.25,
}
\begin{document}

\title[Foliations by curves]{ {Actions of symplectic homeomorphisms/diffeomorphisms on foliations by curves in dimension 2}}

\author{Marie-Claude Arnaud$^{\dag,\ddag}$, Maxime Zavidovique$^{*,**}$}

\email{Marie-Claude.Arnaud@imj-prg.fr \\ maxime.zavidovique@upmc.fr}

\date{}

\keywords{ twist maps, symplectic homeomorphisms,
generating  functions, integrability.}

\subjclass[2010]{37E10, 37E40,  37J10, 37J30, 37J35}

\thanks{$\dag$ Universit\'e Paris 7 -- Denis Diderot
Institut de Math\'ematiques de Jussieu-- Paris Rive Gauche
UMR7586
B\^atiment Sophie Germain
Case 7012
75205 PARIS Cedex 13
, FRANCE } 
\thanks{$\ddag$ member of the {\sl Institut universitaire de France.}}
\thanks{ $*$ IMJ-PRG, UPMC
4 place Jussieu,
Case 247
75252 Paris Cedex 5}
\thanks{ $**$ financ\' e par une bourse PEPS du CNRS}

\begin{abstract}  {The two main results of this paper concern the regularity of the invariant foliation of a $C^0$-integrable symplectic twist diffeomorphisms of the 2-dimensional annulus, namely that
\begin{itemize}
\item the generating function of such a foliation is $C^1$;
\item the foliation is H\" older with exponent $\frac{1}{2}$.
\end{itemize}
We also characterize foliations by graphs that are straightenable via a symplectic homeomorphism and prove that every symplectic homeomorphsim that leaves invariant all the leaves of a straightenable foliation has Arnol'd-Liouville coordinates, in which the Dynamics restricted to the leaves is conjugated to a rotation. We deduce that every Lipschitz integrable symplectic twist diffeomorphisms of the 2-dimensional annulus has Arnol'd-Liouville coordinates and  then provide examples of `strange' Lipschitz foliations in smooth curves that cannot be straightened by a symplectic homeomorphism and cannot be invariant by a symplectic twist diffeomorphism.

}
\end{abstract}

\maketitle
\section{Introduction and Main Results.}\label{SecIntro}
\subsection{Main results}
This article deals with foliations by curves in a 2-dimensional symplectic setting. The questions we raise for such a foliation are
\begin{itemize}
\item When is it (locally or globally) symplectically homeomorphic to the straight foliation\footnote{This will be precisely defined later.}?
\item What can be said on the foliation when it is invariant\footnote{In this article, we will say that a foliation is invariant by $f$ if every leaf is (globally) invariant.}  by a symplectic twist?
\item What can be said on a symplectic Dynamics that preserves such a foliation?
\end{itemize}
Before going more into details, let us explain our motivations.

The first problem in which we were interested is the possible extension of Arnol'd-Liouville theorem (see e.g. \cite{Duis}). This classical theorem concerns Hamiltonian Dynamics associated to a $C^2$ Hamiltonian function endowed with a complete system of independent commuting $C^2$ integrals. Then there exists an invariant $C^2$ foliation into Lagrangian submanifolds and in the neighbourhhood of every compact leaf of this foliation, there exist symplectic $C^1$ angle-action coordinates $H: (q, p)\in \cU\subset M\mapsto (\theta, I)\in U\subset \T^n\times \R^n$ such that in these coordinates
\begin{itemize}
\item the invariant foliation is the straight foliation $I=\text{constant}$;
\item the flow is $(\theta, I)\mapsto (\theta+t\triangledown h(I),I)$ where $h$ is a $C^2$ function.
\end{itemize}
In fact, there are two steps in this result.
\begin{itemize}
\item The first step consists in symplectically straightening the foliation via the chart $H$. The diffeomorphism $H$ is defined via its generating function $S(q, I)$.  We recall
$$H(q, p)=(\theta, I)\Longleftrightarrow \theta=\frac{\partial S}{\partial I}(q, I)\quad\text{et}\quad p=\frac{\partial S}{\partial q}(q, I).$$
A priori this generating function $S$ is only $C^2$ as the foliation was and the diffeomorphism $H$ is only $C^1$,  but because the invariant foliation is $C^2$, we can say a little more: when $I$ is fixed, $\Phi=H^{-1}$ is $C^2$ in the $\theta$-variables
\item Then the second step consists in noticing that a symplectic flow that preserves every leaf of the straight foliation has to be a flow of rotations on every leaf.
\end{itemize}
In \cite{ArnaXue}, the hypothesis concerning the regularity of the invariant foliation was relaxed and the invariant foliation was just assumed to be $C^1$.  In this case, when the Hamiltonian satisfies the so-called A-non degeneracy condition (that contains the case of Tonelli Hamiltonians), the authors proved the existence of a symplectic homeomorphism $H$ straightening the invariant Lagrangian foliation, such that $H^{-1}$ is $C^1$ in the $\theta$ variable and such that the flow is written in the chart: $(\theta, I)\mapsto (\theta+t\triangledown h(I),I)$ where $h$ is a $C^1$ function.

Here we raise the problem of invariant $C^0$ foliation into invariant $C^0$-Lagrangian tori. In high dimension, the first problem is to define what is a $C^0$-foliation into Lagrangian tori. An interesting discussion on this topic is provided in the appendix of  \cite{ArnaXue}, but here we will consider the simplest case: in dimension 2, any foliation into curves can be seen as Lagrangian. Also we will assume that the foliations that we consider are not too complicated, because they are (at least locally in $C^1$ charts) foliations into graphs.

Even in this setting and for a symplectic twist diffeomorphism of the 2-dimensional annulus $\A=\T\times\R$, there exist results in which the authors are able to prove the existence of such an invariant continuous foliation into curves that are graphs, see e.g. \cite{CheSu} or \cite{FloLeC}, but not able to say more (e.g. to describe the Dynamics or prove that the foliation is symplectically straightenable).   Observe too that the case of a Tonelli Hamiltonian with two degrees of freedom corresponds to the case of twist maps by using a Poincar\'e section close to some invariant torus in an energy surface, and that in this setting also the same questions are open and relevant (see e.g. \cite{MaSor}).

\begin{defi}
A map $f:\A\rightarrow\A$ is {\em $C^0$-integrable} if $f$ has an invariant $C^0$-foliation into graphs.
\end{defi}

For such a foliation into graphs of $\theta\in\T\mapsto \eta_c(\theta)\in\R$ where $\int_\T\eta_c(\theta)=c$, we introduce what we  call its  generating function\footnote{This terminology will be better understood when we will introduce the generating functions of a large class of symplectic homeomorphisms. Let us mention that    for a general foliation, the generating function of the foliation is not necessarily the generating function of a symplectic homeomorphism. We will later give conditions for this to be true.} that is $u:\A\rightarrow \R$ defined by
$$u(\theta, c)=\int_0^\theta (\eta_c(t)-ct)dt.$$
\begin{defi}
Let $\cF$ be a continuous foliation of $\A$ into graphs. Then the  unique continuous function $u:\A\rightarrow \R$ that is $C^1$ with respect to the $\T$ variable such that 
\begin{itemize}
\item $\forall c\in\R, u(0, c)=0$,
\item $\forall c\in\R$, the graph of $c+\frac{\partial u}{\partial \theta}(., c)$ is a leaf of $\cF$,
\end{itemize}
is called   the {\em generating function} of $\cF$
\end{defi}

Our first result proves that in the $C^0$ integrable case of twist diffeomorphisms, there is more regularity of the generating function giving the foliation than we should expect.

\begin{theorem}\label{Tgeneder}
Let $f:\A\rightarrow\A$ be a  $C^1$ symplectic twist diffeomorphism. When $f$ is $C^0$ integrable, the generating function $u$ of its invariant foliation is  $C^1$  \\
Moreover, in this case,   we have\footnote{See the notation $\pi_1$ at the beginning of subsection \ref{ssnotatwist} .}
 \begin{itemize}
 \item the graph of  $c+\frac{\partial u}{\partial \theta}(., c)$ is a leaf of the invariant foliation;
 \item $h_c:\theta \mapsto \theta+\frac{\partial u}{\partial c}(\theta,c)$ is a semi-conjugacy between the projected Dynamics $g_c: \theta\mapsto \pi_1\circ f\big(\theta, c+\frac{\partial u}{\partial \theta}(\theta,c)\big)$ and a rotation $R$ of $\T$, i.e. $h_{c}\circ g_c=R\circ h_{c}.$
   \end{itemize}
 \end{theorem}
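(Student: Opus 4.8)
The plan is to reduce the whole statement to one regularity claim: that $\partial u/\partial c$ exists at every point and is jointly continuous. Indeed $\partial u/\partial\theta(\theta,c)=\eta_c(\theta)-c$ is automatically jointly continuous — this is what the $C^0$-foliation hypothesis says — the first bullet is merely the defining property of the generating function, and, as I note at the end, the second bullet then follows essentially formally. To handle the $c$-regularity I would first use that the leaves of a foliation are pairwise disjoint graphs, hence totally ordered: $c\mapsto\eta_c(\theta)$ is strictly increasing for each fixed $\theta\in\T$, so for $\theta$ in a fundamental interval $[0,1]$ the map $s\mapsto\int_0^\theta\eta_s=u(\theta,s)+s\theta$ is non-decreasing. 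Hence the one-sided derivatives of $s\mapsto u(\theta,s)$ at $c$ exist for all $(\theta,c)$; setting $h_c^{\pm}(\theta):=\theta+\partial_c^{\pm}u(\theta,c)$, which is the corresponding one-sided derivative at $c$ of $s\mapsto\int_0^\theta\eta_s$, one checks that $h_c^{\pm}$ is non-decreasing in $\theta$, with $h_c^{\pm}(0)=0$, $h_c^{\pm}(1)=1$ and $h_c^{\pm}(\theta+1)=h_c^{\pm}(\theta)+1$ (using $\int_0^1\eta_s=s$ and periodicity of $\eta_s$). Thus $h_c^{+}$ and $h_c^{-}$ are the distribution functions of Borel probability measures $\mu_c^{+},\mu_c^{-}$ on $\T$, and it suffices to prove: (i) $\mu_c^{+}=\mu_c^{-}=:\mu_c$ for every $c$; (ii) each $\mu_c$ has no atom; (iii) $c\mapsto\mu_c$ is weak-$*$ continuous. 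These give $\partial_c u(\theta,c)=\mu_c((0,\theta])-\theta$, jointly continuous, hence $u\in C^1$.

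The dynamics enters through area preservation. The open region $B_\varepsilon$ trapped between the invariant leaves $\Gamma_c=\{r=\eta_c(\theta)\}$ and $\Gamma_{c+\varepsilon}$ is $f$-invariant, and $f$ preserves area; since $\mathrm{area}(B_\varepsilon)=\varepsilon$, the normalised area on $B_\varepsilon$ is an $f$-invariant probability measure whose push-forward under $(\theta,r)\mapsto\theta$ is exactly $\mu_{c,\varepsilon}:=\tfrac1\varepsilon(\eta_{c+\varepsilon}-\eta_c)\,d\theta$, and $\mu_{c,\varepsilon}\to\mu_c^{+}$ weakly as $\varepsilon\to0^{+}$ by the previous paragraph. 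As $\varepsilon\to0^+$, $B_\varepsilon$ collapses onto $\Gamma_c$ and $\sup_{B_\varepsilon}|\pi_1\circ f-g_c\circ\pi_1|\to0$, where $g_c\colon\theta\mapsto\pi_1 f(\theta,\eta_c(\theta))$ is the induced orientation-preserving circle homeomorphism with rotation number $\rho(c)$ (continuous and monotone in $c$); passing to the limit shows that $\mu_c^{+}$, and likewise $\mu_c^{-}$, is $g_c$-invariant. I would then record the elementary fact that the distribution function of any $g_c$-invariant probability measure $\mu$ is a monotone degree-one semi-conjugacy from $g_c$ to the rotation $R_{\rho(c)}$: by invariance $\theta\mapsto\mu((\theta,g_c\theta])$ is locally constant hence constant, and telescoping along an orbit identifies that constant with $\rho(c)$.

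When $\rho(c)\notin\Q$ this settles (i)--(iii) at $c$: $g_c$ has no periodic orbit, so every invariant probability measure is non-atomic and its distribution function is a genuine semi-conjugacy; two of these normalised to vanish at $0$ must coincide, since their difference is a continuous $g_c$-invariant function, hence constant on the unique minimal set and — a semi-conjugacy collapsing every wandering interval to a point — constant on the complementary gaps as well. So $\mu_c^{+}=\mu_c^{-}$ is the unique invariant measure, it is non-atomic, and $c\mapsto\mu_c$ is continuous at such $c$ by the standard $C^0$-stability of the invariant measure of a uniquely ergodic homeomorphism. The main obstacle, essentially the only one, is the set where $\rho(c)=p/q\in\Q$, since there $g_c$ need not be uniquely ergodic and the uniqueness argument collapses. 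Here I would press the foliation harder: a hyperbolic periodic point would carry invariant branches transverse to the leaves, an elliptic one would be surrounded by small closed invariant curves, both incompatible with a foliation by graphs — so every period-$q$ point is non-isolated in its leaf, i.e.\ $g_c^{q}=\mathrm{id}$. If $\rho^{-1}(p/q)$ has non-empty interior, then $f^{q}=\mathrm{id}$ on an open strip, and a linearization (Bochner-type) argument for this finite-order diffeomorphism shows the foliation is $C^1$ there, so $u\in C^1$ on that strip directly. At an isolated resonant parameter $c_0$ (where $g_{c_0}^{q}=\mathrm{id}$ but $f^{q}\neq\mathrm{id}$ near $\Gamma_{c_0}$) I would conclude by approximation from the surrounding irrational-rotation-number parameters, whose invariant measures converge — because of the monotonicity in $c$ and the finite-order structure of $g_{c_0}$ — to one and the same non-atomic measure from both sides; this yields (i)--(iii) at $c_0$ too, and hence $u\in C^1$.

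Finally, once $u\in C^1$ the second bullet is immediate. On the lift, $h_c(\theta)=\theta+\partial_c u(\theta,c)=\mu_c((0,\theta])$, so the $g_c$-invariance of $\mu_c$ and the ``locally constant hence constant'' observation give $h_c\circ g_c=h_c+\rho(c)$; that is, $h_c$ semi-conjugates $g_c\colon\theta\mapsto\pi_1 f\big(\theta,c+\tfrac{\partial u}{\partial\theta}(\theta,c)\big)$ to the rotation $R=R_{\rho(c)}$. The first bullet is the defining property of $u$. I expect the rational-rotation-number analysis of the third paragraph to be the genuinely delicate part; the rest rests only on elementary monotonicity, area preservation, and classical one-dimensional dynamics.
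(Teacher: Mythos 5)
Your framework is close to the paper's in spirit — both proofs use area preservation to identify the (one-sided) $c$-derivative of $u$ with the distribution function of a $g_c$-invariant measure, handle irrational rotation numbers by unique ergodicity, and single out the rational case as the hard step. Two remarks, one minor and one substantive.

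The minor one: for a $C^0$-integrable symplectic twist diffeomorphism the rotation number $\rho$ is a strictly increasing homeomorphism of $\R$ (the twist forces $\rho(c_1)<\rho(c_2)$ for $c_1<c_2$), so $\rho^{-1}(p/q)$ is always a single point and your ``non-empty interior'' branch is vacuous. It does not hurt, but it also distracts from the real difficulty — and the Bochner-linearization claim you make in that branch would not, in fact, deliver $C^1$-regularity of the invariant foliation: if $f^q=\mathrm{id}$ on a strip then \emph{every} foliation of that strip is $f^q$-invariant, so periodicity alone gives no smoothing.

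The substantive gap is at the isolated rational parameter $c_0$, which you yourself flag as ``the genuinely delicate part'' but then do not actually prove. You must establish three things there: that $\mu_{c_0}^+=\mu_{c_0}^-$, that this common measure is non-atomic, and that $c\mapsto\mu_c$ is weak-$*$ continuous at $c_0$. Appealing to ``monotonicity in $c$ and the finite-order structure of $g_{c_0}$'' does not give any of these: monotonicity only guarantees that the one-sided limits exist, not that they agree, and since $g_{c_0}^q=\mathrm{id}$ the homeomorphism $g_{c_0}$ is very far from uniquely ergodic — there is a huge simplex of invariant measures, including purely atomic ones, so there is no soft reason for the two one-sided limits to coincide or to be non-atomic. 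This is precisely where the paper does its real work: after normalizing so that $\eta_{c_0}\equiv 0$ and $\rho(c_0)=0$, it considers the ``triangular'' region bounded by the vertical $\{\theta\}\times\R$, its image $f(\{\theta\}\times\R)$, and the graph of $\eta_c$, shows via ergodic decomposition of Lebesgue measure in the invariant sub-annulus that this region's area is independent of $\theta$, and combines this with Green-bundle estimates on the slopes to derive the two-sided asymptotic $\eta_c(\theta)=c\big(\int_\T s^{-1/2}\big)^{-1}\big(s(\theta)^{-1/2}+o(1)\big)$ uniformly in $\theta$ as $c\to 0$. That formula simultaneously yields (i) a two-sided derivative $\partial\eta_c/\partial c|_{c_0}$, (ii) non-atomicity (the limit measure has continuous positive density $\propto s(\theta)^{-1/2}$), and — after a further orbit-counting argument estimating the number of points $\tilde g_c^j(0)\in[0,\theta]$ — (iii) the continuity of $c\mapsto\mu_c$ at $c_0$. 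Your proposal is missing this entire geometric/ergodic computation, and without it the claim at rational parameters, and hence the $C^1$-regularity of $u$, is not established.
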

 This allows us to give an example of a foliation of the annulus into smooth graphs that cannot be invariant by a $C^0$-integrable symplectic twist diffeomorphism. But we will see in subsection \ref{sstrange} that it can be invariant by an exact symplectic twist homeomorphism that is a $C^1$-diffeomorphism.
 
 \begin{cor}\label{Corstrangefolia} Let $\varepsilon:\R\rightarrow \R$ be a non-$C^1$ function that is $\frac{1}{4\pi}$-Lipschitz. Then the function
$$(\theta,c)\mapsto u(\theta,c)=\frac{\varepsilon(c)}{2\pi}\sin(2\pi \theta)$$
is the generating function of  a  foliation of $\A$ into smooth graphs of $\theta\in\T\mapsto c+\varepsilon(c)\cos (2\pi \theta)$ that is invariant by no $C^0$-integrable symplectic twist diffeomorphism.
 \end{cor}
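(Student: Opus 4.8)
The strategy is to use Theorem \ref{Tgeneder} as a black box and derive a contradiction from the explicit form of $u$. First I would check that the given $u(\theta,c)=\frac{\varepsilon(c)}{2\pi}\sin(2\pi\theta)$ really is the generating function of a genuine foliation into smooth graphs: by definition we need $u$ continuous, $C^1$ in $\theta$, with $u(0,c)=0$, and the graphs $\theta\mapsto c+\partial_\theta u(\theta,c)=c+\varepsilon(c)\cos(2\pi\theta)$ to form a foliation of $\A$. The normalization $u(0,c)=0$ is immediate since $\sin 0=0$. That the graphs are disjoint and cover $\A$ amounts to showing that $c\mapsto c+\varepsilon(c)\cos(2\pi\theta)$ is a homeomorphism of $\R$ for each fixed $\theta$; since $\varepsilon$ is $\frac{1}{4\pi}$-Lipschitz, the map $c\mapsto c+\varepsilon(c)\cos(2\pi\theta)$ has increments bounded below by $1-\frac{1}{4\pi}\cdot 2\pi=\frac12>0$ (here I use that $|\cos(2\pi\theta)|\le 1$ and the Lipschitz bound), hence is a strictly increasing proper map, so a homeomorphism; continuity in $(\theta,c)$ of the leaves is clear. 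This gives a continuous foliation into $C^\infty$ graphs whose generating function is exactly $u$, by the uniqueness clause in the definition of the generating function.

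Next, suppose for contradiction that this foliation is invariant by some $C^0$-integrable symplectic twist diffeomorphism $f$. Then Theorem \ref{Tgeneder} applies and tells us that $u$ is $C^1$ on all of $\A$ — in particular $\partial_c u$ exists and is continuous. But $\partial_c u(\theta,c)=\frac{\varepsilon'(c)}{2\pi}\sin(2\pi\theta)$ wherever $\varepsilon$ is differentiable, and evaluating at, say, $\theta=\frac14$ gives $\partial_c u(\tfrac14,c)=\frac{\varepsilon'(c)}{2\pi}$. If $\partial_c u$ were continuous in $c$ at $\theta=\frac14$, then $\varepsilon$ would be $C^1$, contradicting the hypothesis that $\varepsilon$ is not $C^1$. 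More carefully, to avoid presupposing differentiability of $\varepsilon$, I would argue directly: the partial derivative $c\mapsto \partial_c u(\tfrac14,c)=\lim_{h\to 0}\frac{u(\tfrac14,c+h)-u(\tfrac14,c)}{h}=\frac{1}{2\pi}\lim_{h\to 0}\frac{\varepsilon(c+h)-\varepsilon(c)}{h}$ exists at a point $c$ if and only if $\varepsilon$ is differentiable at $c$, and then equals $\frac{\varepsilon'(c)}{2\pi}$; so $u$ being $C^1$ forces $\varepsilon$ to be differentiable everywhere with continuous derivative, i.e. $C^1$ — contradiction.

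The only genuinely delicate point is the verification that $u$ is the generating function of an honest foliation, i.e. that the Lipschitz constant $\frac{1}{4\pi}$ is exactly what is needed for the leaves to be pairwise disjoint and to exhaust $\A$; this is the step where the constant in the statement is used, and it is a short monotonicity computation as sketched above. Everything else is a direct invocation of Theorem \ref{Tgeneder} together with the elementary observation that partial differentiability of $(\theta,c)\mapsto\frac{\varepsilon(c)}{2\pi}\sin(2\pi\theta)$ in $c$ at non-zero values of $\sin(2\pi\theta)$ is equivalent to differentiability of $\varepsilon$. I expect no real obstacle beyond bookkeeping; the content of the corollary is entirely carried by Theorem \ref{Tgeneder}.
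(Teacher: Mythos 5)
Your proof is correct and follows essentially the same route as the paper: verify that the graphs $\theta\mapsto c+\varepsilon(c)\cos(2\pi\theta)$ genuinely foliate $\A$, then invoke Theorem~\ref{Tgeneder} to conclude that an invariant $C^0$-foliation of a symplectic twist diffeomorphism must have a $C^1$ generating function, which fails because $\partial_c u(\tfrac14,c)=\frac{\varepsilon'(c)}{2\pi}$ would force $\varepsilon\in C^1$. One minor arithmetic slip: since $|\cos(2\pi\theta)|\le 1$ and $\varepsilon$ is $\frac{1}{4\pi}$-Lipschitz, the lower bound on the increments of $c\mapsto c+\varepsilon(c)\cos(2\pi\theta)$ is $1-\frac{1}{4\pi}$ rather than $1-\frac{1}{4\pi}\cdot 2\pi=\frac12$; both are positive, so the monotonicity conclusion is unaffected (and indeed the paper only needs $\varepsilon$ to be $k$-Lipschitz for some $k<1$).
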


 The striking fact is the regularity in $c$. Indeed, if we have a $C^k$ foliation in graphs for some $k\geq 1$, we can only claim that $u$ and $\frac{\partial u}{\partial \theta}$ are $C^k$.  So in the $C^0$ case, even the derivability with respect to $c$ is surprising, which is a result of the invariance by a symplectic {\em twist} diffeomorphism. Also,  the fact that the semi-conjugacy $h_c$ continuously depends on $c$ even at a $c$ where the rotation number is rational is very surprising. At an irrational rotation number, this is an easy consequence of the uniqueness of the invariant measure supported on the corresponding leaf, but what happens for a rational rotation number is more subtle.
 
  Another result for $C^0$-integrable twist diffeomorphisms is that the invariant foliation is not only $C^0$, but also $\frac{1}{2}$-H\" older. It is well-known since Birkhoff that it is locally uniformly Lipschitz in the variable $\theta$ and we   prove here some regularity with respect to  $c$.
  \begin{theorem}\label{THolder}
Let $f:\A\rightarrow \A$ be a $C^1$ symplectic twist diffeomorphism that  is $C^0$ integrable with generating function $u$ of its invariant foliation.
Then on every compact subset of $\A$,  the foliation  $(\theta,c)\mapsto \eta_c(\theta)=c+\frac{\partial u}{\partial \theta}(\theta, c)$ is uniformly $\frac{1}{2}$-H\"older in the variable $c$. 
 \end{theorem}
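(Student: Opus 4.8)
The plan is to derive the $\tfrac12$-H\"older bound in $c$ from Birkhoff's a priori Lipschitz bound in $\theta$ (recalled above) together with the identity $\int_\T(\eta_{c'}-\eta_c)=c'-c$ ("area between two leaves $=$ difference of their labels"); the Dynamics, hence Theorem~\ref{Tgeneder}, plays no role here. First I would fix a compact set $\ck\subset\A$, pick a compact interval $[a,b]$ with $\ck\subset\T\times[a,b]$, and note that since distinct leaves of a foliation into graphs are disjoint graphs over $\T$ they are totally ordered pointwise, and that because $\int_\T\eta_c=c$ this order matches the parameter: for $a\le c<c'\le b$ one has $\eta_c<\eta_{c'}$ everywhere. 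Hence all leaves $\eta_c$, $c\in[a,b]$, lie in the fixed compact region $R=\T\times[\min_\T\eta_a,\max_\T\eta_b]$ of $\A$, so by Birkhoff there is a constant $K=K(f,R)$ such that every such $\eta_c$ is $K$-Lipschitz; in particular the gap $g:=\eta_{c'}-\eta_c$ is a nonnegative $2K$-Lipschitz function on $\T$ with $\int_\T g=c'-c$.

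The heart of the matter is then a soft estimate: if $g\colon\T\to[0,\infty)$ is $L$-Lipschitz with $\int_\T g=\delta$ and $M:=\|g\|_\infty$ is attained at $\theta_0$, then from $g\ge\max\bigl(0,\,M-L\,d(\cdot,\theta_0)\bigr)$ (with $d$ the distance on $\T$, normalised so $|\T|=1$) one gets, as long as $M\le L/2$ so that the cone on the right does not wrap around $\T$, the bound $\delta\ge M^2/L$, i.e.\ $M\le\sqrt{L\delta}$; and if instead $M>L/2$ then $g\ge M-L/2>0$ everywhere, so $\delta\ge M-L/4\ge M/2$, whence $M\le2\delta\le2\sqrt D\,\sqrt\delta$ once one uses $\delta\le M\le D$ with $D$ the height of $R$. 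Plugging $L=2K$ gives
$$\sup_{\theta\in\T}\bigl(\eta_{c'}(\theta)-\eta_c(\theta)\bigr)\ \le\ \max\bigl(\sqrt{2K},\,2\sqrt D\bigr)\,\sqrt{c'-c}\qquad(a\le c<c'\le b),$$
which is exactly the asserted uniform $\tfrac12$-H\"older bound on $\ck$, with a constant depending only on $f$ and $\ck$ through $K$ and $D$.

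I expect the only genuine ingredient to be Birkhoff's Lipschitz estimate, and specifically the fact that it is \emph{uniform} over the compact family $\{\eta_c:c\in[a,b]\}$ --- this is why confining all these leaves inside a single compact region $R$ is the right first move; everything afterwards is the convexity computation above, whose one delicate point is the "wrap-around" regime $M>L/2$, which has to be closed off by the cruder second estimate rather than by the cone. It is also worth recording that this argument produces exactly the exponent $\tfrac12$, and that, unlike in Theorem~\ref{Tgeneder}, no information about the restricted Dynamics (in particular near rational rotation numbers) is needed for the H\"older regularity.
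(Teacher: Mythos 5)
Your proof is correct, and it takes a genuinely different---and considerably more elementary---route than the paper's. The paper proves Theorem~\ref{THolder} by introducing the regions $D(\theta)$ bounded by a vertical $V$, its image $F(V)$, and the two leaves; it then uses the ergodic decomposition of Lebesgue measure to show that the area of $D(\theta)$ is $\theta$-independent, splits $D(\theta)$ into three pieces $D_1,D_2,D_3$, and finally deploys the Green-bundle slope bounds $s_{\pm 1}, s_{\pm 2}$ to estimate the ``triangle'' $D_3$ and extract the crucial quadratic term; the proof also invokes Theorem~\ref{Tgeneder} at the outset to know that $u$ is $C^1$ and hence Lipschitz on compacts. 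Your argument replaces all of this with the observation that the gap $g=\eta_{c'}-\eta_c$ is nonnegative (because distinct leaves are disjoint graphs and $\int_\T g = c'-c>0$ forces the ordering), uniformly $2K$-Lipschitz on compacts (Birkhoff), and integrates to $c'-c$; the Hölder bound then drops out of the cone estimate $\delta \ge M^2/L$ (with your wrap-around case correctly treated via $\delta\ge M-L/4\ge M/2$). What this buys: your proof does not need Theorem~\ref{Tgeneder}, does not need Green bundles, does not need the ergodic decomposition, and in fact does not use the dynamics at all beyond Birkhoff's a priori Lipschitz estimate; it proves the slightly more general fact that any $C^0$ foliation into uniformly Lipschitz graphs parametrized by leaf average is automatically $\tfrac12$-Hölder in the parameter. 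Both arguments obtain the exponent $\tfrac12$ from comparing a quadratic quantity to the linear quantity $c'-c$; in the paper the quadratic term is the area of $D_3$ estimated via the Green slopes, in yours it is the area under the Lipschitz cone. One small point of bookkeeping you should tighten when writing this up: since $\ck$ is a compact subset of $\A$ in $(\theta,r)$-coordinates, you should first pass to a compact interval $[a,b]$ of $c$-values via the homeomorphism $(\theta,c)\mapsto(\theta,\eta_c(\theta))$, and only then enclose the corresponding leaves in the compact region $R$; as written, $\ck\subset\T\times[a,b]$ slightly conflates the $r$- and $c$-coordinates, though the fix is routine.
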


 In the $C^0$-integrable case, the Dynamics restricted to a leaf with a rational rotation number is completely periodic. \\
 It is an open question if it can be a Denjoy counter-example  when restricted to a leaf with an irrational rotation number. \\
 With  the notations Theorem \ref{Tgeneder}, let us observe that
 when $f:\A\rightarrow \A$ is $C^0$ integrable, there exists a dense $G_\delta$ subset ${\cal G}$ of $\R$ such that for every $c\in {\cal G}$, the Dynamics restricted to the graph of $\eta_c$ is minimal. Indeed,  the set $\cR$ of recurrent points is a $G_\delta$ set with full Lebesgue measure, hence $\cR$ is  dense. Hence there exists a dense $G_\delta$ subset $G_1$ of $\R$ such that for every $c\in G_1$, the set $\big\{ \theta\in \T, \big(\theta, \eta_c(\theta)\big)\quad\text{is}\quad\text{recurrent}\big\}$ is a dense $G_\delta$ subset of $\T$. Hence, for $c\in G_1$, the Dynamics restricted to the graph of $\eta_c$ cannot be Denjoy. If we remove from $G_1$ the countable set of $c$'s that correspond to a rational rotation number, we obtain a dense $G_\delta$ subset of $\R$ such that  the Dynamics $f_{\vert \text{Graph}\textrm{$\big($}c+\frac{\partial u}{\partial \theta}(., c)\textrm{$\big)$}} $ is minimal.

 We will   give some conditions that imply that the Dynamics restricted to a leaf cannot be Denjoy. 
 
 \medskip
 Before this, we need to explain the notion of straightenable foliation. 
 
 \begin{notas}We will work in some open subsets $\cU$, $\cV$ of either $\A$ or $\R^2$, on which we have global symplectic coordinates that we denote by $(\theta, r)$ or $(\theta, c)$. Moreover, we will assume that $\cV=\{ (\theta, r); \theta\in(\alpha, \beta)\quad\text{and}\quad a(\theta)<r<b(\theta)\}$ or $\cV=\A$ where $a, b$ are some continuous functions and that $0\in(\alpha, \beta)$.
\end{notas}
 
Firstly, we  introduce the notion of exact symplectic homeomorphism, which is a particular case of the notion of symplectic homeomorphism that is due to Oh and M\" uller, \cite{OhMu}. Their notion coincides in this 2-dimensional setting with the one of orientation and Lebesgue measure preserving homeomorphism.  
 \begin{defi} An {\em exact symplectic homeomorphism}  from $\cU$ onto $\cV$  is a homeomorphism that is the  limit for the  for the $C^0$ compact-open topology of a sequence  of exact symplectic diffeomorphisms.\footnote{We recall that a diffeomorphism $f:\A\rightarrow \A$ is exact symplectic if   the 1-form $f^*(rd\theta)-rd\theta$ is exact.}.

\end{defi}  

\begin{remk}
\begin{itemize}
\item In $\R^2$, every 1-form is exact and then the notions of symplectic homeomorphisms and exact symplectic homeomorphisms coincide.
\item Let us recall that a symplectic diffeomorphism $f$ of $\A$ that is isotopic to identity is exact symplectic if and only if for every essential\footnote{An essential curve is a simple closed curve that is not homotopic to a point.}   curve $\gamma$ of the annulus, the algebraic area between $\gamma$ and $f(\gamma)$ is zero.
\end{itemize}
\end{remk}
A remarkable tool can be associated to the exact symplectic homeomorphisms that maps the standard horizontal foliation onto a foliation that is transverse to the vertical one.  This is called a generating function.

 \begin{theorem}[and definition]\label{TC0arn} We use the same meaning for $\cU$ and $\cV$ as in the notations.\\
 Let $\Phi:\cU\rightarrow \cV$ be an exact symplectic homeomorphism that maps the standard horizontal foliation onto a foliation $\cF$ that is transverse to the vertical one and that preserves the orientation  of the leaves\footnote{Here we means the orientation projected on the horizontal foliation.}.
 We use the notation $$\cw=\{ (\theta, c); \exists (x, c)\in \cU, \Phi(x, c)=(\theta, r)\in \cV\}=[\alpha, \beta]\times I$$ where $I$ is either  $ [c_-, c_+]$ or $ \R$.
  Then there exists a $C^1$ function $u:\cw\rightarrow \R$ such that $u(\beta,c)=u(\alpha,c)$\footnote{When $[\alpha, \beta]=\T$, we have $\alpha=\beta$ and there is no condition.} and
 $$\Phi(x, c)=(\theta, r)\Longleftrightarrow x=\theta+\frac{\partial u}{\partial c}(\theta, c)\quad{\rm and}\quad r=c+\frac{\partial u}{\partial \theta}(\theta, c).$$
In particular, when defined, every map $\theta\mapsto \theta +\frac{\partial u}{\partial c}(\theta, c)$ is injective and $\big\{\big(\theta, c+\frac{\partial u}{\partial \theta}(\theta, c)\big); (\theta, c)\in \cw\big\}$ is a leaf of the image of the standard horizontal foliation.\\
The function $u$ is called a {\em generating function} for $\Phi$.

Conversely,  we consider a  $C^0$-foliation of $\cV$ into graphs of $\eta_c$ for $c\in I$ and     assume that there exists a $C^1$ map $u : \cw \to \R$ such that 
\begin{itemize}
\item $u(0,c) = 0$ for all $c\in I$,
\item the graph of $\theta\mapsto c+ \frac{\partial u}{\partial \theta}(\theta,c)$ defines a leaf of the original foliation,
\item for all $c\in I$, the map $\theta \mapsto \theta + \frac{\partial u}{\partial c}(\theta , c)$ is injective.
\end{itemize}
Then the exact symplectic homeomorphism defined by
$$\Phi(x, c)=(\theta, r)\Longleftrightarrow x=\theta+\frac{\partial u}{\partial c}(\theta, c)\quad{\rm and}\quad r=c+\frac{\partial u}{\partial \theta}(\theta, c)$$
maps the standard horizontal foliation onto the original foliation. 
 
 \end{theorem}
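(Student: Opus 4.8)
The plan is to treat the two directions separately, but to note that each direction amounts to showing that a certain formula defines a homeomorphism and that this homeomorphism is a $C^0$-limit of exact symplectic diffeomorphisms. I would begin with the converse (constructive) direction, since it is the cleaner of the two and its techniques feed back into the first direction. Given the $C^1$ function $u:\cw\to\R$ with the three listed properties, define $\Phi$ by the stated formulas. The injectivity hypothesis on $\theta\mapsto\theta+\frac{\partial u}{\partial c}(\theta,c)$ says precisely that, at each fixed level $c$, the first coordinate map is a continuous injection of an interval (or of $\T$, using the boundary condition $u(\beta,c)=u(\alpha,c)$ which makes it well defined on the circle), hence a homeomorphism onto its image; so $\Phi$ restricted to each horizontal leaf is a homeomorphism onto the leaf $\{(\theta,c+\frac{\partial u}{\partial\theta}(\theta,c))\}$ of the target foliation, and the second property guarantees this is indeed a leaf of the prescribed foliation. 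Global injectivity of $\Phi$ then follows because distinct levels $c$ go to distinct leaves (the foliation is a genuine partition into graphs), and continuity of $\Phi$ and of $\Phi^{-1}$ follows from continuity of $u$, $\frac{\partial u}{\partial\theta}$, $\frac{\partial u}{\partial c}$ together with the fact that a continuous bijection between the relevant planar/annular domains that is fiberwise a homeomorphism and maps fibers to a continuous foliation is a homeomorphism (invariance of domain, or a direct compactness argument on compact subsets). Then I would check that $\Phi$ is a $C^0$-limit of exact symplectic diffeomorphisms: approximate $u$ in $C^1$ by smooth functions $u_n$ still satisfying $u_n(0,c)=0$ and the boundary condition, and perturb them slightly so that $\theta\mapsto\theta+\frac{\partial u_n}{\partial c}(\theta,c)$ is a diffeomorphism for every $c$ (adding $\varepsilon_n$ times a fixed smooth bump in $c$ to $u$ makes the $c$-derivative a small perturbation and one can arrange the relevant derivative to stay positive); the maps $\Phi_n$ built from $u_n$ by the same formulas are then $C^1$ diffeomorphisms, they are exact symplectic because the generating-function relations $x\,dc - r\,d\theta = d(\text{something}) $ — concretely $\Phi_n^*(r\,d\theta)-c\,d\theta$ differs from an exact form by $-d(u_n)$ type bookkeeping, i.e. the area condition holds by the fundamental theorem of calculus applied to $u_n$ on essential curves — and $\Phi_n\to\Phi$ uniformly on compact sets because $u_n\to u$ in $C^1$.

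For the direct direction, start from an exact symplectic homeomorphism $\Phi:\cU\to\cV$ carrying the horizontal foliation to a foliation $\cF$ transverse to the vertical and preserving leaf orientation. Transversality to the vertical foliation together with orientation preservation means each image leaf is a graph $r=\eta_c(\theta)$ over the full $\theta$-interval, and that the first-coordinate map $x\mapsto\pi_1\Phi(x,c)=:\varphi_c(x)$ is a monotone (increasing) homeomorphism of the $\theta$-interval (respectively of $\T$). Write $\psi_c=\varphi_c^{-1}$, so that $\Phi^{-1}$ on the leaf $\{c\}$ is $\theta\mapsto(\psi_c(\theta),c)$ and $\Phi(x,c)=(\varphi_c(x),\eta_c(\varphi_c(x)))$. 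I then want to produce the generating function $u$ with $x=\theta+\frac{\partial u}{\partial c}(\theta,c)$ and $r=c+\frac{\partial u}{\partial\theta}(\theta,c)$; equivalently, setting $v(\theta,c)=u(\theta,c)$, one needs $\frac{\partial v}{\partial\theta}(\theta,c)=\eta_c(\theta)-c$ and $\frac{\partial v}{\partial c}(\theta,c)=\psi_c(\theta)-\theta$. The natural candidate is
\begin{equation}\label{eq:candidateu}
u(\theta,c)=\int_0^\theta\big(\eta_c(t)-c\big)\,dt,
\end{equation}
which by construction is $C^1$ in $\theta$ with $\frac{\partial u}{\partial\theta}=\eta_c(\theta)-c$ and satisfies $u(0,c)=0$. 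The content of the theorem is that this $u$ is also $C^1$ in $c$ with $c$-derivative equal to $\psi_c(\theta)-\theta$, and this is where the exactness of $\Phi$ enters decisively.

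The main obstacle — and the heart of the proof — is thus the $C^1$ regularity of $u$ in the $c$-variable and the identification $\frac{\partial u}{\partial c}(\theta,c)=\psi_c(\theta)-\theta$. The strategy is to first establish the identity at the level of $C^1$ approximations and then pass to the limit. Approximate $\Phi$ by exact symplectic \emph{diffeomorphisms} $\Phi_n\to\Phi$ (uniformly on compacts). Each $\Phi_n$ is a symplectic diffeomorphism mapping horizontals to graphs transverse to the vertical, at least on slightly shrunk domains and for $n$ large (transversality and the graph property are $C^1$-open), so by the classical (smooth) generating-function construction each $\Phi_n$ admits a $C^1$ generating function $u_n$ with $\frac{\partial u_n}{\partial\theta}=\eta^n_c(\theta)-c$, $\frac{\partial u_n}{\partial c}=\psi^n_c(\theta)-\theta$, and $u_n(0,c)=0$ — here exactness of $\Phi_n$ is exactly what guarantees the closed $1$-form $(\eta^n_c-c)d\theta+(\psi^n_c-\theta)dc$ is exact, i.e. that $u_n$ is globally well defined with the stated normalization and boundary behavior. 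Now $\eta^n_c\to\eta_c$ and $\psi^n_c\to\psi_c$ locally uniformly in $(\theta,c)$ (from $\Phi_n\to\Phi$ plus transversality, which controls inverses), hence $u_n\to u$ locally uniformly and $\partial_\theta u_n\to\eta_c-c$, $\partial_c u_n\to\psi_c-\theta$ locally uniformly; since the partial derivatives converge uniformly, the limit $u$ is $C^1$ with $\frac{\partial u}{\partial\theta}=\eta_c-c$ and $\frac{\partial u}{\partial c}=\psi_c-\theta$, which is precisely the asserted generating-function relation. Finally, the injectivity statement for $\theta\mapsto\theta+\frac{\partial u}{\partial c}(\theta,c)=\psi_c(\theta)$ is immediate since $\psi_c$ is a homeomorphism, and the boundary condition $u(\beta,c)=u(\alpha,c)$ follows from \eqref{eq:candidateu} together with $\int_\alpha^\beta(\eta_c-c)=0$, which is the exact-symplectic (zero-area) condition for $\Phi$ on the essential curve $\{r=c\}$ in the annulus case (and is vacuous when $[\alpha,\beta]\neq\T$). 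The delicate point to watch in writing this carefully is the interplay between the shrinking domains $\cU_n\subset\cU$ on which $\Phi_n$ has the graph/transversality property and the normalization point $\theta=0$ (which must remain in all domains, guaranteed by $0\in(\alpha,\beta)$), plus uniformity of the convergence of the inverse maps $\psi^n_c$, which requires a quantitative transversality bound — Birkhoff-type a priori Lipschitz estimates in $\theta$ are available and should be invoked to make the family $\{\eta^n_c\}$ equi-Lipschitz and hence the convergence of inverses locally uniform.
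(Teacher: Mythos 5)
Your proof of the \emph{direct} direction has a genuine gap. You propose to approximate $\Phi$ by exact symplectic \emph{diffeomorphisms} $\Phi_n\to\Phi$ and then apply the classical smooth generating-function construction to each $\Phi_n$, asserting that ``transversality and the graph property are $C^1$-open'' so each $\Phi_n$ maps horizontals to graphs transverse to the vertical. But the convergence $\Phi_n\to\Phi$ is only $C^0$ (this is the very definition of symplectic homeomorphism), so there is no $C^1$ control: the images $\Phi_n(\T\times\{c\})$ can oscillate with unbounded slope and need not be graphs at all, even though they converge uniformly to a graph. Consequently the $u_n$, $\eta_c^n$, $\psi_c^n$ you want to use may simply not exist, and the subsequent ``$\partial_\theta u_n\to\eta_c-c$, $\partial_c u_n\to\psi_c-\theta$ locally uniformly'' has nothing to pass to the limit on. The paper instead never approximates $\Phi$: it pulls back the \emph{vertical} foliation through the homeomorphism $\Phi$ to obtain a $C^0$ foliation $\cg=\Phi^{-1}(\cg_0)$ of $\cU$ transverse to the horizontal one, which therefore has its own generating-type function $v$ with a continuous $r$-derivative, and then compares the Lebesgue area of a ``curvilinear rectangle'' bounded by two horizontals and two leaves of $\cg$ with the area of its image (a rectangle bounded by two leaves of $\cF$ and two verticals). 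Area preservation, which $\Phi$ does enjoy as a $C^0$-limit of area-preserving maps, yields the identity
\[
u(\theta_2,c_2)-u(\theta_1,c_2)-u(\theta_2,c_1)+u(\theta_1,c_1)=v(\theta_2,c_2)-v(\theta_2,c_1)-v(\theta_1,c_2)+v(\theta_1,c_1),
\]
and evaluating at $\theta_1=0$ transfers the $C^1$-in-$c$ regularity of $v$ to $u$. This is the key idea your proposal is missing.

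Your \emph{converse} direction also has a soft spot. You want to smooth $u$ to $u_n$ and ``perturb slightly so that $\theta\mapsto\theta+\frac{\partial u_n}{\partial c}(\theta,c)$ is a diffeomorphism,'' suggesting one ``add $\varepsilon_n$ times a fixed smooth bump in $c$.'' Adding a function of $c$ alone to $u$ changes $\frac{\partial u}{\partial c}$ by a function of $c$ alone and therefore does not touch the $\theta$-derivative of $\theta+\frac{\partial u}{\partial c}(\theta,c)$, so this perturbation accomplishes nothing toward monotonicity. The hypothesis gives only that $\theta\mapsto\theta+\frac{\partial u}{\partial c}(\theta,c)$ is injective (hence strictly monotone), with no bound away from having derivative zero, so a generic $C^1$-approximant $u_n$ can fail injectivity. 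The device that makes this work is convolution: if $g$ is strictly increasing and $f$ is an even nonnegative bump with $f'\le0$ on $[0,\infty)$, then $(f*g)'>0$ everywhere (this is the paper's Lemma \ref{Philippe}). Mollifying $u$ by such a kernel in both variables preserves the two monotonicity properties needed (increasingness of $c\mapsto c+\partial_\theta u$ and of $\theta\mapsto\theta+\partial_c u$) and turns them into strict derivative bounds, so that the two ``factor'' maps $F_\varepsilon$ and $G_\varepsilon$ are genuine diffeomorphisms, and $\Phi=\lim F_\varepsilon\circ G_\varepsilon^{-1}$ is then an honest $C^0$-limit of exact symplectic diffeomorphisms. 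Without an argument of this kind your proposed perturbation does not close the step.
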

 
 \begin{cor}\label{Lipnotstraighten} The foliation given in Corollary \ref{Corstrangefolia} cannot be straightened via an exact symplectic homeomorphism that preserves the horizontal orientation of the leaves.  Even locally, in any neighbourhood of points where $u$  is not $C^1$, it cannot be straightened via an exact symplectic homeomorphism that preserves the horizontal orientation of the leaves. 
 
 \end{cor}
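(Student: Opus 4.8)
The plan is to argue by contradiction using the converse-to-direct dictionary of Theorem \ref{TC0arn}. Recall that the foliation from Corollary \ref{Corstrangefolia} has generating function (in the sense of the foliation) $u(\theta,c)=\frac{\varepsilon(c)}{2\pi}\sin(2\pi\theta)$, where $\varepsilon$ is $\frac{1}{4\pi}$-Lipschitz but fails to be $C^1$ at some point $c_0$. Suppose, for contradiction, that there were an exact symplectic homeomorphism $\Phi$, defined on some open set $\cU$ containing a point $(x_0,c_0)$ and preserving the horizontal orientation of the leaves, that straightens this foliation onto the standard horizontal one on a neighbourhood of $(\theta_0,r_0)=\Phi(x_0,c_0)$. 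Then the direct part of Theorem \ref{TC0arn} furnishes a $C^1$ function $\tilde u$ on $\cw$ such that $\Phi(x,c)=(\theta,r)$ iff $x=\theta+\frac{\partial \tilde u}{\partial c}(\theta,c)$ and $r=c+\frac{\partial \tilde u}{\partial\theta}(\theta,c)$, and moreover the graph of $\theta\mapsto c+\frac{\partial\tilde u}{\partial\theta}(\theta,c)$ is a leaf of the foliation.

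The next step is a uniqueness/normalization argument identifying $\tilde u$ with $u$ up to the expected ambiguity. Since the leaf of the foliation through a given height is the graph of $\theta\mapsto c+\varepsilon(c)\cos(2\pi\theta)$ and these graphs are pairwise disjoint and cover $\cV$, the condition ``$\theta\mapsto c+\frac{\partial\tilde u}{\partial\theta}(\theta,c)$ is a leaf'' forces $\frac{\partial\tilde u}{\partial\theta}(\theta,c)=\frac{\partial u}{\partial\theta}(\theta,\lambda(c))$ for a continuous reparametrization $\lambda$ of the transverse parameter (here I use that $c\mapsto c+\varepsilon(c)\cos(2\pi\theta)$ is a homeomorphism onto its image, because $\varepsilon$ is $\frac{1}{4\pi}$-Lipschitz, so leaves are labelled by their mean $c$). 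Integrating in $\theta$ and using $\tilde u(0,c)$ and $\tilde u(\alpha,c)=\tilde u(\beta,c)$ (or the periodicity normalization) yields $\tilde u(\theta,c)=\frac{\varepsilon(\lambda(c))}{2\pi}\sin(2\pi\theta)+\psi(c)$ for some continuous function $\psi$; after the normalization $\tilde u(0,c)=0$ one has $\psi\equiv 0$, so $\tilde u(\theta,c)=\frac{\varepsilon(\lambda(c))}{2\pi}\sin(2\pi\theta)$ on the relevant neighbourhood.

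Finally one extracts the contradiction from the regularity of $\tilde u$. By Theorem \ref{TC0arn}, $\tilde u$ is $C^1$; in particular $\frac{\partial \tilde u}{\partial\theta}(\theta,c)=\varepsilon(\lambda(c))\cos(2\pi\theta)$ is continuous and so $c\mapsto\varepsilon(\lambda(c))$ is continuous — fine so far — but we also need $\frac{\partial\tilde u}{\partial c}$ to exist and be continuous, which forces $c\mapsto \varepsilon(\lambda(c))$ to be $C^1$ (differentiate $\frac{\varepsilon(\lambda(c))}{2\pi}\sin(2\pi\theta)$ in $c$ at a $\theta$ with $\sin(2\pi\theta)\neq 0$). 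On the other hand, the reparametrization $\lambda$ is more than just continuous: because $\Phi$ preserves orientation of the leaves and is a homeomorphism, $\lambda$ is a homeomorphism of an interval, and — this is the crucial point I expect to be the main obstacle — one must show $\lambda$ is in fact locally bi-Lipschitz, or at least regular enough that $C^1$-ness of $\varepsilon\circ\lambda$ transfers to $C^1$-ness of $\varepsilon$ near $c_0$. The cleanest route is to note that $\Phi$ being area-preserving (Lebesgue-measure preserving) pins down $\lambda$ up to an affine change: comparing the area of the region of $\cV$ below a given leaf with the area of its $\Phi$-preimage below the corresponding horizontal line, one gets $\int_0^1 \big(\lambda(c)+\varepsilon(\lambda(c))\cos 2\pi\theta\big)\,d\theta=\lambda(c)$ matched against $\int_0^1 c\, d\theta=c$ after the appropriate normalization, whence $\lambda(c)=c$ (up to an additive constant), so $\tilde u=u$ near $(\theta_0,c_0)$ and the $C^1$-ness of $\tilde u$ directly contradicts the failure of $\varepsilon$, hence of $u$, to be $C^1$ at $c_0$. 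The global statement follows a fortiori, since a global straightening restricts to a local one near any point, in particular near a point where $u$ is not $C^1$.
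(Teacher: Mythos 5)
Your argument for the global statement is essentially correct and close to the paper's: Theorem~\ref{TC0arn} (direct part) forces the existence of a $C^1$ function $\tilde u$ whose $\theta$-derivative is pinned down by the leaves, the boundary condition $\tilde u(\alpha,c)=\tilde u(\beta,c)$ (automatic on $\T$) forces the $c$ appearing in the formula to be the mean of the image leaf, and since the leaves of the strange foliation are already labelled by their mean over $\T$ you get $\lambda=\mathrm{id}$, whence $\tilde u$ differs from $u$ only by a function of $c$ alone and $C^1$-ness of $\tilde u$ contradicts the failure of $\varepsilon$ to be differentiable.

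There is a genuine gap in the local half of the statement, though, and your final sentence has the logical dependency backwards. For a local straightening on a neighbourhood whose $\theta$-extent is a sub-interval $(\alpha,\beta)\subsetneq\T$, the normalization $\tilde u(\alpha,c)=\tilde u(\beta,c)$ identifies $c$ with the \emph{local} mean $\frac{1}{\beta-\alpha}\int_\alpha^\beta$, not the mean over $[0,1]$, so the integral $\int_0^1(\lambda(c)+\varepsilon(\lambda(c))\cos 2\pi\theta)\,d\theta$ you use to pin down $\lambda$ is simply not available. Working out the local matching, one finds instead $\frac{\partial\tilde u}{\partial\theta}(\theta,c)=\big(\lambda(c)-c\big)+\varepsilon(\lambda(c))\cos(2\pi\theta)$ (you dropped the first term, which is zero only in the global case) and the boundary condition gives $c=\lambda(c)+m\,\varepsilon(\lambda(c))$ with $m=\frac{\sin 2\pi\beta-\sin 2\pi\alpha}{2\pi(\beta-\alpha)}$, so $\lambda$ is a genuinely non-trivial bi-Lipschitz reparametrization when $m\ne 0$ and $\tilde u\ne u$ near $(\theta_0,c_0)$. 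To finish one still must argue that $C^1$-ness of $\tilde u$ forces $\lambda$, $\varepsilon\circ\lambda$ (and hence $\varepsilon=(\varepsilon\circ\lambda)\circ\lambda^{-1}$) to be $C^1$; this works by evaluating $\frac{\partial\tilde u}{\partial c}$ at several values of $\theta$ and using bi-Lipschitzness of $\lambda$, but you have not written this step, and without it the local conclusion (and therefore also the ``a fortiori'' deduction) is not established. The paper sidesteps the reparametrization entirely by directly renormalizing the generating function on $[\alpha,\beta]\times I$ to satisfy the boundary condition of Theorem~\ref{TC0arn} and observing it is still not $C^1$; that route is shorter, but whichever way you go, the local mean replaces the global one and must be handled.
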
 
 
 \begin{remks}
 \begin{enumerate}
 \item The formulas of Theorem \ref{TC0arn} can be also written as follows.
 $$\Phi\Big(\theta + \frac{\partial u}{\partial c}(\theta , c), c\Big)=\Big(\theta, c+ \frac{\partial u}{\partial \theta}(\theta,c)\Big).$$

 \item Observe that Theorem \ref{Tgeneder} gives us a $C^1$ function $u$, but not the injectivity of $\theta \mapsto \theta + \frac{\partial u}{\partial c}(\theta , c)$. This is  why a priori the maps $h_c$ are not conjugacies, but only semi-conjugacies and in this case the restricted Dynamics may be Denjoy.
 \end{enumerate}
\end{remks}

We will now give a condition that implies that a foliation is straightenable by an exact symplectic homeomorphism.

\begin{defi}
 \begin{itemize}
\item A foliation into graphs $a\mapsto\eta_a$ is a   {\em Lipschitz foliation} if $(\theta, a)\mapsto \big(\theta, \eta_a(\theta)\big)$ is an homeomorphism that is locally biLipschitz; 
\item if $f$ has an invariant Lipschitz foliation, $f$  is {\em Lipschitz integrable}.
\end{itemize}
 \end{defi}
 The following proposition is a consequence of Theorem  \ref{TC0arn} and results of Minguzzi on the mixed derivative, \cite{Min2014}.
 
 \begin{propos}\label{PLip} 
Let $u:\A\rightarrow\R$ be the generating function of a continuous foliation of $\A$ into graphs. We assume that $u$   is   $C^1$. Then two following assertions are equivalent:
\begin{enumerate}
\item\label{pt1corLip} the foliation is  Lipschitz;
\item\label{pt2corLip}  we have
\begin{itemize}
\item  $\frac{\partial u}{\partial \theta}$ locally Lipschitz continuous; 
\item$\frac{\partial u}{\partial c}$ uniformly Lipschitz continuous in the variable  $\theta$ on any compact set of $c$'s;
\item  for every compact subset $\ck\subset \A$, there exists two constants $k_+>k_->-1$ such that $k_+\geq \frac{\partial^2 u}{\partial \theta\partial c}\geq k_-$  Lebesgue almost everywhere in $\ck$.
\end{itemize} 
\end{enumerate}
  In this case,  $u$ is the generating function of an exact symplectic homeomorphism   $\Phi:\A\rightarrow \A$  that  maps the standard foliation onto the invariant one. \end{propos}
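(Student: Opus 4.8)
The plan is to work with the parametrization $\Psi\colon(\theta,c)\mapsto\bigl(\theta,\eta_c(\theta)\bigr)=\bigl(\theta,c+\tfrac{\partial u}{\partial\theta}(\theta,c)\bigr)$ of $\A$; by definition, assertion (\ref{pt1corLip}) says exactly that $\Psi$ is a locally bi-Lipschitz homeomorphism. Since the only non-trivial component of $\Psi$ is $\eta_c(\theta)-c=\tfrac{\partial u}{\partial\theta}(\theta,c)$, local Lipschitzness of $\Psi$ is equivalent to the first item of (\ref{pt2corLip}); moreover the second item of (\ref{pt2corLip}) is a formal consequence of the first once $u$ is $C^1$, because writing $u(\theta_1,c)-u(\theta_0,c)=\int_{\theta_0}^{\theta_1}\tfrac{\partial u}{\partial\theta}(t,c)\,dt$ one sees that $c\mapsto u(\theta_1,c)-u(\theta_0,c)$ is Lipschitz with constant proportional to $|\theta_1-\theta_0|$, and its $c$-derivative $\tfrac{\partial u}{\partial c}(\theta_1,c)-\tfrac{\partial u}{\partial c}(\theta_0,c)$, being continuous, is then bounded everywhere, not just almost everywhere. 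The real content of the equivalence therefore concerns $\Psi^{-1}$ versus the third item of (\ref{pt2corLip}), and the recurring mechanism is the following: a first difference of $\tfrac{\partial u}{\partial\theta}$ in $c$, or of $\tfrac{\partial u}{\partial c}$ in $\theta$, can be written as the integral of an iterated mixed partial of $u$ (the one-variable Lipschitz hypotheses make the relevant map absolutely continuous, so the fundamental theorem of calculus applies on each fibre); by Minguzzi's results on the mixed derivative \cite{Min2014} the two iterated mixed partials of $u$ agree Lebesgue-almost everywhere, so the pointwise bound $k_+\ge\tfrac{\partial^2 u}{\partial\theta\partial c}\ge k_->-1$ of (\ref{pt2corLip}) may be invoked for whichever of the two is convenient; and an almost-every-fibre statement is promoted to an every-fibre statement because $\tfrac{\partial u}{\partial\theta}$ and $\tfrac{\partial u}{\partial c}$ are jointly continuous and the property ``bi-Lipschitz with prescribed constants'' is closed under pointwise limits.

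For (\ref{pt2corLip})$\Rightarrow$(\ref{pt1corLip}): $\Psi$ is locally Lipschitz by the first item. For the inverse, fix a compact set; from $\eta_{c_1}(\theta)-\eta_{c_0}(\theta)=\int_{c_0}^{c_1}\bigl(1+\tfrac{\partial^2 u}{\partial\theta\partial c}(\theta,s)\bigr)\,ds$ (fundamental theorem of calculus in $c$, legitimate since $\tfrac{\partial u}{\partial\theta}(\theta,\cdot)$ is Lipschitz, and identifying the mixed partials by \cite{Min2014}) and the bounds of (\ref{pt2corLip}) one gets, for almost every $\theta$ and then, by continuity, for every $\theta$, that $c\mapsto\eta_c(\theta)$ is bi-Lipschitz with constants $1+k_-$ and $1+k_+$, uniformly in $\theta$ on compacta. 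Writing $\Psi^{-1}(\theta,r)=\bigl(\theta,\gamma(\theta,r)\bigr)$ and comparing the identities $\gamma(\theta_i,r)+\tfrac{\partial u}{\partial\theta}\bigl(\theta_i,\gamma(\theta_i,r)\bigr)=r$ for $i=0,1$, the lower bound $1+k_->0$ together with the Lipschitzness of $\tfrac{\partial u}{\partial\theta}$ in $\theta$ yields a Lipschitz estimate on $\theta\mapsto\gamma(\theta,r)$; combined with the Lipschitz estimate in $r$ just obtained, this shows $\Psi^{-1}$ is locally Lipschitz, i.e. (\ref{pt1corLip}). Conversely, assuming (\ref{pt1corLip}): the first (hence the second) item of (\ref{pt2corLip}) follows from $\Psi$ being Lipschitz; and $\Psi^{-1}$ Lipschitz forces $c\mapsto\eta_c(\theta)$ to be bi-Lipschitz with constants uniform on compacta, so $1+\tfrac{\partial^2 u}{\partial c\,\partial\theta}(\theta,c)$ lies in a fixed interval $[m,M]$ with $m>0$ for almost every $c$ at each $\theta$, hence for almost every $(\theta,c)$ by Fubini; \cite{Min2014} transfers this to $\tfrac{\partial^2 u}{\partial\theta\,\partial c}$ and gives the third item with $k_\pm=M-1,\,m-1$.

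For the ``in this case'' assertion I would simply invoke the converse part of Theorem \ref{TC0arn}. Its hypotheses are that $u$ is $C^1$ (given), that $u(0,c)=0$ for all $c$, that the graph of $\theta\mapsto c+\tfrac{\partial u}{\partial\theta}(\theta,c)$ is a leaf, and that $\theta\mapsto\theta+\tfrac{\partial u}{\partial c}(\theta,c)$ is injective for each $c$. The first two hold by the very definition of the generating function of a foliation. For the third, set $\phi_c(\theta)=\theta+\tfrac{\partial u}{\partial c}(\theta,c)$: by the second item of (\ref{pt2corLip}) each $\phi_c$ is Lipschitz, and $\phi_c(\theta_1)-\phi_c(\theta_0)=\int_{\theta_0}^{\theta_1}\bigl(1+\tfrac{\partial^2 u}{\partial\theta\,\partial c}(t,c)\bigr)\,dt$, so by the third item of (\ref{pt2corLip}) the map $\phi_c$ is bi-Lipschitz with constants $1+k_-,\,1+k_+$ for almost every $c$; since $(\theta,c)\mapsto\phi_c(\theta)$ is continuous, this holds for every $c$, so each $\phi_c$ is an orientation-preserving (degree-one) homeomorphism of $\T$, in particular injective. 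Theorem \ref{TC0arn} then provides the exact symplectic homeomorphism $\Phi\colon\A\to\A$ with generating function $u$ that straightens the foliation.

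I expect the main difficulty to be a matter of regularity bookkeeping rather than a single hard step: one must justify the fibrewise fundamental theorem of calculus and the use of \cite{Min2014} under hypotheses that provide Lipschitz regularity in only one variable at a time, and one must carefully upgrade the almost-everywhere bounds on the mixed derivative to statements valid along every individual fibre. This is precisely where the assumption that $u$ is $C^1$ (rather than merely that the two separate one-variable Lipschitz conditions hold) is used in an essential way: it is the joint continuity of $\tfrac{\partial u}{\partial\theta}$ and $\tfrac{\partial u}{\partial c}$ that allows the closedness of bi-Lipschitz bounds to propagate the estimates to all parameter values. Once these points are secured, the implicit-function estimates and the verification of the hypotheses of Theorem \ref{TC0arn} are routine.
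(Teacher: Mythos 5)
Your proof is correct and follows the same basic strategy as the paper's: translate the bi-Lipschitz condition on $\Psi\colon(\theta,c)\mapsto(\theta,\eta_c(\theta))$ into fibrewise Lipschitz conditions on $\partial u/\partial\theta$ and $\partial u/\partial c$, invoke Minguzzi's theorem to identify the mixed partials Lebesgue-a.e.\ and extract the bound $k_-\le\partial^2u/\partial\theta\partial c\le k_+$, upgrade a.e.\ bounds to all fibres by the joint continuity coming from $u\in C^1$, and conclude the straightening via the converse part of Theorem \ref{TC0arn}. The only minor deviation is that you deduce the second bullet of (2) directly from the first by integrating $\partial u/\partial\theta$ in $\theta$, whereas the paper obtains it by citing the equivalence asserted in Minguzzi's theorem; this is a slightly more self-contained route but not a genuinely different argument.
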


Observe that Corollary \ref{Lipnotstraighten} gives an example of a Lipschitz foliation into smooth curves that is not straightenable via a symplectic homeomorphism. Hence the hypothesis that $u$ is $C^1$ is crucial in Proposition \ref{PLip}.

\begin{defi}
\begin{itemize}
\item A map $a\mapsto\eta_a$ defines a   {\em $C^k$ foliation} if $(\theta, a)\mapsto \big(\theta, \eta_a(\theta)\big)$ is a $C^k$ diffeomorphism;  if $f$ has an invariant $C^k$ foliation, $f$  is {\em $C^k$ integrable};
\item  following \cite{HiPuSh}, a map $a\mapsto\eta_a$ defines a   {\em $C^k$ lamination} if $(\theta, a)\mapsto \big(\theta, \eta_a(\theta)\big)$ is a homeomorphism,  every $\eta_a$ is $C^k$ and the map $a\mapsto \eta_a$ is continuous when $C^k(\T, \R)$ is endowed with the $C^k$ topology.
\end{itemize}
\end{defi}

\begin{cor}\label{bolle}
Let $k\geq 1$ and $r\mapsto f_r$ be a $C^k$-foliation in graphs. Then there exists a $C^{k-1}$ exact symplectic diffeomorphism\footnote{A $C^0$ diffeomorphism is a homeomorphism.} $\Phi : (\theta,r ) \mapsto \big(h(\theta,r) , \eta( h(\theta,r), r) \big)$ such that for each $r\in \R$, the set $\big\{ \big(\theta,\eta(\theta,r)\big), \theta \in \T\big\}$ is a leaf of the foliation. 
\end{cor}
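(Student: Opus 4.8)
The plan is to reduce Corollary~\ref{bolle} to the converse direction of Theorem~\ref{TC0arn}: I would exhibit an explicit, sufficiently regular generating function for (a reparametrization of) the given foliation, and then keep track of the loss of one derivative.

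First I would normalize the parameter. Writing the foliation as the graphs of $\theta\mapsto f_r(\theta)$, the map $(\theta,r)\mapsto(\theta,f_r(\theta))$ is a $C^k$ diffeomorphism of $\A$, so its Jacobian $\partial_r f_r(\theta)$ never vanishes and has a constant sign; hence $r\mapsto\bar f_r:=\int_\T f_r(\theta)\,d\theta$ is strictly monotone, and it is onto $\R$ since the leaves exhaust $\A$, so it is a $C^k$ diffeomorphism of $\R$. Let $\rho$ be its inverse and set $\eta(\theta,c):=f_{\rho(c)}(\theta)$: this reindexes the same leaves, is again $C^k$, satisfies $\int_\T\eta(\theta,c)\,d\theta=c$, and (since $\rho'$ and $\partial_r f$ have the same sign)
$$\partial_c\eta(\theta,c)=\rho'(c)\,\partial_r f_{\rho(c)}(\theta)>0\qquad\text{for all }(\theta,c).$$
Since the corollary only requires that $\{(\theta,\eta(\theta,c))\}$ be \emph{a} leaf, working with $\eta$ rather than $f$ costs nothing.

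Next I would set $u(\theta,c):=\int_0^\theta\big(\eta(t,c)-c\big)\,dt$. Then $u(0,c)=0$; by the normalization $u(\cdot,c)$ is periodic; $c+\partial_\theta u(\theta,c)=\eta(\theta,c)$, so the graph of $\theta\mapsto c+\partial_\theta u(\theta,c)$ is a leaf; differentiating under the integral sign, $\partial_\theta u=\eta-c$ is $C^k$ while $\partial_c u(\theta,c)=\int_0^\theta\big(\partial_c\eta(t,c)-1\big)\,dt$ is $C^{k-1}$, so $u$ is at least $C^1$; and
$$\frac{d}{d\theta}\big(\theta+\partial_c u(\theta,c)\big)=\partial_c\eta(\theta,c)>0 ,$$
so for each $c$ the map $\theta\mapsto\theta+\partial_c u(\theta,c)$ is an orientation-preserving $C^{k-1}$ diffeomorphism of $\T$, in particular injective. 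The converse part of Theorem~\ref{TC0arn} then applies and produces an exact symplectic homeomorphism $\Phi$, given by $\Phi(x,c)=(\theta,r)\Longleftrightarrow x=\theta+\partial_c u(\theta,c),\ r=c+\partial_\theta u(\theta,c)$, which maps the horizontal foliation onto the given one.

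Finally I would upgrade $\Phi$ from a homeomorphism to a $C^{k-1}$ diffeomorphism by factoring $\Phi=\Psi_2\circ\Psi_1^{-1}$ with $\Psi_1(\theta,c)=\big(\theta+\partial_c u(\theta,c),c\big)$ and $\Psi_2(\theta,c)=\big(\theta,\eta(\theta,c)\big)$: both are diffeomorphisms of $\A$ with Jacobian $\partial_c\eta>0$, $\Psi_2$ is $C^k$ and $\Psi_1$ is $C^{k-1}$, so $\Psi_1^{-1}$, and hence $\Phi$, is $C^{k-1}$ by the inverse function theorem (for $k=1$ this is just "a continuous bijection is a homeomorphism"). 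Denoting by $h(x,c)$ the first component of $\Phi$, so that its second component is $\eta(h(x,c),c)$, and renaming $x,c$ as $\theta,r$ gives the stated form, exactness being provided by Theorem~\ref{TC0arn}; for $k=1$ one only obtains a homeomorphism, consistent with the convention in the statement. I expect the only genuinely delicate point to be the mean-value reparametrization in the first step: it is precisely what forces $\partial_c\eta>0$ and hence makes $\theta\mapsto\theta+\partial_c u(\theta,c)$ a bona fide circle diffeomorphism, without which Theorem~\ref{TC0arn} could not be invoked; everything after that is routine differentiation under the integral sign and the inverse function theorem.
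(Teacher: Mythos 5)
Your proof is correct and follows the same overall strategy as the paper: construct the generating function $u$ of the foliation, apply the converse direction of Theorem~\ref{TC0arn} to produce an exact symplectic homeomorphism $\Phi$, and then upgrade the regularity by factoring $\Phi=\Psi_2\circ\Psi_1^{-1}$ (the paper writes this as $F_0\circ G_0^{-1}$, which is the same decomposition). The one place you diverge is in checking the hypotheses of Theorem~\ref{TC0arn}. The paper routes through Proposition~\ref{PLip}: it observes that a $C^k$ foliation with $k\geq 1$ is locally Lipschitz, so $\Phi$ exists and is an exact symplectic homeomorphism; this implicitly relies on $u$ being $C^1$ and ultimately on Minguzzi's theorem about mixed partial derivatives. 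You instead verify the three hypotheses of Theorem~\ref{TC0arn} directly: after the mean-value reparametrization you compute $\partial_\theta u=\eta-c$ (which is $C^k$), $\partial_c u=\int_0^\theta(\partial_c\eta-1)\,dt$ (which is $C^{k-1}$, hence at least $C^0$ so $u$ is $C^1$), and $\frac{d}{d\theta}\bigl(\theta+\partial_c u\bigr)=\partial_c\eta>0$ so each $\theta\mapsto\theta+\partial_c u(\theta,c)$ is a circle diffeomorphism. This is a more elementary route that avoids the Lipschitz/Minguzzi machinery entirely, which is appropriate since in the $C^k$ setting the required regularity is available classically; it also makes explicit the reparametrization by mean value that the paper leaves implicit in the phrase ``let $u$ be its generating function,'' and the periodicity check $\int_\T(\partial_c\eta-1)=0$ that guarantees $\theta+\partial_c u$ descends to $\T$. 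The final inverse-function-theorem step and the caveat that for $k=1$ one only obtains a homeomorphism are as in the paper.
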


 When a symplectic homeomorphism preserves a symplectic foliation that is symplectically straightenable, the Dynamics is very simple. Let us introduce a notion before explaining this point.
 
 \begin{defi}
  If $f:\A\rightarrow\A$ is a symplectic homeomorphism, {\em $C^0$ Arnol'd-Liouville coordinates} are given by a symplectic homeomorphism $\Phi:\A\rightarrow \A$ such that the standard foliation into graphs $\T\times \{ c\}$ is invariant by $\Phi^{-1}\circ f\circ \Phi$ and $$\Phi^{-1}\circ f\circ \Phi(x, c)=(x+\rho(c), c)$$ for some (continuous) function $\rho:\R\rightarrow \R$. 

 \end{defi}

\begin{propos}\label{Psymfolarn}
Let $f:\A\rightarrow \A$ be a symplectic homeomorphism that preserves (each leaf of) a 
$C^0$-foliation $\cF$. If the foliation is symplectically straightenable (by $\Phi : \A \to \A$ that maps the standard foliation  $\cF_0$ to $\cF=\Phi(\cF_0)$), then the homeomorphism $\Phi $ provides $C^0$ Arnol'd-Liouville coordinates.
 \end{propos}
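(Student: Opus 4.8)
The plan is to transport the whole problem to the straight foliation via $\Phi$ and then invoke Theorem \ref{Tgeneder}'s second bullet point, or rather the self-contained mechanism behind it, in the $C^0$ setting. Set $g = \Phi^{-1}\circ f\circ \Phi$; since $\Phi$ is a symplectic homeomorphism and $f$ is one, $g$ is a symplectic homeomorphism of $\A$, and by hypothesis $g$ preserves each leaf $\T\times\{c\}$ of the standard foliation $\cF_0$. Thus it suffices to prove: \emph{a symplectic (equivalently, orientation- and Lebesgue-measure-preserving) homeomorphism $g$ of $\A$ that leaves each horizontal circle invariant is, on each circle, a rotation}, i.e.\ $g(x,c) = (x+\rho(c),c)$ with $\rho$ continuous.

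First I would fix $c$ and consider $g_c : \T\to\T$, the restriction of $g$ to $\T\times\{c\}$; it is an orientation-preserving circle homeomorphism (orientation preservation comes from $g$ preserving the orientation of the leaves, which follows from $g$ being a $C^0$-limit of symplectic diffeomorphisms isotopic to the identity — or one argues it preserves orientation of $\A$ and each leaf separately). Let $\rho(c)$ be its rotation number. The key step is to rule out any dynamics on the circle other than rigid rotation. For this I would use the area-preservation across the whole annulus: the essential circles $C_c = \T\times\{c\}$ are invariant, so for any $c_1<c_2$ the region $\T\times[c_1,c_2]$ is invariant, and $g$ preserves Lebesgue measure on it. If $\rho(c)$ is irrational, $g_c$ has a unique invariant probability measure on $C_c$; but the normalized Lebesgue measure $dx$ on $C_c$ is the limit (as $c_2\downarrow c_1=c$, suitably rescaled) of the invariant area, hence $g_c$-invariant, so $g_c$ is conjugate to the rotation $R_{\rho(c)}$ and, being measure-preserving for $dx$, actually equals $R_{\rho(c)}$ — more carefully, one shows the conjugacy must be the identity because any circle homeomorphism preserving Lebesgue measure with irrational rotation number is itself the rotation (its invariant measure is $dx$, and a monotone map pushing $dx$ to $dx$ is a rotation). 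If $\rho(c) = p/q$ is rational, the subtlety flagged in the introduction appears: a priori $g_c^q$ could fix a proper closed set and move points in the complementary intervals. Here I would argue that $g_c^q$ preserves Lebesgue measure on $C_c$ and fixes at least one point; on each complementary interval it is a measure-preserving homeomorphism fixing the endpoints, hence the identity there too, so $g_c^q = \mathrm{id}$ and $g_c$ is a rational rotation. Actually the cleanest uniform argument is: $g$ preserves the $1$-form / area, so for every $x$ and every $c$ the "displacement" $x\mapsto \widetilde{g}_c(x)-x$ (lift) has zero mean with respect to $dx$ — and combined with monotonicity and measure preservation this forces it to be constant. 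I would phrase it via: a Lebesgue-measure-preserving orientation-preserving homeomorphism of $\T$ is a rotation (standard), applied leafwise.

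The remaining point is continuity of $c\mapsto\rho(c)$: since $g$ is a homeomorphism, $(x,c)\mapsto g(x,c) = (x+\rho(c),c)$ shows $\rho(c) = \pi_1(g(0,c)) $ is continuous directly, where $\pi_1$ denotes the first coordinate (lifted consistently). Then $\Phi^{-1}\circ f\circ\Phi(x,c) = (x+\rho(c),c)$ is exactly the statement that $\Phi$ provides $C^0$ Arnol'd--Liouville coordinates, which finishes the proof. The main obstacle is the rational rotation number case: one must be sure that measure preservation on the full annulus genuinely forces leafwise measure preservation and hence rigidity, rather than merely preserving the foliation; I would handle this by the limiting argument above (restricting to thin invariant sub-annuli $\T\times[c,c+\varepsilon]$ and normalizing) to get that each $g_c$ preserves $dx$, after which leafwise rigidity is the elementary fact that a circle homeomorphism preserving Lebesgue measure is a rotation, valid for every rotation number.
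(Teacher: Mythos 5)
Your proof is correct, and it reduces to the same starting point as the paper (pass to $g=\Phi^{-1}\circ f\circ\Phi$, which is an area-preserving homeomorphism fixing each horizontal circle), but the mechanism you use to conclude is genuinely different, and a bit more circuitous. The paper simply writes down the equality of areas
$$\theta(r_2-r_1)=\int_{r_1}^{r_2}\big(g_1(\theta,r)-g_1(0,r)\big)\,dr,$$
divides by $r_2-r_1$, and lets $r_2\to r_1$: by continuity of $g_1$ this yields $g_1(\theta,r)=\theta+g_1(0,r)$ directly, with no mention of rotation numbers, invariant measures, or a rational/irrational dichotomy. You instead first establish that each leafwise restriction $g_c$ preserves normalized Lebesgue measure on the circle — by taking a weak-$*$ limit, as $\varepsilon\to0$, of the normalized Lebesgue measures on $\T\times[c,c+\varepsilon]$, which are $g$-invariant and converge to normalized Lebesgue on $C_c$ — and then invoke the elementary rigidity fact that an orientation-preserving circle homeomorphism preserving Lebesgue measure is a rotation (lift and observe $\widetilde g_c(b)-\widetilde g_c(a)=b-a$ forces $\widetilde g_c-\mathrm{id}$ constant). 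Both arguments are sound and elementary; the paper's is shorter because the differentiation-of-the-area identity encodes exactly what you recover by passing to the limit measure. Note that the long detour in the middle of your write-up through the irrational case (unique ergodicity) and the rational case ($g_c^q$ fixing endpoints of complementary intervals) is dispensable: once you know $g_c$ preserves Lebesgue, the rigidity lemma you state at the end applies uniformly and you never need to look at the rotation number; you correctly observe this yourself in the final paragraph, but the earlier discussion could simply be cut. One small point worth making explicit: the formula for the area of $g([0,\theta]\times[r_1,r_2])$ (and likewise your claim that $g_c$ is orientation-preserving on $C_c$) uses that $g$ preserves orientation on each circle, which follows from $g$ being a $C^0$-limit of symplectic diffeomorphisms isotopic to the identity together with preservation of the second coordinate; both your proof and the paper's silently rely on this.
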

 
 \begin{cor}\label{Corota}
A symplectic twist diffeomorphism $f: \A \to \A$ is $C^0$-integrable with the Dynamics on each leaf conjugated to a rotation if and only if the invariant foliation is exact symplectically homeomorphic to the standard foliation. In this case, $f$ admits global $C^0$ Arnol'd-Liouville coordinates. 
\end{cor}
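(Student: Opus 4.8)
The plan is to derive the corollary directly from Theorem \ref{Tgeneder}, Theorem \ref{TC0arn}, and Proposition \ref{Psymfolarn}, so the work is almost entirely bookkeeping about which hypotheses feed which conclusion. First I would prove the easy implication: if the invariant foliation $\cF$ of $f$ is exact symplectically homeomorphic to the standard foliation $\cF_0$ via some $\Phi$, then Proposition \ref{Psymfolarn} applies verbatim (with this $f$ and this $\Phi$), giving that $\Phi^{-1}\circ f\circ\Phi$ preserves each leaf $\T\times\{c\}$ and acts as $(x,c)\mapsto(x+\rho(c),c)$; in particular $f$ has global $C^0$ Arnol'd--Liouville coordinates, and on each leaf the dynamics is conjugate (by the leafwise restriction of $\Phi$) to the rotation $R_{\rho(c)}$. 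So $f$ is $C^0$-integrable with leafwise dynamics conjugate to a rotation.

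For the converse, suppose $f$ is a $C^0$-integrable symplectic twist diffeomorphism whose dynamics on each leaf of its invariant foliation is conjugate to a rotation. By Theorem \ref{Tgeneder}, the generating function $u$ of the invariant foliation is $C^1$, the graph of $c+\partial u/\partial\theta(\cdot,c)$ is the leaf at level $c$, and $h_c:\theta\mapsto\theta+\partial u/\partial c(\theta,c)$ is a \emph{semi}-conjugacy between the projected dynamics $g_c$ and a rotation $R$. The point to extract is that the hypothesis "dynamics on each leaf is conjugate to a rotation" upgrades each $h_c$ from a semi-conjugacy to an honest conjugacy, hence forces $h_c$ to be a homeomorphism of $\T$, i.e. $\theta\mapsto\theta+\partial u/\partial c(\theta,c)$ is injective (and, being a degree-one circle map, a bijection) for every $c$. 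Once injectivity in $\theta$ holds for all $c$, the converse direction of Theorem \ref{TC0arn} (after normalizing so that $u(0,c)=0$, which we may do by subtracting a function of $c$ alone, leaving $\partial u/\partial\theta$ and hence the foliation unchanged, and only shifting $h_c$ by a constant) produces an exact symplectic homeomorphism $\Phi$ with generating function $u$ mapping $\cF_0$ onto the invariant foliation $\cF$; this $\Phi$ also preserves the horizontal orientation of the leaves since $h_c$ is an orientation-preserving circle homeomorphism. Finally Proposition \ref{Psymfolarn} applied to this $\Phi$ gives the global $C^0$ Arnol'd--Liouville coordinates.

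The main obstacle is the step in the converse where one must argue that conjugacy (rather than mere semi-conjugacy) of the leafwise dynamics to a rotation forces the semi-conjugacy $h_c$ produced by Theorem \ref{Tgeneder} to itself be injective. A priori there could be a mismatch: Theorem \ref{Tgeneder} manufactures one particular semi-conjugacy $h_c$, while the hypothesis only asserts the \emph{existence} of \emph{some} conjugacy $\psi_c$; one has to show these are compatible. When the rotation number is irrational this is the usual uniqueness statement: a minimal or uniquely-ergodic circle rotation has an essentially unique semi-conjugacy from a given map, so if any conjugacy exists then $h_c$ must already be one. The delicate case is a rational rotation number, where $g_c$ has periodic orbits; here one must use that a circle homeomorphism with a rational rotation number is conjugate to a rotation only if \emph{all} its orbits are periodic of the same period (no "attracting/repelling" periodic orbits), and then check that this rigidity forces the canonical semi-conjugacy $h_c$ to collapse no arcs, i.e. to be injective. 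I would isolate this as the one genuinely non-formal lemma and handle the irrational and rational cases separately, invoking the continuous dependence of $h_c$ on $c$ (also from Theorem \ref{Tgeneder}) to make sure the resulting $\Phi$ is a homeomorphism of the whole annulus and not just a leafwise-defined bijection.
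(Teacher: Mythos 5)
Your proposal follows the same overall route as the paper: the ``if'' direction is immediate from Proposition~\ref{Psymfolarn}, and the ``only if'' direction runs Theorem~\ref{Tgeneder} to produce the $C^1$ generating function $u$ and the semi-conjugacies $h_c$, upgrades them to conjugacies, and then feeds $u$ into the converse half of Theorem~\ref{TC0arn} followed by Proposition~\ref{Psymfolarn}. The one place you diverge from the paper is in locating the delicate step. You flag the \emph{rational} rotation numbers as the hard case and propose to isolate a lemma showing that $h_c$ collapses no arcs there; but for a $C^0$-integrable symplectic twist diffeomorphism the rational case costs nothing extra: by the results quoted from \cite{Arna1} and restated in Theorem~\ref{Tdiffrat}, the restriction of $f$ to a rational leaf is \emph{completely periodic}, and the second bullet of Theorem~\ref{Tdiffrat} already asserts that $h_c$ is a genuine conjugacy (not merely a semi-conjugacy) whenever $\rho(c)$ is rational, without invoking the new hypothesis at all. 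The hypothesis ``dynamics on each leaf conjugate to a rotation'' is needed precisely in the \emph{irrational} case, where you combine it, exactly as you sketch, with the uniqueness (up to an additive constant) of the semi-conjugacy for a minimal circle homeomorphism to conclude that the canonical $h_c$ is a homeomorphism. Also, the normalization $u(0,c)=0$ you propose to impose is already built into the definition of the generating function of the foliation, so no adjustment is needed before applying Theorem~\ref{TC0arn}. With these two observations your plan collapses to the paper's short argument.
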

 In the case where the invariant foliation by a symplectic twist diffeomorphism is Lipschitz, we are in the case of Proposition \ref{Psymfolarn} and so for every leaf, the restricted Dynamics is not  Denjoy.

 \begin{cor}\label{corLip} 
Let $f:\A\rightarrow\A$ be a symplectic twist diffeomorphism that is Lipschitz integrable. We denote by $u$ the generating function  of its invariant foliation.

Then  $u$ is the generating function of an exact symplectic homeomorphism   $\Phi:\A\rightarrow \A$  that maps the standard  foliation onto the invariant one such that:
$$\forall (x, c)\in \A,\quad  \Phi^{-1}\circ f\circ \Phi(x, c)=(x+\rho(c), c);$$
where $\rho:\R\rightarrow \R$ is an increasing biLipschitz homeomorphism.\\
Moreover, the invariant foliation is a $C^1$ lamination and $\Phi$ admit a partial derivative with respect to $\theta$.
The projected Dynamics $g_c$ restricted to every leaf is $C^1$ conjugated to a rotation via the $C^1$ diffeomorphism $h_c=Id_{\T}+\frac{\partial u}{\partial c}(\cdot, c):\T\rightarrow \T$ such that $h_{c}\circ g_c=R\circ h_{c}$. \end{cor}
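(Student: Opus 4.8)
The plan is to assemble the statement from the pieces already established. First, since $f$ is Lipschitz integrable, by definition its invariant foliation is Lipschitz, and by Theorem \ref{Tgeneder} its generating function $u$ is $C^1$. Applying Proposition \ref{PLip} to this $C^1$ generating function $u$ of the Lipschitz foliation, we obtain that $u$ is the generating function of an exact symplectic homeomorphism $\Phi:\A\to\A$ mapping the standard foliation onto the invariant one; in particular each map $h_c=\mathrm{Id}_\T+\frac{\partial u}{\partial c}(\cdot,c)$ is injective, hence (being a continuous injective degree-one circle map) a homeomorphism of $\T$. Now $f$ preserves each leaf of this symplectically straightenable foliation, so Proposition \ref{Psymfolarn} applies and gives that $\Phi$ furnishes $C^0$ Arnol'd-Liouville coordinates: $\Phi^{-1}\circ f\circ\Phi(x,c)=(x+\rho(c),c)$ for some continuous $\rho:\R\to\R$.

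Next I would identify $\rho$ and the projected dynamics. Conjugating back, $\Phi^{-1}\circ f\circ\Phi$ acts on the leaf $\T\times\{c\}$ as the rotation $x\mapsto x+\rho(c)$, and the $\theta$-component of $\Phi$ restricted to that leaf is precisely $h_c$; therefore $h_c\circ g_c=R_{\rho(c)}\circ h_c$, i.e. $g_c$ is conjugate via $h_c$ to the rotation by $\rho(c)$. Since $h_c$ is a circle homeomorphism, this is a genuine (not merely semi-) conjugacy, so $g_c$ is never Denjoy and $\rho(c)$ is the rotation number of $g_c$. Monotonicity of $\rho$ follows from the twist condition — distinct leaves carry distinct rotation numbers ordered by $c$ (Birkhoff theory for twist maps) — and the biLipschitz property of $\rho$ follows from the two-sided bound $k_+\ge \frac{\partial^2 u}{\partial\theta\partial c}\ge k_->-1$ supplied by Proposition \ref{PLip}, which controls both $h_c$ and its dependence on $c$ from above and below.

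Finally, for the regularity statements: the bound $k_-<\frac{\partial^2 u}{\partial\theta\partial c}$ a.e. together with the Lipschitz continuity of $\frac{\partial u}{\partial\theta}$ means that for each fixed $c$ the leaf $\eta_c=c+\frac{\partial u}{\partial\theta}(\cdot,c)$ is Lipschitz, hence (being also the graph of a function conjugating $g_c$ to a rotation in a twist map, so automatically $C^1$ by the standard Birkhoff/Arnaud regularity for invariant graphs) $C^1$; continuity of $c\mapsto\eta_c$ in the $C^1$ topology gives that the foliation is a $C^1$ lamination. Likewise $h_c=\mathrm{Id}_\T+\frac{\partial u}{\partial c}(\cdot,c)$ is $C^1$ because $\frac{\partial u}{\partial c}$ is, for fixed $c$, $C^1$ in $\theta$ (this is where the mixed-derivative results of Minguzzi feeding Proposition \ref{PLip} are used), and the conjugacy relation transports the smooth structure so that $h_c$ is a $C^1$ diffeomorphism. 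The existence of $\partial\Phi/\partial\theta$ is read off directly from the formula $\Phi(\theta+\frac{\partial u}{\partial c}(\theta,c),c)=(\theta,c+\frac{\partial u}{\partial\theta}(\theta,c))$ after inverting the $C^1$ diffeomorphism $h_c$ in the first coordinate.

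The main obstacle I anticipate is not in the formal chaining of the quoted results but in justifying that the conjugacy $h_c$ and the map $\rho$ inherit the stated one-sided/two-sided Lipschitz bounds \emph{uniformly} on compact sets of $c$: one must track how the constants $k_\pm$ from Proposition \ref{PLip} propagate through the relation $h_c\circ g_c=R_{\rho(c)}\circ h_c$ to bound $\rho(c)-\rho(c')$ both above and below by a multiple of $|c-c'|$, and this requires using the twist estimate for $f$ together with the a.e. bound on $\frac{\partial^2 u}{\partial\theta\partial c}$ rather than any pointwise derivative of $\rho$, which need not exist a priori.
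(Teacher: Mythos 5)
Your high-level chaining of Theorem \ref{Tgeneder}, Proposition \ref{PLip} and Proposition \ref{Psymfolarn} to get the Arnol'd--Liouville form $\Phi^{-1}\circ f\circ\Phi(x,c)=(x+\rho(c),c)$ with $\rho$ continuous and increasing is correct and matches the paper. But two of the remaining claims are not actually established.

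First, you flag the biLipschitz property of $\rho$ as ``the main obstacle'' but do not close it, and the mechanism you gesture at (``the bound $k_+\ge\frac{\partial^2u}{\partial\theta\partial c}\ge k_-$ controls $\rho$ from above and below'') is not by itself enough: that bound gives equi-biLipschitz control of the family $h_c$ in the $\theta$-direction, but to turn it into a two-sided Lipschitz estimate for $c\mapsto\rho(c)$ one also needs a quantitative comparison of rotation numbers. The paper does this in Proposition \ref{rho} via Lemma \ref{lemme rho}: if a circle map is conjugated to a rotation by a $K$-biLipschitz $h$, then $f<g+d$ forces $\rho(f)\le\rho(g)+Kd$ and $f+d<g$ forces $\rho(f)+d/K\le\rho(g)$. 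Combining that with the twist inequality $\tilde g_{c_2}(\theta)-\tilde g_{c_1}(\theta)\in\big[b_{\min}\big(\eta_{c_2}(\theta)-\eta_{c_1}(\theta)\big),\,b_{\max}\big(\eta_{c_2}(\theta)-\eta_{c_1}(\theta)\big)\big]$ and the Lipschitz control of $c\mapsto\eta_c$ is what yields $\rho$ locally biLipschitz. Without this lemma the step is incomplete.

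Second, the claim that $h_c$ is $C^1$ ``because $\frac{\partial u}{\partial c}$ is, for fixed $c$, $C^1$ in $\theta$ (this is where the mixed-derivative results of Minguzzi $\dots$ are used)'' is not correct: Minguzzi's theorem only delivers local Lipschitz continuity of $\frac{\partial u}{\partial c}(\cdot,c)$ and a.e.\ existence and equality of the mixed second derivatives, never $C^1$ regularity. The genuine source of $C^1$-ness, which the paper invokes explicitly, is Corollary 4 of \cite{Arna1}: once one knows that $h_c$ is biLipschitz (so $g_c=h_c^{-1}\circ R_{\rho(c)}\circ h_c$ has equi-Lipschitz iterates), the two Green bundles coincide along the invariant curve, the curve $\eta_c$ is $C^1$, and $h_c$ is a $C^1$ diffeomorphism. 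The same Green-bundle argument is also what makes the foliation a $C^1$ lamination: the Green bundles are equal everywhere, hence (being respectively lower and upper semicontinuous) continuous, and they coincide with the tangent lines to the leaves; this gives the continuity of $c\mapsto\eta_c$ in $C^1$ that you asserted without proof. Your deduction of $\partial\Phi/\partial\theta$ from $\Phi(\Theta,c)=\big(h_c^{-1}(\Theta),\eta_c\circ h_c^{-1}(\Theta)\big)$ is then fine once these $C^1$ facts are in place.
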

Corollary \ref{corLip} provides some $C^0$ Arnol'd-Liouville coordinates.  A similar statement  for Tonelli Hamiltonians is proved in \cite{ArnaXue}, without the fact that the conjugation is $C^1$.

\subsection{Some notations and definitions}\label{ssnotatwist} We will use the following notations.

 \begin{notas}
 \begin{itemize}
 \item $\T=\R/\Z$ is the circle and $\A=\T\times \R$ is the annulus ;  $\pi: 
 \R\rightarrow \T$ is the usual projection;
 \item the universal covering of the annulus is denoted by $p:\R^2\rightarrow \A$;
 \item the corresponding projections are $\pi_1: (\theta, r)\in \A\mapsto \theta\in \T$ and $\pi_2: (\theta, r)\in \A\mapsto r\in \R$; we denote also the corresponding projections of the universal covering by $\pi_1$, $\pi_2~: \R^2\rightarrow \R$;
 \item the Liouville 1-form is defined on $\A$ as being $\lambda=\pi_2d\pi_1=rd\theta$; then $\A$ is endowed with the symplectic form $\omega=-d\lambda$.
 \end{itemize}
\end{notas}

  Let us give  the definition of an exact symplectic twist diffeomorphism.
\begin{defi}
An {\em  symplectic  twist diffeomorphism} $f:\A\rightarrow \A$ is a $C^1$ diffeomorphism such that
\begin{itemize}
\item $f$ is isotopic to identity;
\item $f$ is  symplectic, i.e. if $f^*\omega=\omega$;
\item $f$ has the {\em twist property} i.e. if $F=(F_1, F_2):\R^2\rightarrow \R^2$ is any lift of $f$, for any $\tilde \theta\in \R$, the map $r\in \R\mapsto F_1(\tilde \theta, r)\in \R$ is an increasing $C^1$ diffeomorphism from $\R$ onto $\R$.
\end{itemize}
\end{defi}

\subsection{Content of the different sections} The main tools that will use are tools of ergodic theory, symplectic (continuous or differentiable) Dynamics, in particular symplectic twist maps, Green bundles.
  Let us detail what will be in the different sections
\begin{itemize}
\item Section \ref{secfirst} contains the proof of of Theorem \ref{Tgeneder}; after recalling some generalities about symplectic twist diffeomorphisms, we consider the case of the rational curves by using some ergodic theory, then we prove  regularity of $u$ by using also ergodic theory;
\item Theorem  \ref{THolder}  is proved in section \ref{SHolder};
\item Theorem \ref{TC0arn} is proved in section \ref{SCOarna};
\item Section \ref{Sfollip} contains the proofs of   Proposition \ref{PLip} and Corollary \ref{bolle};
\item Section \ref{shomeomC0} contains   the proofs of 
Proposition \ref{Psymfolarn}, Corollary \ref{Corota}, Corollary  \ref{corLip};
\item Section \ref{sstrange} introduces a strange foliation, provides the proofs of Corollaries \ref{Corstrangefolia} and \ref{Lipnotstraighten} and an example of an 
 exact symplectic twist map that leaves the strange foliation invariant;
 \item Appendix \ref{AppA} contains an  example of a foliation by graphs that is the inverse image of the standard foliation by a symplectic map but not by a symplectic  homeomorphism and Appendix  \ref{ssGreenb} recalls some results about Green bundles.
\end{itemize}

\subsection*{Acknowledgements} The authors are grateful to Philippe Bolle for insightful discussions that helped clarify and simplify some proofs of this work.

\section{Proof of   Theorem \ref{Tgeneder}}\label{secfirst}

We assume that $f:\A\rightarrow \A$ is a $C^k$ symplectic twist diffeomorphism  (with $k\geq 1$) that has a continuous invariant foliation into continuous graphs with generating function denoted by $u$. We write $\eta_c=c+\frac{\partial u}{\partial \theta}(\cdot, c)$ and we recall that 
Birkhoff's theorem (see \cite {Bir1}, \cite{Fa1}  and \cite {He1}) implies that all the $\eta_c$ are Lipschitz.\\
\begin{nota}
For every $c\in\R$, we will denote by $g_c:\T\rightarrow \T$ the restricted-projected Dynamics to the graph of $\eta_c$, i.e
$$g_c(\theta)=\pi_1\circ f\big(\theta, \eta_c(\theta)\big).$$
\end{nota}
\subsection{Some generalities}\label{ss31}\hglue 1 truemm

\begin{notas}
\begin{itemize}
\item In $\R^2$ we denote by $B(x, r)$ the open disc for the usual Euclidean distance with center $x$ and radius $r$;
\item we denote by $R_\alpha:\T\rightarrow \T$ the rotation $R_\alpha(\theta)=\theta+\alpha$;
\item if E is a finite set, $\sharp(E)$ is the number of elements it contains;
\item we denote by $\lfloor \cdot\rfloor : \R\rightarrow \Z$ the integer part.
\end{itemize}


We fix a lift $F:\R^2\rightarrow \R^2$ of $f$.   We denote by $\tilde \eta_c:\R\rightarrow \R$ the lift of $\eta_c$.
We denote by $\rho$ the function that maps $c\in \R$ to the rotation number $\rho(c)\in\R$ of the restriction of $F$ to the graph of   $\tilde\eta_c$.
\end{notas}
 The map $\rho$  is then an increasing homeomorphism. \\
When moreover the foliation is biLipschitz, we will prove that $\rho$ is a biLipschitz homeomorphism (see Proposition \ref{rho}).  
  We recall a well-known result concerning the link between invariant measures and semi-conjugacies for orientation preserving homeomorphisms of $\T$.

\begin{proposition}\label{Pconjmeas} Assume that $\mu_c$ is a non-atomic Borel invariant probability measure by $g_c$. Then,  if $\rho(c)$  is irrational or $g_c$ is $C^0$ conjugated to a rotation,
 the map $h_c:\T\rightarrow \T$ defined by $h_c(\theta)=\int_0^\theta d\mu_c$ is a semi-conjugacy between $g_c$ and the rotation with angle $\rho(c)$, i.e:
$$h_c\big(g_c(\theta)\big)=h_c(\theta)+\rho(c).$$

\end{proposition}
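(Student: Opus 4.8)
The plan is to use the classical construction of the semi-conjugacy as the distribution function of the invariant measure, lifted to the universal cover.

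First I would lift everything to $\R$. Let $\tilde g_c:\R\to\R$ be the lift of $g_c$ determined by the fixed lift $F=(F_1,F_2)$, that is $\tilde g_c(\theta)=F_1\big(\theta,\tilde\eta_c(\theta)\big)$; it is an increasing homeomorphism of $\R$ commuting with the unit translation, and by definition its rotation number is $\rho(c)$. Let $\tilde\mu_c$ be the unique $\Z$-periodic Borel measure on $\R$ projecting onto $\mu_c$; since $\tilde g_c$ commutes with $x\mapsto x+1$ and $(g_c)_*\mu_c=\mu_c$, uniqueness forces $(\tilde g_c)_*\tilde\mu_c=\tilde\mu_c$. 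Define $H:\R\to\R$ by $H(x)=\tilde\mu_c([0,x))$ for $x\ge 0$ and $H(x)=-\tilde\mu_c([x,0))$ for $x<0$, so that $H(y)-H(x)=\tilde\mu_c([x,y))$ whenever $x\le y$. Then $H$ is non-decreasing, it is continuous precisely because $\mu_c$ is non-atomic, and $H(x+1)=H(x)+1$ because $\tilde\mu_c$ is $\Z$-periodic with total mass $1$ on a fundamental domain. In particular $H(0)=0$, $H(1)=1$, and $H$ descends to a continuous degree-one monotone map $h_c:\T\to\T$, which is exactly $\theta\mapsto\int_0^\theta d\mu_c$.

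The heart of the argument is the identity $H\circ\tilde g_c-H\equiv\mathrm{const}$. For $x\le y$, since $\tilde g_c$ is an increasing homeomorphism one has $\tilde g_c([x,y))=[\tilde g_c(x),\tilde g_c(y))$, and since $\tilde\mu_c$ is $\tilde g_c$-invariant and $\tilde g_c$ is a bijection, $\tilde\mu_c\big([\tilde g_c(x),\tilde g_c(y))\big)=\tilde\mu_c\big(\tilde g_c([x,y))\big)=\tilde\mu_c([x,y))$. Rewriting both sides with $H$ gives
\[
H\big(\tilde g_c(y)\big)-H\big(\tilde g_c(x)\big)=H(y)-H(x),
\]
so $x\mapsto H(\tilde g_c(x))-H(x)$ is constant; call this constant $\alpha$. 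Thus $H\circ\tilde g_c=H+\alpha$ on $\R$, which descends to $h_c\circ g_c=R_\alpha\circ h_c$ on $\T$.

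It remains to identify $\alpha$ with $\rho(c)$. Iterating, $H(\tilde g_c^{\,n}(x))=H(x)+n\alpha$ for all $n\in\Z$; since $H-\mathrm{id}$ is continuous and $\Z$-periodic, hence bounded, writing $n\alpha=\big(\tilde g_c^{\,n}(x)-x\big)+O(1)$ and dividing by $n$ yields $\alpha=\lim_n \tilde g_c^{\,n}(x)/n=\rho(c)$. This gives the desired relation $h_c\big(g_c(\theta)\big)=h_c(\theta)+\rho(c)$. I expect no serious obstacle here: the computation uses only invariance and non-atomicity of $\mu_c$ (the stated dichotomy on $\rho(c)$ being the natural context in which such a $\mu_c$ is the relevant one, and in which, in the irrational case, $\mu_c$ is moreover unique by unique ergodicity so that $h_c$ is canonical). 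The only delicate points are bookkeeping ones: fixing coherent lifts, the half-open-interval conventions in the definition of $H$, and the observation that it is precisely the bijectivity of $\tilde g_c$ that transfers the invariance of $\tilde\mu_c$ to the increments of $H$.
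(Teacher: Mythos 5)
Your proof is correct and follows the same blueprint as the paper's: lift $\mu_c$ to the $\Z$-periodic measure $\tilde\mu_c$ on $\R$, observe via invariance that the distribution function $H$ has $\tilde g_c$-invariant increments so that $H\circ\tilde g_c-H$ is constant, and descend to $\T$; non-atomicity supplies continuity in both arguments. Where you diverge is in identifying the constant with $\rho(c)$. The paper does this by citing the classical fact, stated in a footnote and requiring precisely the dichotomy hypothesis ($\rho(c)$ irrational, or $g_c$ conjugate to a rotation with $\rho(c)\in[0,1)$), that $\mu_c\big([x,g_c(x))\big)=\rho(c)$ for every $x$; this gives the constant in one line. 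You instead iterate $H\circ\tilde g_c=H+\alpha$ to get $H\big(\tilde g_c^{\,n}(x)\big)=H(x)+n\alpha$, use that $H-\mathrm{id}$ is $\Z$-periodic hence bounded, and recover $\alpha=\rho(c)$ by dividing by $n$ and passing to the limit in the definition of the rotation number. Your closing step is thus more self-contained (it effectively re-derives the footnote fact rather than invoking it), and — as you note yourself — it never actually uses the dichotomy hypothesis: in your argument that hypothesis only serves to make a non-atomic invariant measure the natural object to consider (and, when $\rho(c)$ is irrational, the unique one), not to make the semi-conjugacy identity go through.
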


\begin{proof}
Let $\tilde \mu_c$ be the pull back  measure of $\mu_c$  to $\R$ and let $\tilde g_c:\R\rightarrow \R$ be a lift of $g_c$ to $\R$. Then we have for every $\Theta\in [0, 1]$ lift of $\theta\in \T$:
$$\tilde \mu_c([0, \Theta])=\tilde \mu_c([\tilde g_c(0), \tilde g_c(\Theta)])=\tilde \mu_c\big(\big[ \lfloor \tilde g_c(0)\rfloor, \tilde g_c(\Theta)\big]\big)-\tilde \mu_c\big(\big[\lfloor \tilde g_c(0)\rfloor, \tilde g_c(0)\big]\big);$$
where $\lfloor\tilde g_c(0)\rfloor$ is the integer part of $\tilde g_c(0)$. This implies\footnote{ { Recall that if $f : \T \to \T$ is an orientation preserving homeomorphism then either $\rho(f)$ is irrational, $f$ is semi-conjugated (by $h$) to the rotation $R_{\rho(f)}$  and the only invariant measure is the pull back of the Lebesgue measure by $h$; or $\rho(f)$ is rational and the invariant measures are supported on periodic orbits.  When $\rho(f)$ is irrational or when $f$ is $C^0$ conjugate to a rotation and $\rho(f)\in [0, 1[$, then for any invariant measure $\mu$ and $x\in \T$, $\mu([x,f(x)[) = \rho(f)$. }  }
$$h_c(\theta)=h_c\big(g_c(\theta)\big)-\tilde\mu_c\big(\big[0, g_c(0)\big]\big)=h_c\big(g_c(\theta)\big)-\rho(c).$$
Moreover, as we assumed that $\mu_c$ is non-atomic, $h_c$ is continuous.
\end{proof}

\begin{remks}
\begin{enumerate}
\item In the other sense, if $h_c$ is a  (non-decreasing) semi-conjugacy such that $h_c\circ g_c=h_c+\rho(c)$, then $\mu([0, \theta])=h_c(\theta)-h_c(0)$ defines a $g_c$-invariant Borel probability measure;
\item When $\rho(c)$ is irrational, it is well known that the Borel invariant probability measure $\mu_c$ is unique and that the semi-conjugacy $h_c$ is unique up to constant.  
\end{enumerate}
\end{remks}
\begin{nota}
When $\rho(c)$ is irrational, we will denote by $h_c$ the semi-conjugacy such that $h_c(0)=0$.
\end{nota}


Before entering the core of the proof, let us mention a useful fact about iterates of $C^0$-integrable symplectic twist diffeomorphisms:
\begin{proposition}\label{iterateTwist}
Let $f: \A \to \A$ be a $C^0$-integrable $C^1$  symplectic twist diffeomorphism, then so is $f^n$ for all $n>0$.
\end{proposition}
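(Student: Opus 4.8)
\textbf{Proof plan for Proposition \ref{iterateTwist}.}

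The plan is to verify the three defining properties of a symplectic twist diffeomorphism for $f^n$, together with the $C^0$-integrability, noting first that the hard part is entirely the twist property. Indeed, if $f$ is a $C^1$ diffeomorphism isotopic to the identity and symplectic, then $f^n$ is automatically $C^1$, isotopic to the identity (compose the isotopies), and symplectic (since $(f^n)^*\omega = \omega$ by functoriality). Likewise $C^0$-integrability is immediate: if $\cF$ is a continuous invariant foliation into graphs for $f$, then every leaf is invariant by $f$, hence invariant by $f^n$, so the same $\cF$ witnesses the $C^0$-integrability of $f^n$. So everything reduces to showing that some (equivalently, any) lift $F^n$ of $f^n$ has the property that $r\mapsto \pi_1\big(F^n(\tilde\theta, r)\big)$ is an increasing $C^1$ diffeomorphism of $\R$ for each fixed $\tilde\theta$.

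The key point is that the twist property is \emph{not} stable under composition for general twist maps, so we genuinely need the $C^0$-integrability hypothesis. Here is the mechanism I would use. Write $F=(F_1,F_2)$ for the fixed lift and let $\tilde\eta_c$ be the lifts of the leaves $\eta_c$; recall from the discussion above that $\rho$ is an increasing homeomorphism of $\R$, so the leaves foliate $\R^2$ and are totally ordered. Fix $\tilde\theta_0\in\R$ and consider the vertical line $V=\{\tilde\theta_0\}\times\R$. Each point $(\tilde\theta_0,r)$ lies on a unique leaf $\tilde\eta_{c(r)}$, and $r\mapsto c(r)$ is an increasing homeomorphism of $\R$ (this is exactly the statement that the foliation is a foliation into graphs, reparametrized along the vertical $V$). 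Now $F$ preserves each leaf, so $F(\tilde\theta_0,r)$ is the point of the leaf $\tilde\eta_{c(r)}$ whose first coordinate is $\tilde g_{c(r)}(\tilde\theta_0)$, where $\tilde g_c$ is the lift of the projected circle dynamics $g_c$. Iterating, $F^n(\tilde\theta_0,r)$ is the point of $\tilde\eta_{c(r)}$ with first coordinate $\tilde g_{c(r)}^{\,n}(\tilde\theta_0)$. Hence
\[
\pi_1\big(F^n(\tilde\theta_0,r)\big)=\tilde g_{c(r)}^{\,n}(\tilde\theta_0).
\]
The plan is then to show the right-hand side is strictly increasing and a $C^1$ diffeomorphism in $r$. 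Strict monotonicity in $r$ will follow from strict monotonicity in $c$ of $c\mapsto \tilde g_c^{\,n}(\tilde\theta_0)$, since $r\mapsto c(r)$ is an increasing homeomorphism; and monotonicity in $c$ is where the twist property of $f$ itself enters: because $F$ twists, $c\mapsto \tilde g_c(\tilde\theta_0)=F_1(\tilde\theta_0,\tilde\eta_c(\tilde\theta_0))$ is strictly increasing (as $c\mapsto\tilde\eta_c(\tilde\theta_0)$ increases and $F_1$ is increasing in the second variable), and then one checks by induction that the composition of order-preserving maps keeps $c\mapsto \tilde g_c^{\,n}(\tilde\theta_0)$ increasing — here one uses that each $\tilde g_c$ is an increasing circle map and that the leaves stay ordered. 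Surjectivity onto $\R$ comes from the fact that $\rho$ is a homeomorphism of $\R$ together with the twist estimate forcing $\pi_1\circ F$ to be unbounded above and below along $V$ (standard for twist maps), so $\pi_1\circ F^n$ inherits this; combined with continuity and injectivity this gives a homeomorphism of $\R$.

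The genuinely delicate step is the $C^1$ regularity and the fact that the derivative in $r$ is everywhere positive (not merely nonnegative), i.e. that $r\mapsto\pi_1(F^n(\tilde\theta_0,r))$ is a $C^1$ \emph{diffeomorphism}. For this I would argue directly on the tangent map rather than through the (merely Lipschitz) leaves: $f^n$ is $C^1$, so $\pi_1\circ F^n$ is $C^1$ in $(\tilde\theta,r)$, and one must show $\partial_r(\pi_1\circ F^n)(\tilde\theta_0,r)>0$. The clean way is to decompose $D(F^n)=DF_{F^{n-1}}\cdots DF$ and track the image of the vertical direction: writing $e=(0,1)$, the twist property says $DF\cdot e$ has strictly positive first component at every point, and one shows inductively that $D(F^k)\cdot e$ has strictly positive first component, using that the ``slope'' of $D(F^k)\cdot e$ stays pinned between the slopes of the leaves (which are Lipschitz graphs, hence have bounded slope) — this is precisely the Green-bundle type monotonicity for twist maps recalled in Appendix \ref{ssGreenb}. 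Concretely: if a vector has slope below the incoming leaf slope and $DF$ pushes the vertical above all leaf slopes, the twist inequality propagates positivity of the first component. I expect this monotonicity-of-cones argument to be the main obstacle to write carefully; once it is in place, $\partial_r(\pi_1\circ F^n)(\tilde\theta_0,r)>0$ for all $r$, so by the inverse function theorem $r\mapsto\pi_1(F^n(\tilde\theta_0,r))$ is a $C^1$ diffeomorphism onto its image, which by the surjectivity above is all of $\R$. This establishes the twist property for $F^n$, and hence $f^n$ is a $C^0$-integrable $C^1$ symplectic twist diffeomorphism.
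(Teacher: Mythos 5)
Your monotonicity argument is essentially the paper's: both proceed by induction on $n$, both reduce to the identity $\pi_1\circ F^n\big(\tilde\theta,\eta_c(\tilde\theta)\big)=\tilde g_c^{\,n}(\tilde\theta)$, and both establish strict monotonicity in $c$ via the same two-step comparison — use the inductive hypothesis that $\tilde g_{c_1}^{\,n}(t)\leq \tilde g_{c_2}^{\,n}(t)$ together with monotonicity of $\tilde g_{c_2}$, then use the base twist property of $F$ to get the strict inequality $\tilde g_{c_1}(s)<\tilde g_{c_2}(s)$. Your surjectivity argument is likewise the same in spirit (the paper sends $c\to\pm\infty$ using the same comparison inequality the other way around). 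So on the core step the two proofs coincide.

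Where you go beyond the paper is in flagging the distinction between a strictly increasing $C^1$ bijection of $\R$ and a $C^1$ \emph{diffeomorphism}. The paper's induction only produces strict monotonicity and surjectivity of $r\mapsto F_1^n(\tilde\theta,r)$, and then asserts the conclusion; strictly speaking this does not yet rule out a point where $\partial_r F_1^n=0$ (the function $x\mapsto x^3$ being the cautionary example). Your proposed repair via the tangent map is the right one and is consistent with the machinery the paper already sets up: since $f$ is $C^0$-integrable, every orbit lies on an invariant Lipschitz graph and is therefore minimizing (Proposition \ref{PGreensand}), so the slopes $s_k$ from Appendix \ref{ssGreenb} satisfy $s_{n+1}(x)<s_n(x)$ with $s_1$ finite; equivalently $DF^n$ maps the vertical $V$ to a line of finite slope with first component of the same sign as at time $1$, which is strictly positive by the twist property of $F$. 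This gives $\partial_r F_1^n(\tilde\theta,r)>0$ everywhere. You present this as a sketch and call it the ``main obstacle''; it is in fact short once one invokes the appendix, but it does need to be said, and the paper's proof does not say it. Net: same route on the easy part, plus a genuine and correct observation about a gap the paper leaves implicit — though you should close the sketch rather than leave it as a plan.
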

 This is specific to the integrable case: { in general, an iterated  twist diffeomorphism  is not a  twist diffeomorphism  as can be seen in the neighborhood of an elliptic fixed point.}
\begin{proof}
We argue by induction on $n>0$. The initialization being trivial, let us assume the result true for some $k>0$. Let $F : \R^2 \to \R^2$ be a lift of $f$. For any $c\in \R$ using the notations given at the beginning of section \ref{secfirst}, we have 
$$\forall \theta \in \T, \forall m>0 , \quad f^m\big(\theta, \eta_c(\theta)\big) = \big(g_c^m(\theta), \eta_c\circ g_c^m(\theta)\big).$$
 Observe that if $f^m$ satisfies the twist condition and $c_1<c_2$ are two real numbers, then we have
$$\tilde g^m_{c_1}(t)=\pi_1\circ F^m\big(t, \eta_{c_1}(t)\big)<\pi_1\circ F^m\big(t, \eta_{c_2}(t)\big)=\tilde g^m_{c_2}(t)$$
and $\displaystyle{\lim_{t\rightarrow\pm\infty}\tilde g_{c_1}(t)=\pm\infty}$.

 Let $c_1<c_2$ and $t\in \R$. Denoting with $\sim$ the lifts of the considered functions we obtain that 
$$ \pi_1\big(F^{n+1}(t,c_2)\big) -\pi_1\big(F^{n+1}(t,c_1)\big) = \tilde g_{c_2}\circ \tilde g^{n}_{c_2}(t) -\tilde g_{c_1}\circ  \tilde g^{n}_{c_1}(t) \geq  \tilde g_{c_2}\circ \tilde g^{n}_{c_1}(t) -\tilde g_{c_1}\circ \tilde g^{n}_{c_1}(t) ,$$
where we have used the induction hypothesis and the fact that $\tilde g_{c_2}$ is increasing. It follows that $c\mapsto  \pi_1\big(F^{n+1}(t,c)\big) $ is an increasing diffeomorphism on its image.  Observe also that this inequality implies that $\displaystyle{\lim_{c_2\rightarrow+\infty} \pi_1\big(F^{n+1}(t,c_2)\big) =+\infty}$ because $\displaystyle{\lim_{c_2\rightarrow +\infty}\tilde g_{c_2}(s) =+\infty}$. Moreover 
$$ \pi_1\big(F^{n+1}(t,c_2)\big) -\pi_1\big(F^{n+1}(t,c_1)\big) = \tilde g_{c_2}\circ \tilde g^{n}_{c_2}(t) -\tilde g_{c_1}\circ  \tilde g^{n}_{c_1}(t) \leq  \tilde g_{c_2}\circ \tilde g^{n}_{c_2}(t) -\tilde g_{c_1}\circ \tilde g^{n}_{c_2}(t) ,$$
implies that $\displaystyle{\lim_{c_1\rightarrow-\infty} \pi_1\big(F^{n+1}(t,c_1)\big) =-\infty}$ because $\displaystyle{\lim_{c_1\rightarrow -\infty}\tilde g_{c_1}(s) =-\infty}$. So finally $c\mapsto  \pi_1\big(F^{n+1}(t,c)\big) $ is an increasing diffeomorphism onto $\R$.

\end{proof}

\subsection{Differentiability and conjugacy along the rational curves}\label{ssrat}
It is proved in \cite{Arna1} that for every $r=\frac{p}{q}\in\Q$, $\eta_c=\eta_{\rho^{-1}(r)}$ is   $C^k$   and the restriction of $f$ to the graph $\Gamma_c$ of $\eta_c$ is completely periodic: $f^q_{|\Gamma_c}={\rm Id}_{\Gamma_c}$. Moreover, along these particular curves, the two Green bundles (see Appendix  \ref{ssGreenb} for definition and results) are equal:
$$G_-\big(\theta, \eta_c(\theta)\big)=G_+\big(\theta, \eta_c(\theta)\big).$$
  \begin{theorem}\label{Tdiffrat}
  \begin{itemize}
\item Along every leaf $\Gamma_c$ such that $\rho(c)\in\Q$, the derivative $\frac{\partial \eta_c(\theta)}{\partial c}=1+\frac{\partial^2u}{\partial c\partial \theta}>0$ exists and  $C^{k-1}$  depends on $\theta$;
\item for any $c$ such that $\rho(c)$ is rational, the measure $\mu_c$ on $\T$ with density $\frac{\partial \eta_c}{\partial c}$ is a Borel probability measure invariant by $g_c$ and  for $\theta\in [0, 1]$, the equality
$$h_c(\theta)=\mu_c([0, \theta])=\int_0^\theta  \frac{\partial \eta_c}{\partial c}(t)dt$$ 
defines a conjugacy between $g_c$ and the rotation with angle $\rho(c)$;
\item then the map $c\in\R\mapsto \mu_c$ is continuous and also $c\in\R\mapsto h_c$ for the uniform $C^0$ topology. Thus $(\theta, c)\mapsto h_c(\theta)$ is continuous.
\end{itemize}
\end{theorem}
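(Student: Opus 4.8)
The plan is to prove Theorem~\ref{Tdiffrat} in three stages, matching its three bullet points, relying heavily on the results quoted from \cite{Arna1} and the Green bundle formalism of Appendix~\ref{ssGreenb}.

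\medskip\noindent\textbf{Step 1: existence and regularity of $\frac{\partial\eta_c}{\partial c}$ along rational leaves.} Fix $c_0$ with $\rho(c_0)=p/q\in\Q$. Since $f^q$ restricted to $\Gamma_{c_0}$ is the identity and (by \cite{Arna1}) $G_-=G_+$ along $\Gamma_{c_0}$, the leaf is a "non-degenerate" invariant curve in the sense that the tangent line to $\Gamma_{c_0}$ coincides with the common Green bundle. I would first recall that for a symplectic twist map the Green bundles $G_\pm$ are the limits of the images $Df^{\mp n}(V)$ of the vertical $V$, that they are graphs $r\mapsto (r, s_\pm(\theta) r)$ over the horizontal with $s_-\le s_+$, and that $\eta_{c_0}$ being a Lipschitz invariant graph forces its (a.e.) derivative to lie between $s_-$ and $s_+$; equality of the Green bundles then pins down $\eta_{c_0}'$ everywhere and gives it $C^{k-1}$ dependence on $\theta$ (the Green bundles are as smooth as $Df$, i.e.\ $C^{k-1}$). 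For the derivative in $c$: I would differentiate the defining relation "the graph of $\eta_c$ is $f$-invariant" — equivalently work with the generating function $S$ of $f$ and the $C^1$ generating function $u$ of the foliation from Theorem~\ref{Tgeneder}. The transverse Jacobi/variational equation along the periodic orbit has, by the $G_-=G_+$ condition, a one-dimensional space of bounded solutions, and $\partial_c(\theta,\eta_c(\theta))$ at $c=c_0$ is precisely the corresponding Jacobi field; this both shows the partial derivative exists, that it is $C^{k-1}$ in $\theta$, and that it is strictly positive (a bounded Jacobi field along a curve where the Green bundles coincide cannot vanish, by the twist/positivity of the generating function's mixed second derivative). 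So $\frac{\partial\eta_c}{\partial c}=1+\frac{\partial^2 u}{\partial c\,\partial\theta}>0$ along $\Gamma_{c_0}$.

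\medskip\noindent\textbf{Step 2: $\mu_c$ is invariant and $h_c$ conjugates $g_c$ to $R_{\rho(c)}$.} Write $\nu_c=\frac{\partial\eta_c}{\partial c}(\theta)\,d\theta$. Invariance of the measure under $g_c$ should follow from a change-of-variables computation: differentiating the conjugacy-type identity $f^m(\theta,\eta_c(\theta))=(g_c^m(\theta),\eta_c(g_c^m(\theta)))$ with respect to $c$, or more directly differentiating $g_c=\pi_1\circ f(\cdot,\eta_c(\cdot))$ and using that $f$ is area-preserving, yields $g_c^*\big(\tfrac{\partial\eta_c}{\partial c}\,d\theta\big)=\tfrac{\partial\eta_c}{\partial c}\,d\theta$ — this is the standard fact that the "derivative of the family of invariant curves" is an invariant density for an exact twist map. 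Normalizing (the total mass is $\int_0^1(1+\partial^2_{c\theta}u)\,d\theta=1$ since $u(\cdot,c)$ is $1$-periodic, hence $\int_0^1\partial_\theta(\partial_c u)=0$) we get a $g_c$-invariant Borel probability measure $\mu_c$ with positive continuous density, hence non-atomic. Now apply Proposition~\ref{Pconjmeas}: since $g_c$ is $C^0$ conjugate to a rotation (rational rotation number, but the curve is a smooth circle on which $f^q=\mathrm{Id}$, so $g_c$ is periodic, hence conjugate to the rational rotation) — here I must be slightly careful that Proposition~\ref{Pconjmeas} is stated exactly for the "rational but conjugate to a rotation" case, which is ours — $h_c(\theta)=\mu_c([0,\theta])$ is a semi-conjugacy $h_c\circ g_c=R_{\rho(c)}\circ h_c$; and because the density is continuous and strictly positive, $h_c$ is a homeomorphism, so it is an actual conjugacy.

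\medskip\noindent\textbf{Step 3: continuity of $c\mapsto\mu_c$ and $c\mapsto h_c$.} This is the delicate point and I expect it to be the main obstacle, because the density $\frac{\partial\eta_c}{\partial c}$ has only been produced at rational-rotation-number parameters, where it is defined by a Green-bundle/Jacobi-field construction rather than by genuine differentiability of $c\mapsto\eta_c$ on a neighbourhood. The strategy is to show that for a sequence $c_n\to c$ of rational-rotation-number parameters, the measures $\mu_{c_n}$ converge weakly to $\mu_c$, and conclude uniform convergence of the distribution functions $h_{c_n}\to h_c$ by Dini/P\'olya-type arguments (weak convergence of probability measures plus continuity of the limit distribution function upgrades to uniform convergence). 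For the weak convergence I would use: (a) any weak limit $\mu_\infty$ of $\mu_{c_n}$ is $g_c$-invariant, since $g_{c_n}\to g_c$ uniformly (the foliation and the restricted dynamics depend continuously on $c$, $\eta_{c_n}\to\eta_c$ uniformly); (b) $g_c$ is uniquely ergodic when $\rho(c)$ is irrational, forcing $\mu_\infty=\mu_c$, so continuity on the (full-measure) set of irrational parameters is automatic — the real content is at rational $c$, where uniqueness of the invariant measure fails, and one must show $\mu_{c_n}$ cannot escape to a measure supported on a proper subset of the periodic orbits. Here I would invoke the Green-bundle description: the density $\frac{\partial\eta_{c_n}}{\partial c}$ is, up to normalization, $1+s(\theta,c_n)$ where $s$ is the (common) Green bundle slope, and the Green bundles are semi-continuous in a controlled way; combined with the fact from \cite{Arna1} that along rational leaves $G_-=G_+$, one gets that $1+s(\cdot,c_n)$ converges to a positive continuous function which must integrate against $g_c$-invariance to give the unique density $\frac{\partial\eta_c}{\partial c}$. (Alternatively, one can first prove the monotone family $c\mapsto\eta_c$ has one-sided derivatives in $c$ everywhere and that these coincide at rational-rotation parameters with the Green-bundle density, then continuity of $c\mapsto\mu_c$ follows from monotone convergence of $\eta_c$.) Once $c\mapsto\mu_c$ is weakly continuous and each $\mu_c$ is non-atomic with continuous distribution function, $(\theta,c)\mapsto h_c(\theta)$ is separately continuous in $\theta$ and continuous in $c$ uniformly in $\theta$, hence jointly continuous. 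The hardest part, to repeat, is controlling the invariant measures $\mu_{c_n}$ at a limit parameter $c$ with \emph{rational} rotation number — this is exactly the phenomenon the introduction flags as "more subtle", and the Green-bundle equality along rational leaves from \cite{Arna1} is the tool that makes it work.
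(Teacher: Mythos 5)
Your proposal has genuine gaps at the critical steps, and in particular Step~1 is circular where the paper's argument is not.

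The core difficulty of the first bullet is \emph{existence} of $\frac{\partial\eta_c}{\partial c}$ at a rational parameter $c_0$, when the foliation is a priori only $C^0$. You propose to identify $\partial_c\big(\theta,\eta_c(\theta)\big)$ with a bounded Jacobi field solving the transverse variational equation along the periodic orbit, and to deduce existence and positivity from this. But this only works once you already know the limit of difference quotients $\frac{\eta_{c}(\theta)-\eta_{c_0}(\theta)}{c-c_0}$ exists: the variational equation tells you what a derivative must satisfy if it exists, not that the quotients converge. For a merely $C^0$ family of invariant graphs, convergence of those quotients is exactly the non-trivial point, and it is not supplied by the bounded-Jacobi-field count. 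The paper instead proves existence directly by an area/ergodic-decomposition argument: after normalizing so that $\Gamma_{c_0}=\T\times\{0\}$ and $f^q=\mathrm{Id}$ there, the Lebesgue area of the small ``triangle'' between $\Gamma_{c}$, a vertical $\cv_\theta$, and $f(\cv_\theta)$ equals $\tfrac12\eta_c(\theta)^2\big(s(\theta)+o(1)\big)$, and ergodic decomposition of $f_*\lambda=\lambda$ shows this area is independent of $\theta$. This yields the explicit asymptotic $\eta_c(\theta)=c\big(\int_\T s^{-1/2}\big)^{-1}\big(s(\theta)^{-1/2}+o(1)\big)$ and hence existence, positivity, and $C^{k-1}$ $\theta$-dependence of $\partial\eta_c/\partial c$ in one stroke. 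Note that $s(\theta)$ here is the \emph{torsion} (the $\partial F_1/\partial r$ entry of $Df^q$ along the leaf), not the Green-bundle slope.

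This last point also undermines your Step~3. You claim the density $\frac{\partial\eta_{c_n}}{\partial c}$ is ``up to normalization $1+s(\theta,c_n)$ where $s$ is the common Green-bundle slope''; that formula is wrong. Along the straightened rational leaf the Green-bundle slope is $0$ (the leaf is flat), which would give the Lebesgue density, whereas the correct density is $\propto s(\theta)^{-1/2}$ with $s$ the torsion. The paper's proof of continuity of $c\mapsto\mu_c$ at rational parameters is not a soft semi-continuity argument: it is a quantitative count. Setting $M_c(\theta)$ to be the number of iterates $\tilde g_c^j(0)$ in $[0,\theta]$ and $N_c=\lfloor 1/\rho(c)\rfloor$, one uses the asymptotic for $\eta_c$ to estimate the small areas $\int_{\tau}^{\tilde g_c(\tau)}\eta_c$ and derive $M_c(\theta)=\big\lfloor \tfrac1c\big((\int_\T s^{-1/2})(\int_0^\theta s^{-1/2})+o(1)\big)\big\rfloor$, hence $\mu_c([0,\theta])=M_c(\theta)/N_c+o(1)\to\mu_0([0,\theta])$; the Dini--P\'olya upgrade you mention is then applied, but only \emph{after} this pointwise convergence is established. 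Your alternative suggestion (one-sided $c$-derivatives exist everywhere) is also not something the hypotheses give you. So while your scaffolding (use $G_-=G_+$ on rational leaves, unique ergodicity on irrational ones, Dini--P\'olya to upgrade) is aimed in the right direction, the two load-bearing steps — existence of the $c$-derivative and convergence of $\mu_{c_n}\to\mu_{c_0}$ at a rational $c_0$ — are not actually proved, and the formula underlying your Step~3 is incorrect.

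Your Step~2 is closer in spirit: the paper does construct $\mu_{c}$ as a limit of normalized Lebesgue measures $\Lambda_{c,c'}$ between consecutive leaves, rather than by your ``differentiate the invariance identity'' computation (which again presupposes $c$-differentiability of $g_c$). Once Step~1 is in place, either route gives the invariant density; the $\Lambda_{c,c'}$ route has the advantage of only using the already-established asymptotic for $\eta_c$, and reappears verbatim in the irrational case of Theorem~\ref{Tgeneder}.
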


\begin{remks}
\begin{enumerate}
\item Observe that because $c\mapsto \eta_c$ is increasing, we know that for Lebesgue almost every $(\theta, c)\in \A$, the derivative $\frac{\partial \eta_c(\theta)}{\partial c}$ exists (see \cite{KFT}). But our theorem says something different.
\item Because of the continuous dependence on $\theta$  along the rational curve,  we obtain that $\frac{\partial \eta_c(\theta)}{\partial c}$ restricted to every rational curve is bounded (that is clear when we assume that the foliation is Lipschitz but not if the foliation is just continuous).
\end{enumerate}
\end{remks}

\noindent{\em Proof of the first point.}
  We fix $A\in \R$ such that $\rho(A)=\frac{p}{q}\in \Q$. Replacing $f$ by $f^q$, we can assume that $\rho(A)\in\Z$. Observe that because of the $C^0$-integrability of $f$, $f^q$ is also a ($C^0$-integrable with the same invariant foliation) $C^k$  symplectic twist diffeomorphism  (Proposition \ref{iterateTwist}).  

We define $G_A: \A\rightarrow \A$ by 
\begin{equation}\label{Ecurvezero} G_A(\theta, r)=\big(\theta, r+\eta_A(\theta)\big).\end{equation}
Then $G_A^{-1}\circ f^q\circ G_A$ is also a $C^0$-integrable $C^k$  symplectic twist diffeomorphism   and $\T\times\{ 0\}$ is filled with fixed points.

We finally have to prove our theorem in this case and we use the notation $f$ instead of    $G_A^{-1}\circ f^q\circ G_A$. We can assume that $A=0$  instead of $A\in\Z$.

Because of the semi-continuity of the two Green bundles $G_-=\R(1, s_-)$ and $G_+=\R(1, s_+)$, we have that for any point $x=(\theta, r)$ sufficiently close to $\T\times \{ 0\}$: $\max\{ |s_-(x)|, |s_+(x)|\} <\varepsilon$ is small.

Now we fix $c$ small and consider for every $\theta\in\T$ the small triangular domain $\ct(\theta)$ that is delimited by the three following  curves

\begin{itemize}
\item the graph of $\eta_c$;
\item the vertical $\cv_\theta=\{\theta\}\times \R$;
\item the image $f(\cv_\theta)$ of the vertical at $\theta$.
\end{itemize}
\begin{center}
\includegraphics[width=5cm]{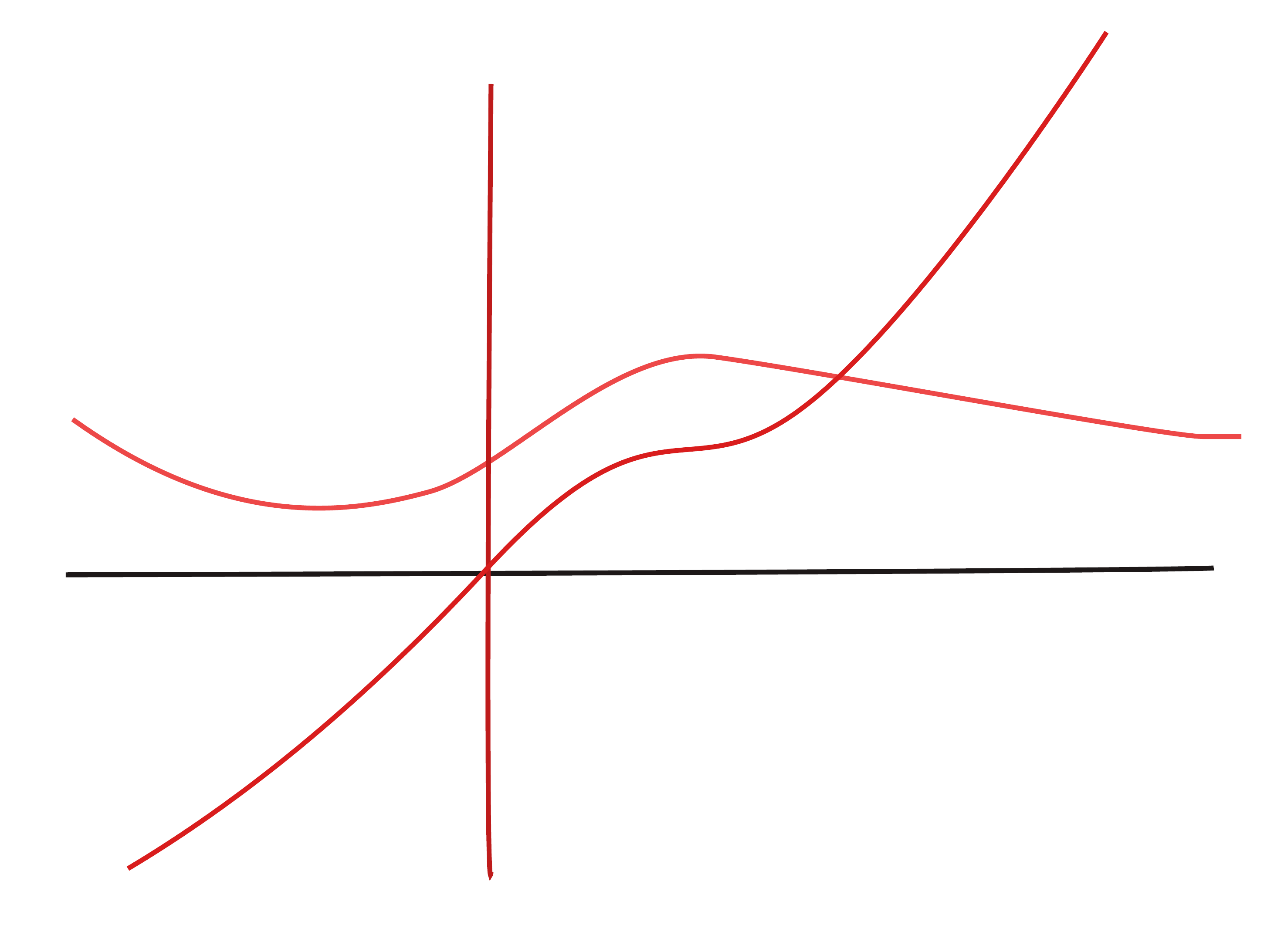}
\end{center}
 To be more precise, $\ct(\theta)$ is `semi-open' in the following sense; it contains  its whole boundary except the image $f(\cv_\theta)$ of the vertical at $\theta$.\\
We assume that $c>0$. The case $c<0$ is similar.\\
As the slope of $\eta_c$ is almost $0$ (because between the slope of the two Green bundles,  see Proposition \ref{PGreensand}) and the slope of the side of the triangle that is in $f(\cv_\theta)$ is almost $\frac{1}{s(\theta)}$ where $s(\theta)>0$ is the torsion that is defined by
\begin{equation}\label{Ematf} Df(\theta, 0)=\begin{pmatrix} 1&s(\theta)\\
0& 1\end{pmatrix},\end{equation}
the area of this triangle is 
\begin{equation}\label{E1}\lambda\big(\ct(\theta)\big)=\frac{1}{2}\big( \eta_c(\theta)\big)^2\big( s(\theta)+\varepsilon (\theta, c)\big);\end{equation}
where 
$${\rm uniformly}\quad{\rm for}\quad \theta \in \T,\quad  \lim_{c\rightarrow 0}\varepsilon (\theta, c)=0.$$
Let $\lambda$ be the Lebesgue measure restricted to the  invariant sub-annulus 
$$\ca_c=\bigcup_{\theta\in\T} \{ \theta\}\times [0, \eta_c(\theta)].$$
Being symplectic,   $f$ preserves $\lambda$. Moreover, every ergodic measure $\mu$ for $f$ with support in $\ca_c$ is supported on one curve $\Gamma_A$ with $A\in [0, c]$. But $f_{|\Gamma_A}$ is semi-conjugated to a rotation with an  angle  $\rho(A)$ that is in $[0, 1[$. Hence every interval in $\Gamma_A$ that is between some $\big(\theta, \eta_A(\theta)\big)$ and $f\big(\theta, \eta_A(\theta)\big)$ has the same $\mu$-measure, which is just given by the rotation number $\rho(A)\in [0, 1]$  on the graph of $\eta_A$. This implies that $\theta\mapsto \mu\big(\ct(\theta)\big)$ is constant. Hence for every $\theta, \theta'\in \T$ and for every ergodic measure $\mu$ with  support in $\ca_c$, we have $\mu\big(\ct(\theta)\big)=\mu\big(\ct(\theta')\big)$.\\
 We now use  the ergodic decomposition of invariant measures (see e.g. \cite{Man1}). Let $\delta_x$ be the notation for the Dirac measure at $x$. For every $a\in \A$, we denote   $\displaystyle{\lim_{n\rightarrow \infty}\frac{1}{n}\sum_{k=1}^{n-1}\delta_{f^n(a)}}$  by $\lambda_a$. Then we have  $\lambda=\int\lambda_ad\lambda(a)$. Denoting the rotation number of a point $a\in\A$ by  $\cR(a)$, we deduce that:
\begin{equation}\label{Airetriangle}\forall \theta, \theta'\in \T,\quad  \lambda\big(\ct(\theta)\big)=\lambda\big(\ct(\theta')\big)=\int_{\ca_c} \cR(a)d\lambda(a).\end{equation}

We deduce from equation (\ref{E1})  that 
$${\rm uniformly}\ \ {\rm for}\quad \theta, \theta'\in\T,\quad  \lim_{c\rightarrow 0} \frac{\eta_c(\theta')}{\eta_c(\theta)}=\sqrt{\frac{s(\theta)}{s(\theta')}}.$$
 Integrating with respect to $\theta'$, we deduce that uniformly in $\theta$, we have
$$ \lim_{c\rightarrow 0} \frac{c}{\eta_c(\theta)}=\sqrt{s(\theta)}\int_\T \frac{dt}{\sqrt{s(t)}}.$$

This implies that 
\begin{equation}\label{neq 0}
\frac{\partial \eta_c(\theta)}{\partial c}_{|c=0} =\Big( \int_\T \frac{dt}{\sqrt{s(t)}} \Big)^{-1}\frac{1}{\sqrt{s(\theta)}};
\end{equation}
and even
\begin{equation}\label{Eeta} \eta_c(\theta)=c\Big( \int_\T \frac{dt}{\sqrt{s(t)}} \Big)^{-1}\bigg(\frac{1}{\sqrt{s(\theta)}}+\varepsilon(\theta,c)\bigg)
\end{equation}
where
$${\rm uniformly}\ \ {\rm for}\quad \theta \in \T,\quad \lim_{c\rightarrow 0}\varepsilon (\theta, c)=0.$$

Observe that $ \frac{\partial \eta_c}{\partial c}=\Big( \int_\T \frac{dt}{\sqrt{s(t)}} \Big)^{-1}\frac{1}{\sqrt{s( \cdot)}}$ is a  $C^{k-1}$  function of $\theta'$. This proves the first point of theorem \ref{Tdiffrat}.

\noindent{\em Proof of the second point.} We deduce  from the first point that for any $c$ such that $\rho(c)$ is rational, the function  $\frac{\partial \eta_c}{\partial c} $ is continuous and positive. Moreover, its integral on $\T$ is 1. Hence $\frac{\partial \eta_c}{\partial c} $  is the density of a Borel probability measure that is equivalent to Lebesgue.  We now introduce:

\begin{nota}
If $c<c'$, we denote by $\Lambda_{c, c'}$ the normalized Lebesgue measure between the graph of $\eta_c$ and the graph of $\eta_{c'}$.
\end{nota}
Then $f$ preserves $\Lambda_{c, c'}$. Observe that   for any measurable $I\subset \T$, we have
\begin{equation}\label{E4}\Lambda_{c, c'} \big(\{ (\theta, r); \theta \in I, \  r\in [\eta_c(\theta), \eta_{c'}(\theta)]\}\big)=\frac{1}{c-c'}\int_I \big(\eta_c(\theta)-\eta_{c'}(\theta)\big)d\theta.
\end{equation}
\begin{lemma}\label{Lratmeas}
If $\rho(c)$ is rational, then $\displaystyle{\lim_{c'\rightarrow c}\Lambda_{c, c'}}$ is a measure supported on the graph of $\eta_c$ whose projected measure  $\mu_c$   has  density   $\frac{\partial \eta_c}{\partial c}$ with respect to the  Lebesgue  measure of $\T$.

Hence if $h_c(\theta)=\int_0^\theta  \frac{\partial \eta_c}{\partial c}(t)dt$, we have $$h_c\circ \pi_1\circ f\big(\theta, \eta_c(\theta)\big)=h_c(\theta)+\rho(c).$$
\end{lemma}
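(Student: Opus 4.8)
The plan is to recognise the candidate measure $\mu_c=\frac{\partial\eta_c}{\partial c}\,d\theta$ --- which, by the first point of Theorem~\ref{Tdiffrat} together with the discussion just above, is a well-defined Borel probability measure on $\T$, equivalent to Lebesgue, with continuous positive density --- as the fibered weak-$*$ limit of the measures $\Lambda_{c,c'}$ as $c'\to c$, and then to transfer the $f$-invariance of the $\Lambda_{c,c'}$ to $\mu_c$ and to $g_c$ before applying Proposition~\ref{Pconjmeas}. Concretely, I will show that $\Lambda_{c,c'}\to\nu_c$ weak-$*$, where $\nu_c=(\mathrm{id},\eta_c)_*\mu_c$ is the measure carried by the graph of $\eta_c$ and projecting by $\pi_1$ onto $\mu_c$.

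For the horizontal (projected) part: with $c<c'$, formula~(\ref{E4}) shows that $\pi_{1*}\Lambda_{c,c'}$ is absolutely continuous with respect to the Lebesgue measure of $\T$, with density $\phi_{c'}(\theta)=\frac{\eta_{c'}(\theta)-\eta_c(\theta)}{c'-c}\geq0$, and $\int_\T\phi_{c'}\,d\theta=1$ since $\int_\T\eta_a\,d\theta=a$ for every $a$ (equivalently $\int_\T\frac{\partial u}{\partial\theta}(\cdot,a)\,d\theta=0$). By the first point of Theorem~\ref{Tdiffrat}, $\phi_{c'}\to\frac{\partial\eta_c}{\partial c}$ pointwise as $c'\to c$, and the limit is again a probability density; Scheffé's lemma (nonnegative integrands, pointwise convergence, equal total masses) then gives $\phi_{c'}\to\frac{\partial\eta_c}{\partial c}$ in $L^1(\T)$, hence $\pi_{1*}\Lambda_{c,c'}\to\mu_c$. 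In the vertical direction, $\Lambda_{c,c'}$ is carried by the strip between the two graphs, whose width $\|\eta_{c'}-\eta_c\|_\infty$ tends to $0$ by continuity of the foliation; testing a bounded continuous function and using its uniform continuity on a fixed compact region, this combines with the previous step to give $\Lambda_{c,c'}\to\nu_c$ weak-$*$ (a genuine, not merely subsequential, limit). Since $f$ is area-preserving and maps the region bounded by the two disjoint invariant graphs $\mathrm{Graph}(\eta_c)$ and $\mathrm{Graph}(\eta_{c'})$ onto itself, it preserves each $\Lambda_{c,c'}$, as already observed; pushforward by the homeomorphism $f$ being weak-$*$ continuous, $f_*\nu_c=\nu_c$. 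As $\nu_c$ is carried by the $f$-invariant graph of $\eta_c$ and the chart $\theta\mapsto(\theta,\eta_c(\theta))$ conjugates $f_{|\Gamma_c}$ to $g_c$, the measure $\mu_c$ is $g_c$-invariant.

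It then remains to apply Proposition~\ref{Pconjmeas}: $\mu_c$ is non-atomic (being equivalent to Lebesgue), and $g_c$ is $C^0$-conjugate to the rotation $R_{\rho(c)}$. The latter is because, by \cite{Arna1}, when $\rho(c)=\frac pq$ one has $f^q_{|\Gamma_c}=\mathrm{Id}$, so every $g_c$-orbit is periodic of exact period $q$, and an orientation-preserving circle homeomorphism with rotation number $\frac pq$ whose orbits are all $q$-periodic is conjugate to $R_{p/q}$. Proposition~\ref{Pconjmeas} then gives $h_c\circ g_c=h_c+\rho(c)$ with $h_c(\theta)=\int_0^\theta d\mu_c=\int_0^\theta\frac{\partial\eta_c}{\partial c}(t)\,dt$, which is exactly the asserted identity $h_c\circ\pi_1\circ f\big(\theta,\eta_c(\theta)\big)=h_c(\theta)+\rho(c)$.

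The main obstacle is the weak-$*$ convergence $\Lambda_{c,c'}\to\nu_c$: one has to exclude escape of mass and make sure the fiber-normalised difference quotients $\phi_{c'}$ converge to an honest probability density --- and this is exactly where the \emph{twist} hypothesis enters, through the existence and continuity of $\frac{\partial\eta_c}{\partial c}$ along the rational leaf supplied by the first point of Theorem~\ref{Tdiffrat}. In the merely continuous category there is no a priori uniform control on the $\phi_{c'}$, which is why I would rely on Scheffé's lemma (pointwise limit plus equality of total masses) rather than on dominated convergence. A minor point to check is that $f$ really maps the open region bounded by the two leaves onto itself; this follows from the invariance and disjointness of the leaves together with connectedness.
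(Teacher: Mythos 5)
Your proof is correct and follows the same overall outline as the paper's (compute $\pi_{1*}\Lambda_{c,c'}$ via Equation~(\ref{E4}), pass to the limit to get a measure carried by the graph of $\eta_c$ with projected density $\frac{\partial\eta_c}{\partial c}$, transfer $f$-invariance, then apply Proposition~\ref{Pconjmeas}). The one genuine technical divergence is in how the limit is justified: the paper invokes the \emph{uniform} estimate~(\ref{Eeta}) (uniform in $\theta$ convergence of the difference quotients, established during the proof of the first point of Theorem~\ref{Tdiffrat}) and lets the integral pass to the limit directly, whereas you deliberately avoid uniform control and use Scheff\'e's lemma, which only needs pointwise convergence $\phi_{c'}\to\frac{\partial\eta_c}{\partial c}$ (the defining property of the derivative, hence directly readable off the \emph{statement} of Theorem~\ref{Tdiffrat}) together with conservation of total mass ($\int_\T\phi_{c'}=1=\int_\T\frac{\partial\eta_c}{\partial c}$). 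This buys a cleaner citation discipline --- you rely only on what Theorem~\ref{Tdiffrat} asserts, not on an intermediate inequality buried in its proof --- at no real cost. You also make explicit the verification of the hypotheses of Proposition~\ref{Pconjmeas} (that $\mu_c$ is non-atomic because it has continuous positive density, and that $g_c$ is $C^0$-conjugate to $R_{\rho(c)}$ because $f^q_{|\Gamma_c}=\mathrm{Id}$ by \cite{Arna1}), a step the paper leaves implicit but that is genuinely needed since $\rho(c)$ is rational here.
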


\begin{proof}
 Using  Equation (\ref{Eeta}),  we can take the limit in Equation (\ref{E4}) or more precisely for any $\psi\in C^0(\A, \R)$ in 
$$\int \psi(\theta, r)d\Lambda_{c, c'}(\theta, r)=\int_\T\frac{1}{c-c'}\bigg( \int_{\eta_{c'}(\theta)}^{\eta_c(\theta)}\psi(\theta, r)dr\bigg) d\theta$$
 and obtain that the limit is an invariant measure supported in the graph of $\eta_c$ whose projected measure $\mu_c$ has a density with respect to Lebesgue that is equal to
$\frac{\partial \eta_c}{\partial c}$. We then use Proposition \ref{Pconjmeas} to conclude that $h_c$ is the wanted conjugacy.
 \end{proof}
\noindent   {\em Proof of the third point.} 
We noticed that when $\rho(c)$ is irrational, there is only one invariant Borel probability measure that is supported on the graph of $\eta_c$. This implies the continuity of the map $c\mapsto \mu_c$ at such a $c$. Let us look at what happens when $\rho(c)$ is rational.
 
\begin{proposition}
For every $c_0\in \R$ such that $\rho(c_0)$ is rational, for every $\theta\in [0, 1]$, we have
$$\lim_{c\rightarrow c_0} \mu_c([0, \theta])=\mu_{c_0}([0, \theta])$$
and the limit is uniform in $\theta$.
\end{proposition}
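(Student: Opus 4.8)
The plan is to reduce to the normalized situation of the proof of Theorem~\ref{Tdiffrat} and then to show that $(\mu_c)_c$ converges weakly-$*$ to $\mu_{c_0}$ by identifying the $c\to 0$ limit of the dynamics as a vector field whose invariant measure is exactly $\mu_{c_0}$.

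First I would perform the reductions used in the proof of the first point of Theorem~\ref{Tdiffrat}: replacing $f$ by $f^q$ (still a $C^0$-integrable symplectic twist diffeomorphism by Proposition~\ref{iterateTwist}) and conjugating by $G_{c_0}:(\theta,r)\mapsto\big(\theta,r+\eta_{c_0}(\theta)\big)$ (a $C^k$ diffeomorphism, since $\eta_{c_0}$ is $C^k$ by \cite{Arna1}), one may assume $c_0=0$, $\rho(0)=0$, $\eta_0\equiv 0$ and $\T\times\{0\}$ pointwise fixed by $f$. This conjugation only translates the whole foliation by $-\eta_{c_0}$, hence merely relabels the parameter ($c\mapsto c-c_0$) and leaves each $\mu_c$, viewed as a Borel probability measure on $\T$, unchanged; so the statement is unaffected. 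In this normalized setting I keep the notation of that proof: the torsion $s(\theta)>0$ is defined by \eqref{Ematf}; writing $K=\big(\int_\T dt/\sqrt{s(t)}\big)^{-1}$, one has by \eqref{Eeta} that $\eta_c(\theta)=cK\big(1/\sqrt{s(\theta)}+\varepsilon(\theta,c)\big)$ with $\varepsilon(\theta,c)\to 0$ uniformly in $\theta$ as $c\to 0$; and by \eqref{neq 0} together with the definition of $\mu_0$, the measure $\mu_0$ has density $K/\sqrt{s(\cdot)}$ on $\T$. In particular $\mu_0$ is non-atomic, and $h_{c_0}(\theta)=\mu_0([0,\theta])=\int_0^\theta K\,dt/\sqrt{s(t)}$.

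Next I would establish a first-order expansion of $g_c$. Since $f$ is $C^1$, fixes every $(\theta,0)$, and has differential \eqref{Ematf} there, Taylor's formula gives $\pi_1\circ f(\theta,r)=\theta+s(\theta)r+o(r)$ uniformly in $\theta$ as $r\to 0$; substituting $r=\eta_c(\theta)$ and using the expansion of $\eta_c$ recalled above yields, uniformly in $\theta\in\T$,
\[
g_c(\theta)=\pi_1\circ f\big(\theta,\eta_c(\theta)\big)=\theta+c\,\kappa(\theta)+o(c)\qquad(c\to 0),\qquad \kappa:=K\sqrt{s(\cdot)}>0.
\]
The crux of the argument — and the step I expect to be the main obstacle, since $g_0=\mathrm{Id}_\T$ has an enormous space of invariant measures (for instance those carried by finite orbits), so a priori $\mu_c$ could accumulate on a measure very far from $\mu_0$ — is to prove that every weak-$*$ limit of $\mu_c$ as $c\to 0$ equals $\mu_0$. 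Here is how I would rule the bad scenarios out. Fix $\phi\in C^1(\T,\R)$. As $g_c\to\mathrm{Id}$ uniformly and $\phi'$ is uniformly continuous, $\phi\circ g_c(\theta)-\phi(\theta)=\big(g_c(\theta)-\theta\big)\big(\phi'(\theta)+o(1)\big)=c\,\kappa(\theta)\phi'(\theta)+o(c)$ uniformly in $\theta$. Since $\mu_c$ is $g_c$-invariant, $\int_\T(\phi\circ g_c-\phi)\,d\mu_c=0$; dividing by $c$ gives $\int_\T\kappa\,\phi'\,d\mu_c=o(1)$ as $c\to 0$. Along any sequence $c_n\to 0$ with $\mu_{c_n}\rightharpoonup\nu$ (such sequences exist by weak-$*$ compactness of the probability measures on the compact $\T$), continuity of $\kappa\phi'$ yields $\int_\T\kappa\,\phi'\,d\nu=0$ for every $\phi\in C^1(\T,\R)$. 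Thus the finite positive measure $\kappa\cdot\nu$ annihilates $\{\phi':\phi\in C^1(\T,\R)\}$, which is exactly the space of continuous mean-zero functions on $\T$; hence $\kappa\cdot\nu$ is a multiple of Lebesgue measure, necessarily a positive one since $\kappa>0$ and $\nu$ is a probability measure. Therefore $\nu$ has density proportional to $1/\kappa\propto 1/\sqrt{s(\cdot)}$, i.e. $\nu=\mu_0$. As the limit is independent of the subsequence, $\mu_c\rightharpoonup\mu_0$ as $c\to 0$.

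Finally I would conclude. Since $\mu_0$ is non-atomic, for every $\theta\in[0,1]$ the boundary $\{0,\theta\}$ of $[0,\theta]$ is $\mu_0$-negligible, so weak-$*$ convergence gives $h_c(\theta)=\mu_c([0,\theta])\to\mu_0([0,\theta])=h_{c_0}(\theta)$ pointwise on $[0,1]$. Each $h_c$ is non-decreasing and the limit $h_{c_0}$ is continuous on the compact interval $[0,1]$, so the classical Pólya argument upgrades this pointwise convergence to uniform convergence in $\theta$. Undoing the reductions (a mere translation of the parameter $c$) then finishes the proof.
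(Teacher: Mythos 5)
Your proof is correct, and it takes a genuinely different route from the paper. The paper proves the convergence directly by counting: after the same normalization, it introduces $M_c(\theta)$, the number of points of the forward orbit of $0$ under $\tilde g_c$ that fall in $[0,\theta]$, and $N_c=M_c(1)$, observes $\mu_c([0,\theta])$ is pinned between $M_c(\theta)/(N_c+1)$ and $(M_c(\theta)+1)/N_c$, and then estimates $M_c(\theta)$ asymptotically via the ``constant little area'' identity $\int_\tau^{\tilde g_c(\tau)}\eta_c \approx c^2K^2$ derived from the same expansion~\eqref{Eeta}. Your argument replaces that quantitative bookkeeping with a soft one: you linearize $g_c$ to extract the ``vector field'' $\kappa=K\sqrt{s}$, plug any $C^1$ test function into the invariance relation, divide by $c$, and pass to the limit along a weak-$*$ convergent subsequence to conclude that any accumulation point $\nu$ satisfies $\int\kappa\phi'\,d\nu=0$ for all $\phi\in C^1(\T)$, hence $\kappa\nu$ is a multiple of Lebesgue and $\nu=\mu_0$. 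This is conceptually cleaner (it identifies $\mu_0$ as the unique invariant measure of the limiting flow $\dot\theta=\kappa(\theta)$ without tracking error terms), whereas the paper's counting argument yields explicit error estimates as a byproduct. Both arguments share the essential input~\eqref{Eeta}/\eqref{neq 0} from the first point of Theorem~\ref{Tdiffrat} and both finish with the Dini--P\'olya theorem. Two small points worth making explicit in your write-up: when you replace $f$ by $f^q$, you should note that $\mu_c$, which is $g_c$-invariant, remains $g_c^q$-invariant (and, when $\rho(c)$ is irrational, is still the unique such measure since $q\rho(c)$ is also irrational); and in the step ``$\phi\circ g_c-\phi=(\phi'(\theta)+o(1))(g_c(\theta)-\theta)$'' the uniformity in $\theta$ of the $o(1)$ term should be justified by uniform continuity of $\phi'$ combined with $\|g_c-\mathrm{Id}\|_\infty\to0$, which you do indeed invoke.
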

This joint with the continuity of $h_{c_0}$ implies the continuity of $(\theta, c)\mapsto h_c(\theta)$ at $(\theta, c_0)$.\\
\begin{proof}  In this proof, we will use different functions $\varepsilon_i(\tau, c)$ and all these functions will satisfy uniformly in $\tau$
$$\lim_{c\rightarrow 0}\varepsilon_i(\tau, c)=0.$$
  As in the proof of the first point of Theorem \ref{Tdiffrat},  we can assume that $\eta_{c_0}=0$ (and then $c_0=0$)  and $\rho(0)=0$.\\
  We fix $\varepsilon>0$. Because of the continuity of  $\rho$, we can choose $\alpha$ such that if $|c|<\alpha$, then $|\rho(c)|<\varepsilon$.\\
 Let us introduce the notation $N_c=\lfloor \frac{1}{\rho(c)}\rfloor$ for $c\not=0$.    Let us assume that $c>0$ and $\theta\in (0, 1]$. 
We also denote by $  \tilde g_c$ the lift of $g_c$ such that $ \tilde g_c(0)\in[0, 1)$ and by $M_c(\theta)$
$$M_c(\theta)=\sharp \{ j\in \N;\quad  \tilde g _{c}^j(0)\in [0, \theta]\}.$$
 Hence, $M_c(\theta)$ is the number of points of the orbit of $0$ under $\tilde g_c$ that belong to $[0, \theta]$.  Observe that $M_c(\theta)$ is non-decreasing with respect to $\theta$.\\
As $\eta_c>0$, any primitive $\cN_c$ of $\eta_c$ is increasing, hence $M_c(\theta)$ is also the number of $\tilde g^k(0)$ such that $\cN_c\big(\tilde g^k(0)\big)$ belongs to $[\cN_c(0), \cN_c(\theta)]$, i.e.
\begin{equation}
\label{EestMC}
\begin{matrix} M_c(\theta)&=\sharp \Big\{ j\in  \N;\quad  \int_0^{\tilde g_{c}^j(0)}\eta_{c}(t)dt\leq  \int_0^{\theta}\eta_{c}(t)dt\Big\}\\
& =\sup\Big\{ j\in  \N;\quad  \int_0^{\tilde g_{c}^j(0)}\eta_{c}(t)dt\leq  \int_0^{\theta}\eta_{c}(t)dt\Big\}.\end{matrix}
\end{equation}
 Note that $M_c(1)=N_c$ because $g_c$ has rotation number $\rho(c)$ and that   we have $\forall \theta\in (0, 1]$, $M_c(\theta)\leq N_c$ as $M_c$ is non decreasing. We have also\\
$$\mu_c([0, \theta])=\sum_{j=0}^{M_c(\theta)-1}\mu_c([\tilde g_c^j(0), \tilde g_c^{j+1}(0)[)+\mu_c([\tilde g^{M_c(\theta)}(0), \theta])$$
and thus
$\mu_c([0, \theta])=M_c(\theta)\rho(c) +\Delta\rho(c)$ with $\Delta\in [0, 1]$  because $[\tilde g^{M_c(\theta)}(0), \theta]\subset [\tilde g^{M_c(\theta)}(0), \tilde g^{M_c(\theta)+1}(0)[$.\\
Hence
 \begin{equation}\label{Emesure}\mu_c([0, \theta])\in [ M_c(\theta)\rho(c), M_c(\theta)\rho(c)+\rho(c)]\subset \left[ \frac{M_c(\theta)}{N_c+1}, \frac{M_c(\theta)+1}{N_c}\right] .\end{equation}
Hence to estimate the measure $\mu_c([0, \theta])$ we need a good estimate of the number of $j$  such that  $\tilde g_{c}^j(0)$ belongs to $[0, \theta]$.
We have proved in Equation (\ref{Eeta}) that 
\begin{equation}
\label{Eetabis} \eta_{c}(\tau)=\Big(\int_\T \frac{dt}{\sqrt{s(t)}}\Big)^{-1}\frac{c\big(1+\varepsilon_0(\tau,c)\big)}{\sqrt{s(\tau)}}.
\end{equation}

 We deduce from Equation (\ref{Ematf}) that  $\tilde g_{c}(\tau)= \tau+\big(s(\tau)+\varepsilon_1(\tau, c)\big)\eta_c(\tau)$ where uniformly in $\tau$, we have: $\displaystyle{\lim_{c\rightarrow 0}\varepsilon_1(\tau, c)=0}$ and then by Equation (\ref{Eetabis}):
\begin{equation}\label{Epetitair}\int_\tau^{\tilde g_{c}(\tau)}\eta_{c}(t)dt=\eta_c(\tau)^2\big(s(\tau)+\varepsilon_2(\tau, c)\big) =\frac{c^2\big(1+\varepsilon_3(\tau, c)\big)}{\Big(\int_\T \frac{dt}{\sqrt{s(t)}}\Big)^2} .
\end{equation}
This says that the area between $\tau$ and $\tilde g_c(\tau)$ that is limited by the zero section and the graph of $\eta_{c}$ is almost constant (i.e. doesn't depend a lot on $\tau$).\\

 We deduce from Equation (\ref{EestMC})  that 
$$\int_0^{\tilde g_c^{M_c(\theta)}(0)}\eta_c(t)dt\leq\int_0^\theta\eta_c(t)dt<\int_0^{\tilde g_c^{M_c(\theta)+1}(0)}\eta_c(t)dt.$$
Hence 
$$\sum_{j=0}^{M_c(\theta)-1}\int_{\tilde g^j(0)}^{\tilde g^{j+1}(0)}\eta_c(t)dt\leq \int_0^\theta\eta_c(t)dt\leq 
\sum_{j=0}^{M_c(\theta)}\int_{\tilde g^j(0)}^{\tilde g^{j+1}(0)}\eta_c(t)dt.$$
Using Equation (\ref{Epetitair}), we deduce that
$$M_c(\theta)\frac{ c^2\big(1+\varepsilon_4(\theta, c)\big)}{\Big(\int_\T\frac{dt}{\sqrt{s(t)}}\Big)^2}\leq\frac{c\big(1+\varepsilon_5(\theta, c)\big)}{\int_\T\frac{dt}{\sqrt{s(t)}}}\int_0^\theta \frac{dt}{\sqrt{ s(t)}} <(M_c(\theta)+1)\frac{ c^2\big(1+\varepsilon_6(\theta, c)\big)}{\Big(\int_\T\frac{dt}{\sqrt{s(t)}}\Big)^2},
$$
and then 
\begin{equation}\label{EMC}M_c (\theta)
=\left\lfloor \frac{1}{c}\bigg(\Big( {\int_\T \frac{du}{\sqrt{s(u)}}}\Big)\Big(\int_0^{\theta}\frac{dt}{\sqrt{s(t)}}\Big)+\varepsilon_7(\theta, c)\bigg)\right\rfloor.\end{equation}
This implies that
\begin{equation}\label{ENC}N_c=M_c(1)=\left\lfloor \frac{1}{c}\bigg(\Big({\int_\T \frac{dt}{\sqrt{s(t)}}}\Big)^2+\varepsilon_8(1, c)\bigg)\right\rfloor\end{equation}
 and by Equations (\ref{neq 0}), (\ref{Emesure}), (\ref{EMC}) and (\ref{ENC}).
\begin{equation}\mu_c([0, \theta])=\frac{M_c(\theta)}{N_c}+\varepsilon_9(\theta, c)= \frac{\int_0^{\theta}\frac{dt}{\sqrt{s(t)}}}{\int_\T \frac{dt}{\sqrt{s(t)}}}+\varepsilon_{10}(\theta, c)=\mu_0([0,\theta])+\varepsilon_{11}(\theta, c).\end{equation}
As none of the measures $\mu_c$ has atoms, this implies that $c\mapsto \mu_c$ and all the maps $c\mapsto \mu_c([0, \theta])=h_c(\theta)$ are continuous. As every map $h_c$ is non decreasing in the variable $\theta$, we deduce from the Dini-Poly\`a Theorem  \cite[Exercise 13.b page 167]{Rudin} that $c\mapsto h_c$ is continuous for the $C^0$ uniform topology.
\end{proof}

\begin{remk}
If $\rho(A)=\frac{p}{q}$, then we proved that $\frac{\partial \eta_c(\theta)}{\partial c}_{\vert c=A} =  \Big(\int_\T \frac{dt}{\sqrt{\textrm{$s_q\big(t, \eta_A(t)\big)$}}}\Big)^{-1}\frac{1}{\sqrt{\textrm{$s_q\big(\theta, \eta_A(\theta)\big)$}}}$ where  $$Df^q(x)=\begin{pmatrix}
a_q(x)&s_q(x)\\
c_q(x)&d_q(x)
\end{pmatrix}.$$
Indeed, the term $s_q\big(t, \eta_c(t)\big)$ doesn't change when we conjugate by the map $G_A$ where $G_A(\theta, r)=\big(\theta, r+\eta_A(\theta)\big)$ as we did in section \ref{ssrat}.\\
This
 gives  for the conjugacy
$$ h_A(\theta)=\mu_A([0, \theta])=\bigg(\int_\T \frac{dt}{\sqrt{s_q\big(t, \eta_A(t)\big)}}\bigg)^{-1}\int_0^\theta \frac{1}{\sqrt{s_q\big(t, \eta_A(t)\big)}}dt.$$
Observe that this $C^k$ depends on $\theta$.\\
Observe too that  Equation (\ref{Eeta}) can be rewritten as
\begin{equation}\label{Eetab}\eta_c(\theta)=\eta_A(\theta)+(c-A)\bigg[\bigg(\int_\T \frac{dt}{\sqrt{s_q\big(t, \eta_A(t)\big)}}\bigg)^{-1}\frac{1}{\sqrt{s_q\big(\theta, \eta_A(\theta)\big)}}+\varepsilon(\theta,c)\bigg],
\end{equation}
where
$${\rm uniformly}\ \ {\rm for}\ \  \theta \in \T, \quad \lim_{c\rightarrow A}\varepsilon (\theta, c)=0.$$
Observe that the formula doesn't give  any continuous dependence of $h_c$ or $\frac{\partial \eta_c}{\partial c}$ in the $c$ variable, because $q$ can become very large when $c$ changes.

\end{remk}
\subsection{Generating function and regularity}  
To finish the proof of Theorem \ref{Tgeneder}, we have to prove that $u$ admits a derivative with respect to $c$ everywhere and that
$$\forall \theta\in\T, \forall c\in\R ,\quad h_c(\theta)=\theta+\frac{\partial u}{\partial c}(\theta, c).$$
Because we proved that $(\theta, c)\mapsto h_c(\theta)$ is continuous, we will deduce that $u$ is $C^1$.

 Observe that for every $\theta$, the map $c\mapsto u(\theta, c)+c\theta$ is increasing because every $  c\mapsto \eta_c(\theta)$ is increasing.
  \begin{theorem}
 The map $u$ is  $C^1$. 
 Moreover, in this case,  we have 
 \begin{itemize}
 \item the graph of $c+\frac{\partial u}{\partial \theta}(\cdot, c)$ is a leaf of the invariant foliation;
 \item $ \theta \mapsto \theta+\frac{\partial u}{\partial c}(\theta, c)$ is   the semi-conjugacy $h_c$ between $g_c$ and $R_{\rho(c)}$ given in Theorem \ref{Tdiffrat}. We have: $h_c\circ g_c=h_c+\rho(c)$.
 
  \end{itemize}
 \end{theorem}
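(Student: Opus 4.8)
The plan is to establish the one point still missing, namely that $u$ has a partial derivative with respect to $c$ at every $(\theta_0,c)\in\A$ and that
\[
\frac{\partial u}{\partial c}(\theta_0,c)=h_c(\theta_0)-\theta_0 ,
\]
where $h_c(\theta)=\mu_c([0,\theta])$ is the (semi-)conjugacy already produced above, with $\mu_c$ the measure of density $\frac{\partial\eta_c}{\partial c}$ when $\rho(c)$ is rational (Theorem \ref{Tdiffrat}) and the unique $g_c$-invariant probability measure when $\rho(c)$ is irrational (Proposition \ref{Pconjmeas}). Granting this, the conclusion is immediate: $\frac{\partial u}{\partial\theta}(\theta,c)=\eta_c(\theta)-c$ is continuous because the foliation is, and $(\theta,c)\mapsto h_c(\theta)$ is continuous by the third point of Theorem \ref{Tdiffrat}, so $u$ has continuous first partials and is $C^1$; the first bullet is the tautology $c+\frac{\partial u}{\partial\theta}(\cdot,c)=\eta_c$; and the second bullet is the identity $\theta+\frac{\partial u}{\partial c}(\theta,c)=h_c(\theta)$ together with the (semi-)conjugacy relation $h_c\circ g_c=h_c+\rho(c)$ recorded in Proposition \ref{Pconjmeas} and Theorem \ref{Tdiffrat} (the two sides are consistent at $\theta=0$ since $u(0,\cdot)\equiv 0$ forces $\frac{\partial u}{\partial c}(0,c)=0=h_c(0)$).

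To compute the difference quotient, observe that since $u$ is $C^1$ in $\theta$ with $\frac{\partial u}{\partial\theta}(\theta,c)=\eta_c(\theta)-c$ and $u(0,c)=0$, we have $u(\theta_0,c)=\int_0^{\theta_0}(\eta_c(t)-c)\,dt$, so that for $c'>c$
\[
\frac{u(\theta_0,c')-u(\theta_0,c)}{c'-c}=\frac{1}{c'-c}\int_0^{\theta_0}\big(\eta_{c'}(t)-\eta_c(t)\big)\,dt-\theta_0=(\pi_1)_*\Lambda_{c,c'}\big([0,\theta_0]\big)-\theta_0 ,
\]
the last equality being Equation (\ref{E4}); the case $c'<c$ is identical using $\Lambda_{c',c}$. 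Hence it suffices to prove that $(\pi_1)_*\Lambda_{c,c'}\to\mu_c$ in the weak-$*$ topology on $\T$ as $c'\to c$. Indeed $\mu_c$ has no atoms --- it is absolutely continuous when $\rho(c)$ is rational, and it is the invariant measure of an irrational circle homeomorphism otherwise --- so $[0,\theta_0]\subset\T$ has $\mu_c$-null boundary and the portmanteau theorem gives $(\pi_1)_*\Lambda_{c,c'}([0,\theta_0])\to\mu_c([0,\theta_0])=h_c(\theta_0)$, which is exactly the desired limit.

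For the weak-$*$ convergence, note that all $\Lambda_{c,c'}$ with $c'$ near $c$ are $f$-invariant probability measures carried by a fixed compact subannulus, so any sequence $c_n'\to c$ has a subsequence along which $\Lambda_{c,c_n'}$ converges weak-$*$ to some probability measure $\nu$. Since the foliation is continuous, $\eta_{c'}\to\eta_c$ uniformly on $\T$, so the region between the graphs of $\eta_c$ and $\eta_{c'}$ shrinks to the graph of $\eta_c$ and $\nu$ is supported there; and $\nu$ is $f$-invariant because each $\Lambda_{c,c_n'}$ is and $f$ is continuous. Transporting by the homeomorphism $j:\theta\mapsto(\theta,\eta_c(\theta))$, which conjugates $f$ restricted to the invariant graph to $g_c$, the measure $\nu$ is $j_*\bar\mu$ for a $g_c$-invariant probability measure $\bar\mu=(j^{-1})_*\nu$ on $\T$, and $(\pi_1)_*\nu=\bar\mu$. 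If $\rho(c)$ is irrational, $g_c$ is an orientation-preserving circle homeomorphism with irrational rotation number, hence uniquely ergodic, so $\bar\mu=\mu_c$. If $\rho(c)$ is rational, Lemma \ref{Lratmeas} already states that $\lim_{c'\to c}\Lambda_{c,c'}$ exists and is a measure on the graph of $\eta_c$ projecting to $\mu_c$ (its proof is insensitive to the sign of $c'-c$, as in the ``$c<0$ is similar'' reductions of the previous subsection). In either case every subsequential limit of $(\pi_1)_*\Lambda_{c,c'}$ equals $\mu_c$, whence $(\pi_1)_*\Lambda_{c,c'}\to\mu_c$.

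The genuinely hard content --- identifying the limit of $\Lambda_{c,c'}$ at a parameter with rational rotation number, and the continuity in $c$ of $\theta\mapsto h_c(\theta)$ across such parameters --- does not appear in this statement: it was carried out in the preceding subsection through the torsion estimates yielding Theorem \ref{Tdiffrat} and Lemma \ref{Lratmeas}. The only delicate points remaining here are checking that weak-$*$ subsequential limits of $\Lambda_{c,c'}$ are indeed $f$-invariant and supported on the single leaf $\mathrm{graph}(\eta_c)$, and the passage from weak convergence of the projected measures to convergence of the masses of the arcs $[0,\theta_0]$, for which the non-atomicity of $\mu_c$ is precisely what is needed.
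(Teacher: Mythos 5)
Your proof is correct and follows essentially the same route as the paper: express the difference quotient $\frac{u(\theta_0,c')-u(\theta_0,c)}{c'-c}$ as $(\pi_1)_*\Lambda_{c,c'}([0,\theta_0])-\theta_0$, pass to the limit using unique ergodicity when $\rho(c)$ is irrational and Lemma \ref{Lratmeas} when $\rho(c)$ is rational, and conclude from the non-atomicity of $\mu_c$ and the continuity of $(\theta,c)\mapsto h_c(\theta)$ from Theorem \ref{Tdiffrat} that $u$ is $C^1$. The only difference is that you spell out the weak-$*$ compactness and subsequence argument more explicitly (invoking the portmanteau theorem), which the paper leaves implicit.
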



 \begin{proof}  The first point is a consequence of the definition of $u$.\\
 Then $u(\cdot  ,c)$ and $\frac{\partial u}{\partial \theta}=\eta_{c}-c$ continuously depend on $(\theta, c)$.\\
 Observe that with the  notation (\ref{E4}), we have  
 \begin{multline*}
 \Lambda_{c, c'} \big(\{ (\theta, r); \ \ \theta \in [\theta_1, \theta_2],  r\in [\eta_{c}(\theta), \eta_{c'}(\theta)]\}\big)= \\
 =\frac{1}{c'-c}\Big( \big(u(\theta_2, c')-u(\theta_1, c')\big)-\big(u(\theta_2,c)-u(\theta_1,c)\big) \Big)+(\theta_2-\theta_1).
 \end{multline*}
  Moreover, if $\rho(c_0)\in \Q$, we deduce from   Lemma \ref{Lratmeas} that $u(\cdot , c)$   admits a derivative with respect to $c$ at $c_0$   $$\frac{\partial u}{\partial c}(\theta, c_0)=\lim_{c\rightarrow c_0}\frac{1}{c-c_0}\Big(\big(u(\theta,c)-u(0,c)\big)-\big(u(\theta,c_0)-u(0, c_0)\big)\Big)$$
 that is given by

   $$\frac{\partial u}{\partial c}(\theta, c_0)=\mu_{c_0}([0, \theta])-\theta=h_{c_0}(\theta)-\theta$$
  and this derivative continuously depends on $\theta$.\\
Assume now that $\rho(c_0)$ is   irrational and let $c$ tend to $c_0$. Every limit point of $\Lambda_{c,  c_0}$ when $c$ tends to $c_0$  is a Borel probability measure that is invariant by $f$ and supported on the graph of $\eta_{ c_0}$. As there exists only one such measure,  whose projection was denoted by $\mu_{c_0}$,  we deduce that 
$${ \pi_{1*}}\Big(\lim_{c\rightarrow c_0}\Lambda_{c,  c_0}\Big) =\mu_{c_0}.$$
 As $\mu_{c_0}$ has no atom, we have for all $\theta_0\in [0, 1)$
\begin{multline*}
  h_{c_0}(\theta_0)=\ \mu_{c_0}([0, \theta_0])  \\
\!\!\!\!\!\!\!\!\!\!\!\!\!\!=\lim_{c\rightarrow c_0}\Lambda_{c_0, c} (\{ (\theta, r); \theta \in [0, \theta_0],  r\in [\eta_{c_0}(\theta), \eta_{c}(\theta)]\}) \\
\qquad \quad =\lim_{c\rightarrow c_0}\frac{1}{c-c_0}\Big(\big(u(\theta_0,c)-u(0,c)\big)-\big(u(\theta_0, c_0)-u(0,c_0)\big)\Big)+\theta_0 \\
=\frac{\partial u}{\partial c}(\theta_0,c_0)+\theta_{0},
\end{multline*}
hence $u$ admits a derivative with respect to $c$  and 
$$h_{c_0}(\theta)=\mu_{c_0}([0, \theta])=\theta+\frac{\partial u}{\partial c}(\theta, c_0).$$

Because of Theorem \ref{Tdiffrat}, $(\theta, c)\mapsto \frac{\partial u}{\partial c}(\theta,c)=h_c(\theta)-\theta$ is continuous. As the two partial derivatives $\frac{\partial u}{\partial \theta}$ and $\frac{\partial u}{\partial c}$ are continuous in $(\theta, c)$, we conclude that $u$ is $C^1$.

\end{proof}

\section{Proof of Theorem \ref{THolder}}\label{SHolder}
We assume that  $f:\A\rightarrow \A$ is a $C^0$ integrable symplectic twist diffeomorphism   with generating function $u$ for its invariant foliation and use the notation $\eta_c(\theta)=c+\frac{\partial u}{\partial\theta}(\theta, c)$. We also denote the projected Dynamics on the graph of $\eta_c$ by $\tilde g_c(\theta)=\pi_1\circ F\big(\theta, \eta_c(\theta)\big)$ where we fix a lift $F:\R^2\rightarrow \R^2$ of $f$.
We work on some compact set
$$K=\big\{ \big(\theta, \eta_c(\theta)\big); \theta\in\T, c\in [c_1, c_2]\big\}.$$
Replacing $F$ by $F+(0, p)$ for some integer $p\in\N$, we can assume that the rotation number $\cR(x)$ of every $x\in K$ is positive.
We denote by $U=\{ (\theta, \eta_c(\theta)); \theta\in\T, c\in(c_1, c_2)\}.$
Being  $C^1$, $u$ is $C$-Lipschitz on $K$ for some constant $C>0$.
Using the notations of Appendix \ref{ssGreenb}, we recall that
\begin{equation}\label{EGreenusuel}\forall x\in \A, \forall n\in\N, \quad s_{-n}(x)<s_{-(n+1)}<s_-(x)\leq s_+(x)<s_{n+1}(x)<s_n(x)\end{equation}
and that all the maps $s_k$ are continuous. Hence there exists $b>a>0$ and $r\in (0, 1)$ such that
\begin{equation}\label{EGreenfin}
\forall x,y\in K, \quad d(x, y)<r\Rightarrow 0<a\leq s_1(x)-s_2(y)<s_1(x)-s_{-1}(y)\leq b.
\end{equation}

We deduce
$$\forall x,y\in K, 0<a\leq s_1(x)-s_+(y)<s_1(x)-s_{-}(y)\leq b.$$

Working in $\R^2$, we consider for any $c,c'\in (c_1, c_2)$ such that $c<c'$  the domain $D(\theta)$ whose boundary is the union of 
\begin{itemize}
\item a small piece $V$ of the vertical $\{\theta\}\times \R$ that is between $\eta_c$ and $\eta_{c'}$;
\item the arc $F(V)$;
\item pieces of $\eta_c$ and $\eta_{c'}$ that are between $V$ and $F(V)$.
\end{itemize}
\begin{center}
\includegraphics[width=10cm]{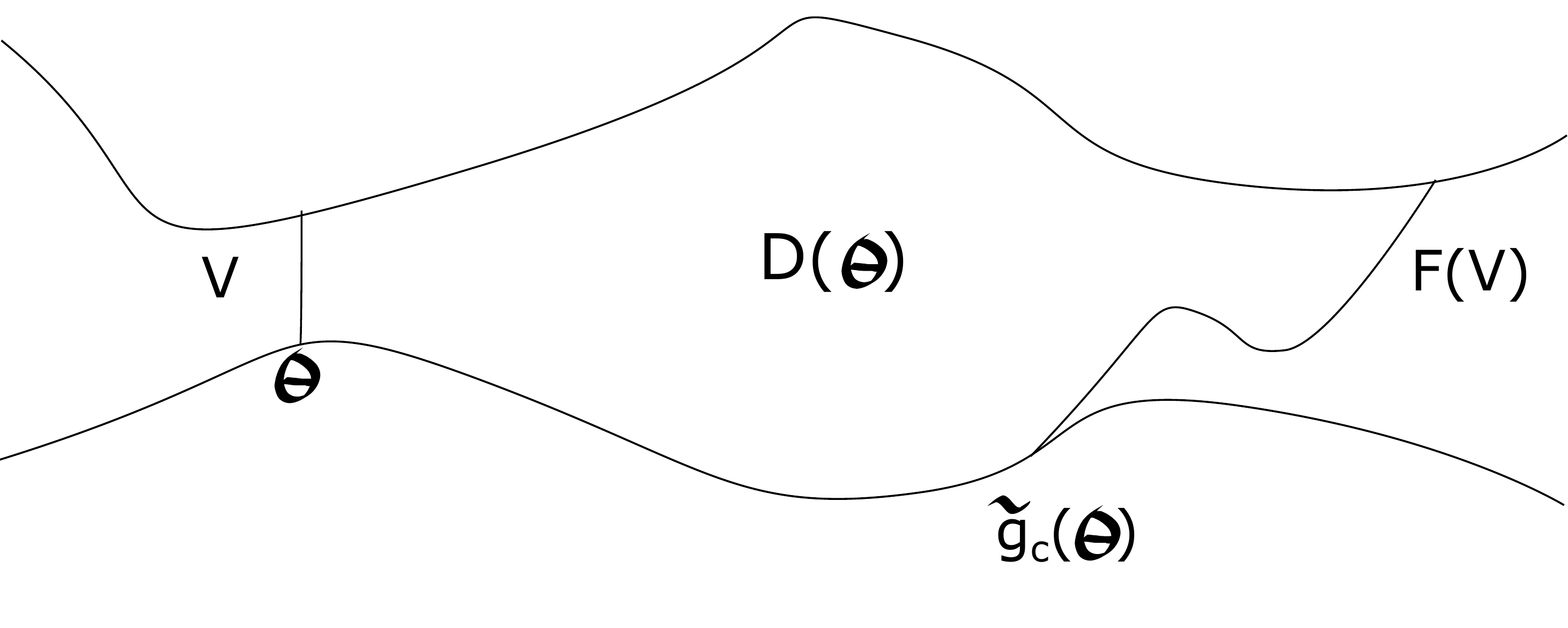}
\end{center}
Using the same method as in subsection \ref{ssrat} for the case of rational curve (that is the decomposition of Lebesgue measure into ergodic measures), we see that the area of $D(\theta)$ doesn't depend on $\theta$.\\
We then cut $D(\theta)$ into three subsets.
\begin{itemize}
\item if $p=\lfloor \tilde g_c(\theta)-\theta\rfloor$, $D_1(\theta)$ is the domain that is between $V$, $V+p$ and the graphs of $\eta_c$ and $\eta_{c'}$; observe that $p=\big\lfloor\cR \big(\theta, \eta_c(\theta)\big)\big\rfloor=\lfloor\rho(c)\rfloor$ doesn't depend on $\theta$\footnote{recall that when restricted to the graph of $\eta_c$, either $f$ is periodic and all points have the same period, or $f$ has no periodic orbit.};
\item $D_2(\theta)$ is the domain between $V+p$, the vertical $V^*$ at $F\big(\theta, \eta_c(\theta)\big)$  and the graphs of $\eta_c$ and $\eta_{c'}$;
\item $D_3(\theta)$ is the triangular domain between  $V^*$ , $F(V)$ and $\eta_{c'}$.
\end{itemize}
\begin{center}
\includegraphics[width=10cm]{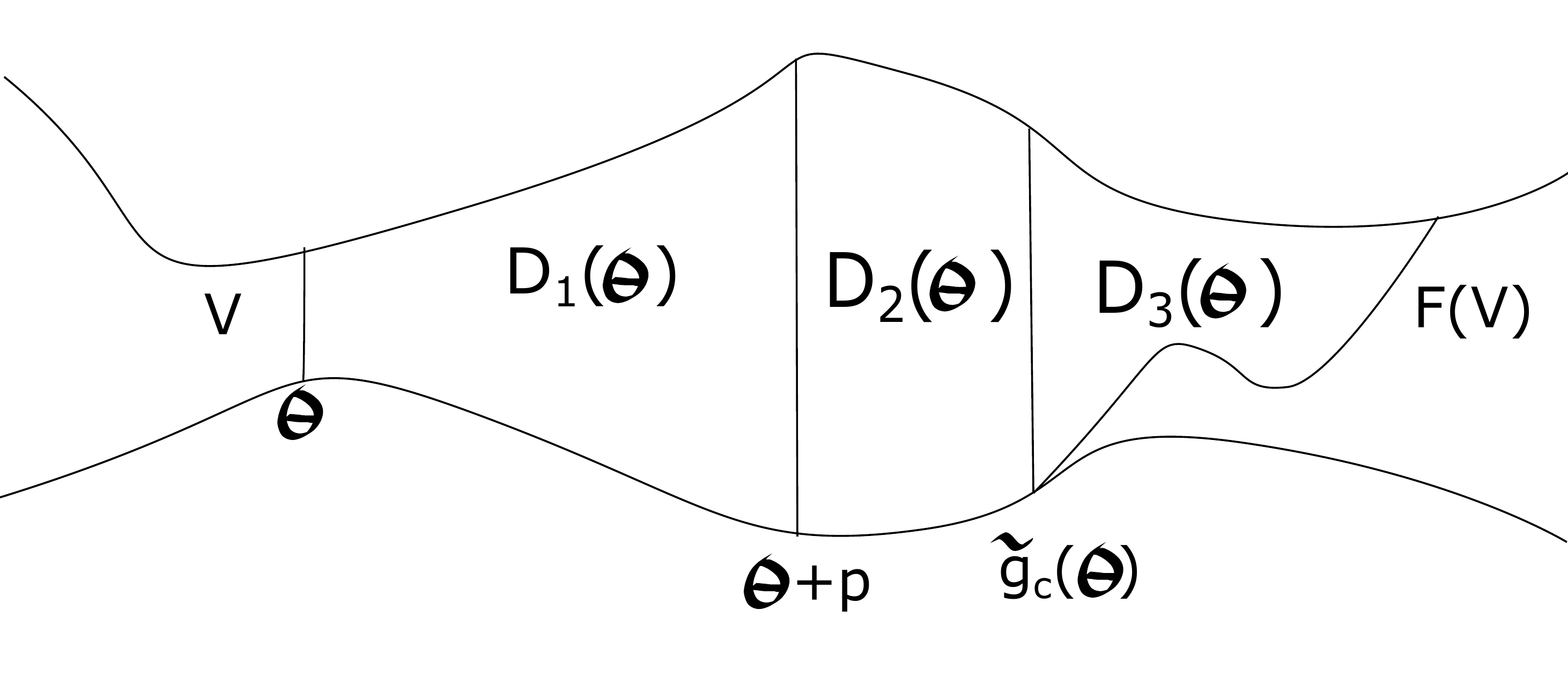}
\end{center}
Then the area of $D_1(\theta)$ is $p(c'-c)$ and doesn't depend on $\theta$. \\
The area of $D_2(\theta)$ is
$$\int_{\theta+p}^{ \tilde g_c(\theta)}\left( (c'+\frac{\partial u}{\partial \theta}(t, c')-(c+\frac{\partial u}{\partial \theta}(t, c)\right) dt,$$
a positive number equal to
$$(c'-c)( \tilde g_c(\theta)-\theta-p)+u( \tilde g_c(\theta), c')-u( \tilde g_c(\theta), c)+u(\theta+p, c)-u(\theta+p, c').$$
We recall that $u$ is $C$ Lipschitz on $K$ and that $ \tilde g_c(\theta)-\theta-p\in (0, 1)$. We deduce that the area of $D_2(\theta)$ belongs to 
$$\big(0, (2C+1)(c'-c)\big).$$
We want now to estimate the area of the triangle $D_3(\theta)$. This triangle has three (curved) sides that are
\begin{itemize}
\item the vertical side $V^*$ whose  length is equal to $\eta_{c'}\big( \tilde g_c(\theta)\big)-\eta_{c}\big( \tilde g_c(\theta)\big)$;
\item the side $F(V)$ with slope at $x\in F(V)$ equal to $s_1(x)$;
\item the side $H$ supported in $\eta_{c'}$ that is Lipschitz with tangent cone at $x$ that is contained in $[s_-(x), s_+(x)]\subset [s_{-1}(x), s_2(x)]$.
\end{itemize}
Because of the continuity of the foliation, there exists $\nu>0$ such that if $c_1\leq c\leq c'\leq c_2$ and $c'-c<\nu$, then the length of every piece of vertical $V$ between $\eta_c$ and $\eta_{c'}$ is less than $r$\footnote{Recall that $r$ was choosen to satisfy Formula    (\ref{EGreenfin}).}and the same is true for $F(V)$ because of the uniform continuity of $F$ on the strip between $\eta_{c_1}$ and $\eta_{c_2}$.  Using the fact that the tangent cone to the graph of $\eta_{c'}$ is between the two Green bundles (see Appendix \ref{ssGreenb}) and Equation (\ref{EGreenusuel}), we deduce that if $c'-c\in [0, \nu)$, $D_3(\theta)$ 
\begin{itemize}
\item is contained in a true triangle with vertical side equal to $V^*$, upper side with slope equal to $\displaystyle{\max_{x\in H}s_2(x)}$ and slope of lower side equal to $\displaystyle{\min_{x\in F(V)}s_1(x)};$
\item contains a true triangle with vertical side equal to $V^*$, upper side with slope equal to $\displaystyle{\min_{x\in H}s_{-1}(x)}$ and slope of lower side equal to $\displaystyle{\max_{x\in F(V)}s_1(x)};$

\end{itemize}

Observe that when the triangle is a true triangle, its horizontal height has length $\frac{\delta}{S-T}$ where $\delta$ is the length of the vertical side, $T$ is  the slope of the side coming from the upper point of the vertical side and $S$  is  the slope of the side coming from the lower point of the vertical side.  The area is then $\frac{\delta^2}{2(S-T)}$

These remarks and Equation (\ref{EGreenfin}) imply that the area of $D_3(\theta)$ belongs to the interval
$$\left[\frac{\Big(\eta_{c'} \big(\tilde g_c(\theta)\big)-\eta_{c}\big((\tilde g_c(\theta)\big)\Big)^2}{2b},\frac{\Big(\eta_{c'}\big((\tilde g_c(\theta)\big)-\eta_{c}\big((\tilde g_c(\theta)\big)\Big)^2}{2a}
\right].$$
Finally, the sum $A(\theta)$  of the area of $D_2(\theta)$ and $D_3(\theta)$ doesn't depend on $\theta$ and
\begin{itemize}
\item at a point (that always exists because $\int_\T(\eta_c-\eta_{c'})=c-c'$) such that $\eta_{c'}(\theta)-\eta_c(\theta)=c'-c$, we have 
$$A(\theta)\in\left[ \frac{(c'-c)^2}{2b},\frac{(c'-c)^2}{2a}+ (2C+1)(c'-c)
\right]$$ $$\subset \left[ \frac{(c'-c)^2}{2b},  \left( (2C+1)+\frac{1}{2a}\right)(c'-c)\right];
$$
\item at every point, we have $$A(\theta)\geq \textrm{area}\big(D_3(\theta)\big)\geq \frac{\Big(\eta_{c'}\big((\tilde g_c(\theta)\big)-\eta_{c}\big((\tilde g_c(\theta)\big)\Big)^2}{2b};$$
\end{itemize}
This implies that for $c_1\leq c\leq c'\leq c_2$ such that $c'-c<\nu$, we have
$$\forall \theta \in \R, \quad \frac{\big(\eta_{c'}(\theta)-\eta_{c}(\theta)\big)^2}{2b}\leq \left( (2C+1)+\frac{1}{2a}\right)(c'-c);$$
so
$$\forall \theta \in \R,\quad  \eta_{c'}(\theta)-\eta_{c}(\theta)\leq \sqrt{2b \left((2C+1)+\frac{1}{2a}\right)(c'-c)}.$$
Hence we obtain on the compact $K$ a uniform local constant of H\" older, and this implies that $\eta_c$ is uniformly $\frac{1}{2}$-H\"older in the variable $c$ on $K$.

\section{Proof of Theorem \ref{TC0arn}}\label{SCOarna}

Let us consider a $C^0$-foliation $\cF$ of $\cV=\{ (\theta, r); \theta\in(\alpha, \beta)\quad\text{and}\quad \eta_{c_-}(\theta)<r<\eta_{c_+}(\theta)\}$ or $\cV=\T$ into graphs: $(\theta, c)\in\cw \mapsto \big(\theta , \eta_c(\theta)\big)$, where $\frac{1}{\beta-\alpha}\int^\beta_\alpha \eta_c = c$ \big(when $\cV=\A$, $\cw=\A$ and in the other cases $\cw=(\alpha, \beta)\times (c_-, c_+)$\big). Then there exists a continuous function $u:\cw\rightarrow \R$ that admits a continuous derivative with respect to $\theta$ such that $\eta_c(\theta)=c+\frac{\partial u}{\partial \theta}(\theta,c)$ and $u(0, c)=0$, function that we called generating function of the foliation when $\cV=\A$. \\
\subsection{Proof of the first implication}
We assume that this foliation  is  exact symplectically homeomorphic to the standard horizontal foliation $\cF_0=\Phi^{-1}(\cF)$ by some exact symplectic homeomorphism $\Phi$.

Observe that the foliation $\cF$ is transverse to the ``vertical'' foliation $\cg_0$ into $\pi_1^{-1}(\{\theta\})$ for $\theta\in\pi_1(\cV)$. Hence the foliation $\cg=\Phi^{-1}(\cg_0)$ is a foliation of $\cU$ that is transverse to the standard (``horizontal'') foliation $\cF_0=\Phi^{-1}(\cF)$. This exactly means that the foliation  $\cg$ is a foliation into graphs of maps $\zeta_\theta: I=\pi_2(\cU)\rightarrow \pi_1(\cU)$. Hence there exists a continuous function $v:\cw\rightarrow \R$ that admits a continuous derivative with respect to $r$ such that the foliation $\cg$ is the foliation into graphs $\Phi^{-1}(\pi_2^{-1}(\{\theta\}))$ of $\zeta_\theta: r\mapsto \theta+\frac{\partial v}{\partial r}(\theta, r)$. Observe that by definition of $\zeta_\theta$, we have $\Phi\big(\zeta_\theta(c), c\big)=\big(\theta,\eta_c(\theta)\big)$. As a result, every map $\theta\mapsto \zeta_\theta(c)$ is a homeomorphism onto its image.
\begin{center}
\includegraphics[width=8cm]{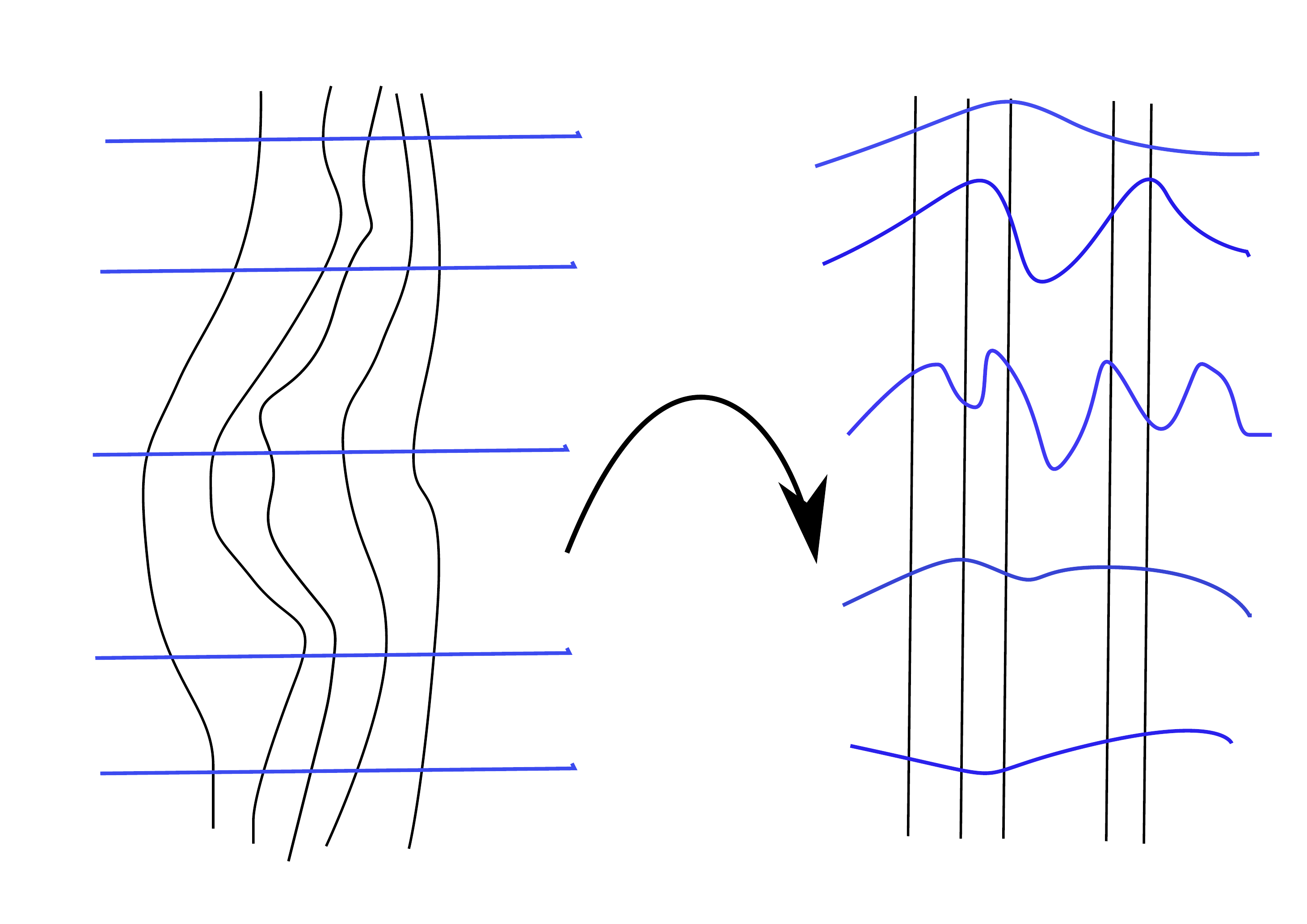}
\end{center}

We now use the preservation of the area. We fix $\theta_1<\theta_2$ in $\pi_1(\cV)$  and $c_1<c_2$ in $\pi_2(\cU)$ such that the domain $D_1$ delimited by the horizontals $\pi_2^{-1}( \{ c_1\})$, $\pi_2^{-1}( \{ c_2\})$, the graph of $c\in[c_1, c_2]\mapsto \zeta_{\theta_1}(c)$ and the graph of $c\in[c_1, c_2]\mapsto \zeta_{\theta_2}(c)$ is contained in $\cU$. Because $\Phi$ is a symplectic homeomorphism, $\Phi$ preserves the area and so $D_1$ and $\Phi(D_1)$ have the same area. Observe that  $\Phi(D_1)$ is the domain delimited by the graphs of $\eta_{c_1}$, $\eta_{c_2}$ and the verticals $\pi_1^{-1}(\{\theta_1\})$ and $\pi_1^{-1}(\{ \theta_2\})$.

This can be written
$$  \int_{c_1}^{c_2}\Big( \big(\theta_2+\frac{\partial v}{\partial c} (\theta_2,c)\big)-\big(\theta_1+\frac{\partial v}{\partial c} (\theta_1,c)\big)\Big) dc=\int_{\theta_1}^{\theta_2}\Big(\big(c_2+\frac{\partial u}{\partial \theta} (\theta,c_2)\big) - \big(c_1+\frac{\partial u}{\partial \theta} (\theta,c_1)\big)\Big)d\theta .$$
It follows that 
\begin{multline*}
u(\theta_2,c_2)-u(\theta_1,c_2)-u(\theta_2,c_1)+u(\theta_1,c_1) = \\
=v(\theta_2,c_2) -v(\theta_2,c_1) -v(\theta_1,c_2) +v(\theta_1,c_1).
\end{multline*}

Evaluating for $\theta_1=0$ we find
$$u(\theta_2,c_2)-u(\theta_2,c_1) = v(\theta_2,c_2) -v(\theta_2,c_1) -v(0,c_2) +v(0,c_1).$$
Finally, as $v$ admits a continuous partial derivative with respect to $c$, we conclude that 
$\frac{\partial u}{\partial c}(\theta, c) = \frac{\partial v}{\partial c}(\theta, c)-\frac{\partial v}{\partial c}(0, c)$ exists and is continuous. Hence $u$ is $C^1$.  Moreover, every map $\theta\mapsto \theta+\frac{\partial u}{\partial c}(\theta, c)= \zeta_c(\theta)-\frac{\partial v}{\partial c}(0, c)$ is a homeomorphism onto its image and we have established the first implication.\\
\subsection{ Proof of the second implication}
We assume that there exists a $C^1$ map $u : \cw \to \R$ such that 
\begin{itemize}
\item $u(0,c) = 0$ for all $c\in I$ where $I=(c_-, c_+)$ or $I=\R$,
\item $\eta_c(\theta) = c+ \frac{\partial u}{\partial \theta}(\theta,c)$ for all $(\theta, c) \in \cw$,
\item for all $c\in I$, the map $\theta \mapsto \theta + \frac{\partial u}{\partial c}(\theta , c)$ is a injective.
\end{itemize}
Then we can define a unique homeomorphism $\Phi$  by  $$\Phi\Big(\theta + \frac{\partial u}{\partial c}(\theta , c), c\Big)=\Big(\theta, c+ \frac{\partial u}{\partial \theta}(\theta,c)\Big).$$
Let $v:\R^2\rightarrow \R_+$ be the $C^\infty$ function with support in $B(0, 1)$ defined  by $v(\theta, c) = a \exp\big((1-\|(\theta, c)\|)^{-2}\big)$ for $(\theta, c) \in B(0,1)$ and where $a$ is    such that $\int  v=1$. We denote by $v_\varepsilon$ the function $v_\varepsilon (x)=\frac{1}{\varepsilon^2}v(\frac{x}{\varepsilon})$. Then we define for every $\varepsilon>0$.
$$U_\varepsilon(\theta, c)=(u*v_\varepsilon) (\theta, c),$$
where we recall the formula for the convolution
$$u*v(x)= \int u(x-y)v(y)dy.$$
Note that if we fix a compact into $\cw$, $U_\varepsilon$ is well defined on it for $\varepsilon$ small enough.\\
 Then when $\varepsilon$ tends to $0$, the functions $U_\varepsilon$ tend  to $U$ in the $C^1$ compact-open  topology. Moreover, when $\cU=\A$, $U_\varepsilon$ is  $1$-periodic in $\theta$  and smooth.\\
Observe that for every $\theta$, the function $c\mapsto c+\frac{\partial u}{\partial \theta}(\theta,c)$ is  increasing.
 We deduce that  the convolution $c\mapsto c+\frac{\partial U_\varepsilon}{\partial \theta}(\theta,c)$ is a $C^\infty$ diffeomorphism as it is a mean of $C^\infty$ diffeomorphisms  thanks to Lemma \ref{Philippe}. Finally, the maps $F_\varepsilon: (\theta, c)\mapsto \big(\theta,  c+\frac{\partial U_\varepsilon}{\partial \theta}(\theta, c)\big)$ define  $C^\infty$ foliations that  converge to the initial foliation $F_0: (\theta, c)\mapsto \big(\theta,  c+\frac{\partial u}{\partial \theta}(\theta, c)\big)$ for the $C^0$ compact-open topology when $\varepsilon$ tends to $0$.\\ 
Observe that the  $h_c$'s are   assumed to be increasing. We deduce  that the  maps $G_\varepsilon: (\theta, c)\mapsto (\theta+\frac{\partial U_\varepsilon}{\partial c}(\theta,c), c)$ are $C^\infty$ diffeomorphisms of $\A$ that    converge for the $C^0$  compact-open topology   to  $G_0:(\theta, c)\mapsto (\theta+\frac{\partial u}{\partial c}(\theta, c), c)$.\\
Finally, the $\ch_\varepsilon= F_\varepsilon\circ G_\varepsilon^{-1}$  are $C^\infty$ diffeomorphisms of $\A$ that converge   for the $C^0$  compact-open topology  to $F_0\circ G_0^{-1}=\Phi$.

This exactly means that $\Phi$ is a symplectic homeomorphism. 
\begin{lemma}\label{Philippe}
Let $f : \R\to \R$ be a non-negative, non-trivial, smooth, integrable and even function such that $ f' \leq 0$ on $[0,+\infty)$. Then if $g : \R \to \R$ is increasing, $f*g$ is an increasing $C^\infty$ diffeomorphism.
\end{lemma}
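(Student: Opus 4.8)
The plan is to prove that $\varphi:=f*g$ is a strictly increasing, smooth, proper self-map of $\R$ whose derivative never vanishes; by the inverse function theorem this is precisely the assertion that $\varphi$ is a $C^\infty$ diffeomorphism of $\R$. (Here I read ``$g$ increasing'' as ``$g$ an increasing homeomorphism of $\R$'', which is the only case used in the paper and is what is needed for the surjectivity of $\varphi$.)

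First I would check that $\varphi(x)=\int_\R f(x-t)\,g(t)\,dt$ is well defined and $C^\infty$. In every application $f$ is a compactly supported bump (a slice of the mollifier $v_\varepsilon$) and $g-\mathrm{id}$ is bounded on the relevant range, so the integral converges and one differentiates under the integral sign, $\varphi^{(k)}(x)=\int_\R f^{(k)}(x-t)\,g(t)\,dt$; more generally the decay $f(x)=o(1/|x|)$, forced by integrability together with the monotonicity of $f$ on each half-line, suffices as soon as $g$ has at most linear growth. An integration by parts in $t$ — the boundary contributions at $\pm\infty$ vanish, and $g$, being monotone, has locally bounded variation with $dg\ge 0$ — gives the convenient formula
$$\varphi'(x)=\int_\R f(x-t)\,dg(t).$$

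From here the argument is soft. Since $f\ge 0$ and $dg\ge 0$, we get $\varphi'\ge 0$, so $\varphi$ is non-decreasing — concretely it is an ``average'' of the increasing translates $x\mapsto g(x-t)$ weighted by the probability density $f$. For strict positivity, evenness together with $f'\le 0$ on $[0,+\infty)$ shows $f(0)=\max f$, which is positive by non-triviality, so $f>0$ on some open interval $J$; as $g$ is strictly increasing, $dg$ charges $x-J$ positively, whence $\varphi'(x)\ge\int_{x-J}f(x-t)\,dg(t)>0$ for every $x$. For properness, write $\varphi(x)=\int_\R f(y)\,g(x-y)\,dy$: if $x\ge x_0$ then $g(x-y)\ge g(x_0-y)$, so the integrands are bounded below by the integrable function $y\mapsto f(y)g(x_0-y)$, and Fatou's lemma gives $\varphi(x)\to+\infty$ as $x\to+\infty$ because for every $y$ with $f(y)>0$ the integrand tends to $+\infty$ while $\{f>0\}$ has positive measure; symmetrically $\varphi(x)\to-\infty$ as $x\to-\infty$. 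A continuous, strictly increasing, proper map of $\R$ is a homeomorphism onto $\R$, and being $C^\infty$ with $\varphi'>0$ everywhere it is a $C^\infty$ diffeomorphism.

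The one point needing care is the very first — legitimizing the convolution and its differentiation when $g$ is unbounded — but this is just bookkeeping about decay of $f$ versus growth of $g$, and is immediate in the cases actually invoked ($f$ compactly supported, $g-\mathrm{id}$ bounded). Everything past the formula for $\varphi'$ is routine.
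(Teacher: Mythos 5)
Your proof is correct, but it takes a genuinely different route from the paper's. The paper differentiates under the integral sign to get $(f*g)'(x)=\int_\R f'(y)\,g(x-y)\,dy$ and then exploits that $f'$ is \emph{odd} to fold the integral onto $[0,+\infty)$:
$$(f*g)'(x)=\int_0^{+\infty} f'(y)\,\bigl(g(x-y)-g(x+y)\bigr)\,dy,$$
after which positivity is immediate because $f'\le 0$ on $[0,+\infty)$ (not identically zero) and $g(x-y)-g(x+y)<0$ for $y>0$. You instead integrate by parts, pass the derivative onto $g$ as a Stieltjes measure, and land on $\varphi'(x)=\int_\R f(x-t)\,dg(t)$, where positivity follows from $f\ge 0$, $dg\ge 0$, and the fact that $f>0$ near $0$ (which you deduce from evenness and $f'\le 0$ on $[0,+\infty)$, giving $f(0)=\max f>0$). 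Both are one-line formulas for $\varphi'$, but they rest on different mechanisms: the paper's uses the symmetry of $f$, yours uses the monotonicity of $g$ and trades the derivative of $f$ for the measure $dg$. A small bonus of your version is that you explicitly establish properness of $\varphi$ (so that it is a diffeomorphism \emph{onto} $\R$), a point the paper's proof leaves tacit; you also correctly flag that "increasing" must be read as "increasing homeomorphism" for surjectivity to hold, and that the integration by parts requires some decay-vs.-growth bookkeeping that is automatic in the cases where the lemma is actually invoked.
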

\begin{proof}
As $f$ is even, $f'$ is odd. Just notice that 
$$(f*g)'(x) = \int_\R f'(y)g(x-y) dy = \int_0^{+\infty} f'(y) \big(g(x-y)-g(x+y)\big) dy.$$
The result follows as $g(x-y)-g(x+y) <0$ and $f'(y) \leq 0$ and does not vanish everywhere.
\end{proof}

\section{Proof  of Proposition \ref{PLip} and Corollary \ref{bolle}}\label{Sfollip}
\subsection{Proof  of Proposition \ref{PLip}: biLipschitz foliations with $C^1$ generating functions are straightenable}

 Let $u:\A\rightarrow\R$ be the $C^1$ generating function of a continuous foliation of $\A$ into graphs. 
 
 We recall the following result that is due to Minguzzi, \cite{Min2014}.
 \begin{thm*}[Minguzzi]Let $\Omega$ be an open subset of $\R^2$ and let $f\in C^1(\Omega, \R)$. Then the following conditions are equivalent:
 \begin{enumerate}
 \item for every $x$, the partial derivative $\frac{\partial f}{\partial x}(x,\cdot)$ is locally Lipschitz, locally uniformly with respect to $x$;
 \item for every $y$, the partial derivative $\frac{\partial f}{\partial y}(\cdot,y)$ is locally Lipschitz, locally uniformly with respect to $y$.
 \end{enumerate}
 If they hold true, then on a subset $E\subset\Omega$ with full Lebesgue measure in $\Omega$, $\frac{\partial^2f}{\partial x\partial y}$ and $\frac{\partial^2f}{\partial y\partial x}$ exist and are equal.
 \end{thm*}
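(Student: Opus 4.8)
The plan is to deduce both the equivalence of the two conditions and the a.e.\ equality of the mixed partials from a single elementary identity for the \emph{mixed second difference}
$$\Delta_f(x,y;h,k):=f(x+h,y+k)-f(x+h,y)-f(x,y+k)+f(x,y),$$
which, because $f\in C^1$, can be written as an iterated integral in two ways:
$$\Delta_f(x,y;h,k)=\int_0^h\!\Big(\tfrac{\partial f}{\partial x}(x+s,y+k)-\tfrac{\partial f}{\partial x}(x+s,y)\Big)ds=\int_0^k\!\Big(\tfrac{\partial f}{\partial y}(x+h,y+t)-\tfrac{\partial f}{\partial y}(x,y+t)\Big)dt.$$
To prove $(1)\Rightarrow(2)$ (the converse being the same argument with $x$ and $y$ interchanged), I would fix a compact rectangle $Q\subset\Omega$ and a constant $L$ with $|\tfrac{\partial f}{\partial x}(x',y_1)-\tfrac{\partial f}{\partial x}(x',y_2)|\le L|y_1-y_2|$ for all $(x',y_1),(x',y_2)\in Q$, as furnished by $(1)$. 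For a rectangle $[x,x+h]\times[y,y+k]\subset Q$, the first representation and this bound give $|\Delta_f(x,y;h,k)|\le L|h|\,|k|$. Feeding this into the second representation, dividing by $|k|$ and letting $k\to 0$, and using that $t\mapsto \tfrac{\partial f}{\partial y}(x+h,y+t)-\tfrac{\partial f}{\partial y}(x,y+t)$ is continuous (this is where $f\in C^1$ is used), the average over $[0,k]$ converges to the value at $t=0$, yielding $|\tfrac{\partial f}{\partial y}(x+h,y)-\tfrac{\partial f}{\partial y}(x,y)|\le L|h|$, i.e.\ condition $(2)$ with the same constant.

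For the second assertion, assume the now-equivalent conditions and keep $Q,L$ as above. For each fixed $x$ the map $y\mapsto\tfrac{\partial f}{\partial x}(x,y)$ is Lipschitz, hence differentiable at a.e.\ $y$; by Fubini $\tfrac{\partial^2 f}{\partial y\partial x}$ exists Lebesgue-a.e.\ on $Q$ with $|\tfrac{\partial^2 f}{\partial y\partial x}|\le L$, and symmetrically $\tfrac{\partial^2 f}{\partial x\partial y}$ exists a.e. Since $y\mapsto\tfrac{\partial f}{\partial x}(x,y)$ is absolutely continuous, one has $\tfrac{\partial f}{\partial x}(x,y+k)-\tfrac{\partial f}{\partial x}(x,y)=\int_0^k\tfrac{\partial^2 f}{\partial y\partial x}(x,y+t)\,dt$; inserting this in the first representation of $\Delta_f$ and applying Fubini (the integrand is bounded and measurable) gives $\Delta_f(x,y;h,k)=\iint_{[x,x+h]\times[y,y+k]}\tfrac{\partial^2 f}{\partial y\partial x}$, and the same manipulation on the second representation gives $\Delta_f(x,y;h,k)=\iint_{[x,x+h]\times[y,y+k]}\tfrac{\partial^2 f}{\partial x\partial y}$. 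Thus two bounded measurable functions have equal integral over every small corner-rectangle based at each point of $Q$, so by the Lebesgue differentiation theorem they agree at every common Lebesgue point, hence almost everywhere on $Q$; exhausting $\Omega$ by such rectangles produces the full-measure set $E$.

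The analytic core — the two integral representations of $\Delta_f$ and the bound $|\Delta_f(x,y;h,k)|\le L|h|\,|k|$ — is routine, and I do not expect it to be the difficulty. The main obstacle is the measure-theoretic bookkeeping: one must take $Q$ to be a genuine product set so that the one-variable Lipschitz estimate applies uniformly along every segment occurring in the representations of $\Delta_f$; one must check that the a.e.-differentiability statements coming from the one-dimensional Rademacher theorem combine correctly under Fubini, so that $E$ can indeed be chosen of full measure and not merely of positive measure on slices; and one must verify that the points of $E$ are common Lebesgue points of both mixed partials, which is exactly what licenses the final differentiation step. Handling the ``locally uniform'' quantifiers cleanly throughout (rather than fixing a single global constant) is the only real source of care.
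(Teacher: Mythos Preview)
The paper does not prove this statement: it is quoted verbatim as a result ``due to Minguzzi, \cite{Min2014}'' and is used as a black box in the proof of Proposition~\ref{PLip}. There is therefore no proof in the paper to compare your proposal against.

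That said, your argument is correct and is essentially the standard route to such a result. The two integral representations of the mixed second difference $\Delta_f$ are exactly the right object, the passage from the bound $|\Delta_f|\le L|h||k|$ to the Lipschitz estimate on $\tfrac{\partial f}{\partial y}$ via averaging and continuity is clean, and the identification of $\Delta_f$ with the double integral of each mixed partial (using absolute continuity on slices and Fubini) is the natural way to obtain the a.e.\ equality. One small point worth being explicit about in the final step: rather than invoking Lebesgue differentiation over arbitrary corner-rectangles (where eccentricity issues can arise), it suffices to note that two $L^\infty$ functions with the same integral over every axis-parallel rectangle agree a.e., or simply restrict to cubes, for which the differentiation theorem holds for all $L^1_{\mathrm{loc}}$ functions. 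Your closing paragraph already flags these bookkeeping points, and none of them is a genuine obstruction.
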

\subsubsection{Proof of the first implication}
 We assume that the invariant foliation is   $K$-Lipschitz on a compact $\ck =\{ (\theta, \eta_c(\theta)); \theta\in \T, c\in [a, b]\}$, which means 
\begin{equation}\label{Efollip}
\forall \theta \in \T, \forall c_1, c_2\in  [a,b], \quad \frac{|c_1-c_2|}{K}\leq \left| \eta_{c_1}(\theta)-\eta_{c_2}(\theta)\right|\leq K|c_1-c_2|.
\end{equation} 
As $\eta_c(\theta)=c+\frac{\partial u}{\partial \theta}(\theta, c)$, this means that $\frac{\partial u}{\partial \theta}(\theta, .)$ is locally Lipschitz, locally uniformly with respect to $\Theta$. Hence, by Minguzzi theorem, for every $c\in (a, b)$, $\frac{\partial u}{\partial c}(\cdot , c)$ is locally Lipschitz, locally uniformly with respect to $c$ and at almost $(\theta, c)\in \T\times (a, b)$, we have
$\frac{\partial^2u}{\partial c\partial \theta}$ and $\frac{\partial^2u}{\partial \theta\partial c}$ exist and are equal and uniformly bounded.\\
Hence $h_c=Id_\T+ \frac{\partial u}{\partial c}(\cdot ,c )$ is locally uniformly Lipschitz and 
because of Equation (\ref{Efollip}), we have Lebesgue almost everywhere
\begin{equation}\label{Ehlip} \frac{\partial^2u}{\partial \theta\partial c}(\theta, c)= \frac{\partial^2u}{\partial c\partial \theta}(\theta, c)=\frac{\partial \eta}{\partial c}(\theta_0, c_0)-1\in \big[ -1+\frac{1}{K}, -1+K\big]=[k_-, k_+].\end{equation}
This implies that $h_c(\theta)=\theta +\frac{\partial u}{\partial c}(\theta, c)$ defines a $(k_-, k_+)$-biLipschitz   homeomorphism of $\T$ for almost every $c\in [a, b]$ and then for all $c\in [a, b]$ by continuity. By Theorem \ref{TC0arn}, we deduce that $u$ is the generating function of an exact symplectic homeomorphism   $\Phi:\A\rightarrow \A$  that  maps the invariant foliation onto the standard one.
\subsubsection{  Proof of the second implication} We assume that  the map $u$ is $C^1$ with $\frac{\partial u}{\partial \theta}$ locally Lipschitz continuous and $\frac{\partial u}{\partial c}$ uniformly Lipschitz in the variable  $\theta$ on any compact set of $c$'s and there exists two constants $k_+>k_->-1$ such that $\frac{\partial^2 u}{\partial \theta\partial c}(\theta,c)\in [k_-, k_+]$ almost everywhere. Because $\frac{\partial u}{\partial c}$ is uniformly Lipschitz in the variable  $\theta$ on any compact set of $c$'s, we can apply Minguzzi theorem and write another time
 Equation (\ref{Ehlip}) which implies that the foliation is biLipschitz.
 \subsection{Proof  of  Corollary \ref{bolle}}
Let $k\geq 1$ and $r\mapsto f_r$ be a $C^k$-foliation in graphs and let $u$ be its generating function. As $k\geq 1$, the foliation is Lipschitz when restricted to every compact set. Hence we can use Proposition \ref{PLip}.   In this case,  $u$ is the generating function of an exact symplectic homeomorphism   $\Phi:\A\rightarrow \A$  that  maps the standard foliation onto the invariant one  and we have
 $$\Phi\Big(\theta + \frac{\partial u}{\partial c}(\theta , c), c\Big)=\Big(\theta, c+ \frac{\partial u}{\partial \theta}(\theta,c)\Big).$$
 Moreover, $u$ is   $C^k$ hence $F_0(\theta, c)=\big(\theta, c+\frac{\partial u}{\partial \theta}(\theta, c)\big)$ defines a $C^{k-1}$ homeomorphism that is locally biLipschitz, hence a $C^{k-1}$ diffeomorphism. \\
Also  $h_c(\theta)=\theta+\frac{\partial u}{\partial c}(\theta, c)$  is $C^{k-1}$ in $(\theta, c)$. Observe that every $h_c$ is a biLipschiz homeomorphism that is $C^{k-1}$, hence $G_0: (\theta, c)\mapsto (h_c(\theta), c)$ is also a $C^{k-1}$ diffeomorphism  and then $\Phi=F_0\circ G_0^{-1}$ is a $C^{k-1}$ symplectic diffeomorphism (where a $C^0$- diffeomorphism is an homeomorphism).

 \section{More results on symplectic homeomorphisms that are $C^0$-integrable}\label{shomeomC0}
 
\subsection{Proof of Proposition \ref{Psymfolarn}}
 Let $f:\A \to \A$ be an exact  symplectic homeomorphism.  We assume  that $f$ has an invariant foliation $\cF$ into $C^0$ graphs that is  symplectically homeomorphic (by $\Phi^{-1} : \A \to \A$) to the standard foliation $\cF_0=\Phi^{-1}(\cF)$. Then the standard foliation is invariant by the exact symplectic homeomorphism $g=\Phi^{-1}\circ f\circ \Phi$. Hence we have
 $$g(\theta, r)=(g_1(\theta ,r), r).$$
 As $g$ is area preserving, for every $\theta\in [0, 1]$ and every $r_1<r_2$, the area of $[0, \theta]\times [r_1, r_2]$ is equal to the area of $g\big( [0, \theta]\times [r_1, r_2]\big)$, i.e.
 $$\theta(r_2-r_1)=\int_{r_1}^{r_2} \big(g_1(\theta, r)-g_1(0, r)\big)dr.$$
 Dividing by $r_2-r_1$ and taking the limit  when $r_2$ tends to $r_1$, we obtain
 $$g_1(\theta, r_1)=\theta+g(0, r_1).$$
 This proves the proposition for $\rho=g_1(0, \cdot)$.
 
 \subsection{Proof  of   Corollary \ref{Corota}}
 The  {\em   if} part is obvious by Proposition \ref{Psymfolarn}.

Let us prove the {\em  only if} part, that is we assume $f$ is $C^0$-integrable with the Dynamics on each leaf conjugated to a rotation. We denote by    $u : \A \to \R$ the map given by  theorem  \ref{Tgeneder} and that enjoys the properties of Theorem \ref{Tdiffrat}.  Hence $h_c:\theta \mapsto \theta+\frac{\partial u}{\partial c}(\theta,c)$ is a semi-conjugation between the projected Dynamics $g_c: \theta\mapsto \pi_1\circ f\big(\theta, c+\frac{\partial u}{\partial \theta}(\theta,c)\big)$ and the rotation $R_{\rho(c)}$ of $\T$ and even is a conjugation when $\rho(c)$ is rational.\\
If $\rho(c)$ is irrational, it follows from the hypothesis that $g_c$ is conjugated to a rotation. As the Dynamics is minimal, there is up to constants a unique (semi)-conjugacy and and then $h_c$ is a true conjugation. We then conclude by using Theorem \ref{TC0arn}.

\subsection{Proof of   Corollary  \ref{corLip} }

\subsubsection{Arnol’d-Liouville coordinates for $f$}

 Let $f:\A\rightarrow\A$ be a symplectic twist diffeomorphism that is Lipschitz-integrable with generating function $u$ of its invariant foliation.\\
  By Theorem \ref{Tgeneder} and Proposition \ref{PLip}, $u$ is the generating function of an exact symplectic homeomorphism   $\Phi:\A\rightarrow \A$  that  maps the standard foliation onto the invariant  one and for every compact subset $\ck\subset \A$, there exists two constant $k_+>k_->-1$ such that $\frac{\partial^2 u}{\partial \theta\partial c}\in[k_-, k_+]$  Lebesgue almost everywhere in $\ck$.
  
  By Proposition \ref{Psymfolarn}, we have $$\forall (x, c)\in \A,\quad  \Phi^{-1}\circ f\circ \Phi(x, c)=(x+\rho(c), c);$$
where $\rho:\R\rightarrow \R$ is continuous. Moreover, because of the twist condition, $\rho$ is an increasing homeomorphism of $\R$. 

\subsubsection{Proof that $\rho:\R\rightarrow \R$ is a biLipschitz homeomorphism}

\begin{proposition}\label{rho}
Assume that the $C^1$ symplectic twist diffeomorphism  $f:\A\rightarrow \A$ has an invariant  locally Lipschitz continuous  foliation into graphs $c\in\R\mapsto  \eta_c\in C^{0}(\T, \R)$. Then the map $\rho: c\in\R\mapsto \rho(c)$ is a locally biLipschitz homeomorphism.
\end{proposition}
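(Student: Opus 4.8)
The plan is to show that $\rho$ is locally Lipschitz, and that its inverse is too, by exploiting the twist condition of $f$ together with the uniform biLipschitz control on the foliation. Fix a compact interval $[a,b]$ of $c$'s and work on the corresponding compact invariant region $\ck=\{(\theta,\eta_c(\theta));\theta\in\T,c\in[a,b]\}$. The rotation number $\rho(c)$ is computed from the lifted projected dynamics $\tilde g_c(\theta)=\pi_1\circ F(\theta,\eta_c(\theta))$. Since $F_1$ is $C^1$ and the foliation is $K$-biLipschitz in $c$ on $\ck$, the twist condition gives, for $c_1<c_2$ in $[a,b]$ and any $\theta$,
\begin{equation*}
\frac{m}{K}\,(c_2-c_1)\le \tilde g_{c_2}(\theta)-\tilde g_{c_1}(\theta)\le M K\,(c_2-c_1),
\end{equation*}
where $0<m\le \partial_r F_1\le M$ on a suitable compact neighbourhood of $\ck$ (the lower bound because $r\mapsto F_1(\theta,r)$ is an increasing diffeomorphism with $C^1$ inverse, so its derivative is bounded away from $0$ on compacta; the upper bound by continuity of $DF$). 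In words: moving from leaf $c_1$ to leaf $c_2$ shifts the projected dynamics uniformly by an amount comparable to $c_2-c_1$.

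Next I would pass from this one-step displacement estimate to the rotation numbers. Writing $\tilde g_c^{\,n}$ for the $n$-th iterate, one has $\rho(c)=\lim_n \frac1n(\tilde g_c^{\,n}(\theta)-\theta)$, uniformly in $\theta$. Because each $\tilde g_c$ is an orientation-preserving circle-map lift (monotone and commuting with integer translation), the inequality $\tilde g_{c_1}(\theta)+\frac{m}{K}(c_2-c_1)\le \tilde g_{c_2}(\theta)\le \tilde g_{c_1}(\theta)+MK(c_2-c_1)$ propagates under iteration by the standard monotonicity argument for circle homeomorphisms: iterating the lower bound $n$ times gives $\tilde g_{c_2}^{\,n}(\theta)\ge \tilde g_{c_1}^{\,n}(\theta)+n\frac{m}{K}(c_2-c_1)$, and similarly for the upper bound. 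Dividing by $n$ and letting $n\to\infty$ yields
\begin{equation*}
\frac{m}{K}\,(c_2-c_1)\le \rho(c_2)-\rho(c_1)\le MK\,(c_2-c_1),
\end{equation*}
which is exactly the statement that $\rho$ is biLipschitz on $[a,b]$, with constants depending only on the twist bounds and the biLipschitz constant of the foliation over $\ck$. Since $\rho$ is already known to be an increasing homeomorphism of $\R$, the two-sided linear bound gives that $\rho^{-1}$ is Lipschitz on compacta as well, so $\rho$ is a locally biLipschitz homeomorphism.

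The main obstacle I anticipate is the propagation-under-iteration step, specifically making sure the additive comparison between $\tilde g_{c_1}^{\,n}$ and $\tilde g_{c_2}^{\,n}$ is genuinely valid and uniform in $\theta$. One must be slightly careful: from $\tilde g_{c_2}\ge \tilde g_{c_1}+\delta$ (with $\delta=\frac{m}{K}(c_2-c_1)$) and monotonicity of $\tilde g_{c_1}$ one gets $\tilde g_{c_2}^{\,2}(\theta)=\tilde g_{c_2}(\tilde g_{c_2}(\theta))\ge \tilde g_{c_1}(\tilde g_{c_2}(\theta))+\delta\ge \tilde g_{c_1}(\tilde g_{c_1}(\theta)+\delta)+\delta$, and here one needs $\tilde g_{c_1}(x+\delta)\ge \tilde g_{c_1}(x)$, which is just monotonicity — so actually only $\tilde g_{c_2}^{\,n}(\theta)\ge \tilde g_{c_1}^{\,n}(\theta)+\delta$ comes for free, not $+n\delta$. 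To recover the factor $n$ one should instead use that $\tilde g_c$ commutes with integer translations and compare over one full period, or better: apply the one-step estimate along the orbit, $\tilde g_{c_2}^{\,n}(\theta)-\tilde g_{c_1}^{\,n}(\theta)=\sum_{k=0}^{n-1}\big(\tilde g_{c_2}(\tilde g_{c_2}^{\,k}(\theta))-\tilde g_{c_1}(\tilde g_{c_2}^{\,k}(\theta))\big)+\sum_{k=0}^{n-1}\big(\tilde g_{c_1}(\tilde g_{c_2}^{\,k}(\theta))-\tilde g_{c_1}(\tilde g_{c_1}^{\,k}(\theta))\big)$, where the first sum is $\ge n\delta$ by the one-step bound and the second is $\ge 0$ by monotonicity (since $\tilde g_{c_2}^{\,k}\ge \tilde g_{c_1}^{\,k}$, proved inductively). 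This telescoping is the real content; once it is in place, dividing by $n$ and taking limits is immediate and everything else is bookkeeping with the twist bounds.
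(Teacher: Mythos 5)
Your one-step estimate
$$\tfrac{m}{K}\,(c_2-c_1)\le \tilde g_{c_2}(\theta)-\tilde g_{c_1}(\theta)\le MK\,(c_2-c_1)$$
is correct and is indeed the same geometric input the paper extracts from the twist condition and the Lipschitz foliation. But the passage from this pointwise comparison of $\tilde g_{c_1}$ and $\tilde g_{c_2}$ to the same comparison of their rotation numbers is where your argument breaks, for two reasons.

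First, the telescoping identity you write is simply false. Combining the two sums, the right-hand side equals
$$\sum_{k=0}^{n-1}\bigl(\tilde g_{c_2}^{k+1}(\theta)-\tilde g_{c_1}^{k+1}(\theta)\bigr)=\sum_{m=1}^{n}\bigl(\tilde g_{c_2}^{m}(\theta)-\tilde g_{c_1}^{m}(\theta)\bigr),$$
which is not $\tilde g_{c_2}^{n}(\theta)-\tilde g_{c_1}^{n}(\theta)$. A correct hybrid/staircase telescoping is
$$\tilde g_{c_2}^{n}(\theta)-\tilde g_{c_1}^{n}(\theta)=\sum_{k=0}^{n-1}\Bigl(\tilde g_{c_1}^{\,n-1-k}\circ\tilde g_{c_2}\bigl(\tilde g_{c_2}^{k}(\theta)\bigr)-\tilde g_{c_1}^{\,n-1-k}\circ\tilde g_{c_1}\bigl(\tilde g_{c_2}^{k}(\theta)\bigr)\Bigr),$$
and here each summand is $\tilde g_{c_1}^{\,n-1-k}$ applied to two points that differ by at least $\delta=\tfrac{m}{K}(c_2-c_1)$; monotonicity alone only gives that each term is $\ge 0$, not $\ge\delta$, so no factor of $n$ comes out for free.

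Second, and more fundamentally: even with a correct telescoping, the statement you are trying to prove by iteration alone is false for general circle homeomorphisms. If $f\le g+d$ one only gets $\rho(f)\le\rho(g)$; the rotation number exhibits devil's-staircase behaviour (mode locking at rationals), so a uniform additive gap between two circle maps does not yield a proportional gap in rotation numbers. The missing ingredient, which is precisely the content of the paper's Lemma~\ref{lemme rho}, is that under the Lipschitz-integrability hypothesis each $g_c$ is conjugated to a rotation by an equi-biLipschitz homeomorphism $h_c$. Conjugating by $h_{c_1}$ turns $\tilde g_{c_1}$ into a translation $t_\alpha$, whose iterates are isometries, and the $K$-biLipschitz bound on $h_{c_1}$ controls the distortion of the gap $\delta$; only then does the telescoping give $\tilde g_{c_2}^{n}-\tilde g_{c_1}^{n}\ge n\,\delta/\widetilde K^2$ and hence $\rho(c_2)-\rho(c_1)\ge\delta/\widetilde K^2$. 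Your proposal never invokes these conjugacies, so it cannot close.
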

We will use the following
\begin{lemma}\label{lemme rho}
Let $f,g : \R \to \R$ be lifts of homeomorphisms of $\T$ that preserve orientation (implying $f(\cdot +1) = f(\cdot) +1$ and $g(\cdot +1) = g(\cdot) +1$). Assume that 
\begin{itemize}
\item either $f$ or $g$ is conjugated to a translation $t_\alpha : x \mapsto x+\alpha$ by a homeomorphism $h$ that is a lift of a homeomorphism of $\T$ that preserves orientation;
\item $h$ and $h^{-1}$ are $K$-Lipschitz.
\end{itemize}
Then
\begin{enumerate}
\item If there exists $d>0$ such that $f<g+d$, then
 $\rho(f)\leq \rho(g)+Kd$.
 \item If there exists $d>0$ such that $f+d <g$ 
 then $\rho(f) +\frac{d}{K}\leq \rho(g)$.
 \end{enumerate}
\end{lemma}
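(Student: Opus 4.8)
The statement compares rotation numbers of two circle homeomorphism lifts $f,g$ under the hypothesis that one of them, say $f$, is conjugate to a translation $t_\alpha$ via a bi-$K$-Lipschitz lift $h$, and that $f<g+d$ (resp.\ $f+d<g$) pointwise. Since $\rho(f)=\alpha$ here, the key point is to transport the inequality between $f$ and $g$ through the conjugacy $h$ and read off a comparison of rotation numbers from the translation model. I would first recall the standard monotonicity and conjugacy-invariance facts for rotation numbers: if $F\le G$ pointwise (for lifts of orientation-preserving circle homeomorphisms), then $\rho(F)\le\rho(G)$; if $F\le G+d$ then $\rho(F)\le\rho(G)+d$ (apply monotonicity to $F$ and $G+d$, and use $\rho(G+d)=\rho(G)+d$); and $\rho(hFh^{-1})=\rho(F)$ for any lift $h$ of an orientation-preserving homeomorphism. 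These are classical (e.g.\ from Herman or Katok--Hasselblatt) and I would cite them rather than reprove them.

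\textbf{Step 1: conjugate $g$.} Set $\tilde g = h g h^{-1}$, which is again the lift of an orientation-preserving homeomorphism of $\T$, with $\rho(\tilde g)=\rho(g)$. Since $h f h^{-1}=t_\alpha$ and $\rho$ is conjugacy-invariant, it suffices to compare $t_\alpha=h f h^{-1}$ with $\tilde g = h g h^{-1}$.

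\textbf{Step 2: transport the inequality.} From $f<g+d$ I want to deduce an inequality between $hfh^{-1}$ and $hgh^{-1}$ with a controlled constant. Apply $h$, which is increasing: $f(x)<g(x)+d$ gives, after substituting $x=h^{-1}(y)$, that $f(h^{-1}(y))<g(h^{-1}(y))+d$, hence $h(f(h^{-1}(y))) < h\bigl(g(h^{-1}(y))+d\bigr)$. Now use that $h$ is $K$-Lipschitz and increasing: $h(s+d)\le h(s)+Kd$ for $d>0$. Therefore $h f h^{-1}(y) < h g h^{-1}(y) + Kd$, i.e.\ $t_\alpha < \tilde g + Kd$. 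By monotonicity of $\rho$ and the translation formula, $\alpha=\rho(t_\alpha)\le \rho(\tilde g)+Kd = \rho(g)+Kd$, and since $\rho(f)=\alpha$ this is exactly part (1). For part (2), from $f+d<g$ the same substitution gives $g(h^{-1}(y))>f(h^{-1}(y))+d$, and applying $h$ together with $h(s+d)\ge h(s)+d/K$ (which is where $h^{-1}$ being $K$-Lipschitz enters: $h(s+d)-h(s)\ge d/K$ because $h^{-1}$ is $K$-Lipschitz) yields $\tilde g(y) > t_\alpha(y)+d/K$, whence $\rho(g)=\rho(\tilde g)\ge \alpha + d/K = \rho(f)+d/K$. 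If instead it is $g$ (rather than $f$) that is conjugate to a translation, the roles are symmetric and the same computation applies with $f$ and $g$ swapped and the inequalities reversed; I would remark on this briefly rather than rewrite it.

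\textbf{Main obstacle.} There is no deep difficulty; the one point requiring care is the pair of elementary Lipschitz estimates $h(s+d)\le h(s)+Kd$ and $h(s+d)\ge h(s)+d/K$, i.e.\ making sure the ``$K$'' is used on $h$ in one inequality and on $h^{-1}$ in the other. Concretely, $h$ $K$-Lipschitz gives $h(s+d)-h(s)\le Kd$; and $h^{-1}$ $K$-Lipschitz applied to the two points $h(s+d), h(s)$ gives $d = h^{-1}(h(s+d))-h^{-1}(h(s)) \le K\bigl(h(s+d)-h(s)\bigr)$, i.e.\ $h(s+d)-h(s)\ge d/K$. With both increasing. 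Once this is in hand the proof is a two-line substitution plus the three standard rotation-number facts. I would also note in passing that the hypothesis $f<g+d$ (strict) is only used to get $\le$ in the conclusion, which is consistent with the statement, and that the strictness is harmless.
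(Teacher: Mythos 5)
Your proof is correct and follows essentially the same route as the paper's: conjugate both maps by $h$, use the two elementary Lipschitz estimates $h(s+d)\le h(s)+Kd$ and $h(s+d)\ge h(s)+d/K$ to transport the inequality, and then compare the resulting translation to the conjugate of the other map via monotonicity and conjugacy-invariance of the rotation number. The only cosmetic difference is that you take $f$ to be the map conjugate to a translation while the paper takes $g$, but as you note the two cases are handled identically.
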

\begin{proof}
Let us say that $h\circ g \circ h^{-1} = t_\alpha$, hence $\rho(g) = \alpha$ (the proof when $f$ is conjugated to a translation is the same). 
\begin{enumerate}
\item 
By hypothesis, $ f\circ h^{-1}< g\circ h^{-1} +d $. Using that $h$ is increasing and  $K$-Lipschitz, it follows that for all $ x\in \R$,
$$ \quad h\circ f\circ h^{-1}(x)<h(g\circ h^{-1}(x) +d)< h\circ g\circ h^{-1}(x)+Kd=x+\alpha+Kd.$$
Finally, as $\rho(f) = \rho(  h\circ f\circ h^{-1})$, we conclude that 
$$\rho(f)\leq \alpha+Kd = \rho(g)+Kd.$$
\item By hypothesis, $ f\circ h^{-1}+d< g\circ h^{-1}  $. Using that $h$ is increasing, it follows that
$$\forall x\in \R, \quad h( f\circ h^{-1}(x)+d)<h\circ g\circ h^{-1}(x)=x+\alpha.$$
Because $h^{-1}$ is $K$-Lipschitz and increasing, observe that
\begin{multline*}
d=h^{-1}(h(f\circ h^{-1}(x)+d))-h^{-1}(h\circ f\circ h^{-1}(x))\\
\leq K\left( h(f\circ h^{-1}(x)+d)-h\circ f\circ h^{-1}(x)\right).
\end{multline*}
Then
$$h\circ f\circ h^{-1}(x)\leq h(f\circ h^{-1}(x)+d)-\frac{d}{K}<x+\alpha -\frac{d}{K};$$
hence $\rho(f)+\frac{d}{K}\leq \rho(g)$.

\end{enumerate}
\end{proof}

\begin{proof}[Proof of Proposition \ref{rho}]
The proof is now a direct application of the previous Lemma. Indeed, we have seen that when the foliation is $K$-Lipschitz, if $c$ varies in a compact  set $\mathcal K$, the Dynamics $g_c$ are all conjugated to rotations. We have moreover proven there exists a constant $\widetilde K$ such that the conjugating functions $h_c$ may be chosen equi-biLipschitz (for $c\in \mathcal K$).

 We denote the minimum and maximum torsions on $\ck$ by
$$b_{\rm min}=\min_{x\in\ck}\frac{\partial f_1}{\partial \theta}(x)\quad\text{and}\quad b_{\rm max}=\max_{x\in\ck}\frac{\partial f_1}{\partial \theta}(x).$$
For $c_1<c_2$ in $[a, b]$, we have

$$\tilde g_{c_2}(\theta)-\tilde g_{c_1}(\theta)=F_1\big(\theta, \eta_{c_2}(\theta)\big)-F_1\big(\theta, \eta_{c_1}(\theta)\big)$$

and so 
$$\tilde g_{c_2}(\theta)-\tilde g_{c_1}(\theta)\in \Big[b_{\rm min}\big(\eta_{c_2}(\theta)-\eta_{c_1}(\theta)\big), b_{\rm max}\big(\eta_{c_2}(\theta)-\eta_{c_1}(\theta)\big)\Big]$$

and

$$\tilde g_{c_2}(\theta)-\tilde g_{c_1}(\theta)\in \Big[\frac{b_{\rm min}}{K}(c_2-c_1), K.{b_{\rm max}}.(c_2-c_1)\Big].$$

We deduce from  Lemma \ref{lemme rho} that
$${K.\tilde K}.b_{\rm max}(c_2-c_1)\geq \rho(g_{c_2})-\rho(g_{c_1})\geq \frac{b_{\rm min}}{K.\tilde K}(c_2-c_1).$$
\end{proof}
\subsubsection{Proof of the $C^1$ regularity} 
Here we prove that $\Phi:\A\rightarrow \A$    is $C^1$ in the $\theta$ variable, that the invariant foliation is a $C^1$ lamination    and that the Dynamics restricted to every leaf is $C^1$ conjugated to a rotation.

Let us fix $c$. Then $h_c=Id_\T+\frac{\partial u}{\partial c}(., c)$ is a biLipschitz homeomorphism of $\T$ by Proposition \ref{PLip}. Then Corollary 4 of \cite{Arna1}  tells us that $\eta_c$ is in fact $C^1$ (and the two Green bundles coincide along its graphs) and that $h_c$ is a $C^1$ diffeomorphism.\\
Hence all the points of $\A$ are recurrent. Moreover, as the two Green bundles are equal everywhere, they are continuous. Because they coincide with the tangent space to the foliation, the foliation is a $C^1$ lamination. This is equivalent to the continuity (in the two variables) of $\frac{\partial^2u}{\partial   \theta^2}$.\\
As $\Phi(\Theta, c)=\big(h_c^{-1}(\Theta), \eta_c\circ h_c^{-1}(\Theta)\big)$,  we deduce that $\Phi$ is $C^1$ in the $\theta$-direction.

\remk we don't know if $\frac{\partial^2u}{\partial \theta\partial c}$ is continuous, and then if  $c\mapsto h_c$ is continuous for the $C^1$ topology.

\section{A strange foliation}\label{sstrange}
We consider the  foliation of $\A$ by the graphs of $\eta_c(\theta) =  c+ \varepsilon(c)\cos(2\pi \theta)$ where $\varepsilon$ is a contraction ($k$-Lipschitz with $k<1$) that is not everywhere differentiable. It is a biLipschitz foliation with smooth leaves. Observe that the generating function of this foliation is given by 
$$u(\theta, c)=\frac{\varepsilon(c)}{2\pi}\sin(2\pi \theta).$$

 \subsection {Proof of Corollaries \ref{Corstrangefolia} and \ref{Lipnotstraighten}}
 As $u$ is not $C^1$, we deduce from  Theorems \ref{Tgeneder} and \ref{TC0arn} that this foliation cannot be globally straightenable by a symplectic homeomorphism and also that it cannot be invariant by a symplectic twist diffeomorphism.
 
 Let us prove the local part of Corollary \ref{Lipnotstraighten}. Then  we work in $\cw=[\alpha, \beta]\times I$ and we define $U$ by
 $$U(\theta, c)=u(\theta, c)-\frac{u(\beta, c)-u(\alpha, c)}{\beta-\alpha}.$$
 Then $U$ is not $C^1$ and we deduce from Theorem \ref{TC0arn} that the local foliation is not straightenable via a symplectic homeomorphism.

\subsection{An exact symplectic twist map that leaves the strange foliation invariant} Let us prove however that this foliation, for a simple choice of $\varepsilon$, can be  invariant by a certain $C^1$ exact symplectic twist map. 
 \begin{defi} An exact symplectic homeomorphism $f:\A\rightarrow \A$ has the  {\em weak  twist property} if
when $F=(F_1, F_2):\R^2\rightarrow \R^2$ is any lift of $f$, for any $\tilde \theta\in \R$, the map $r\in \R\mapsto F_1(\tilde \theta, r)\in \R$ is an increasing homeomorphism from $\R$ onto $\R$.
 \end{defi}
 
 Let us now assume that $\varepsilon$ is a $C^2$ function away from $c=0$ and that at $0$ it has a left and a right derivatives up to order $2$. For the sake of simplicity, let us assume also that $\varepsilon(0)=0$ so that $\T\times \{0\}$ is a leaf of the foliation and that $\varepsilon$ restricted to $[0,+\infty)$ (resp. $(-\infty , 0]$) is the restriction of a $C^2$ periodic function.

 The proof of Theorem \ref{bolle} gives us two $C^1$ functions 
 $$\Phi^\pm : (\theta,r ) \mapsto \big(h^\pm(\theta,r) , \eta( h^\pm(\theta,r), r) \big)$$
 where $\Phi^+$ is a $C^1$ exact symplectic diffeomorphism of $\A^+ = \T \times [0,+\infty)$ to itself (up to the boundary) and $\Phi_-$ is a $C^1$ exact symplectic diffeomorphism of $\A^- = \T \times (-\infty, 0]$ to itself (up to the boundary). Note that here $\Phi^+$ and $\Phi^-$ do not coincide on $\T\times \{0\}$ explaining why the foliation is not straightenable.
 
 Let $\rho : \R \to \R$ be an increasing, $C^1$ homeomorphism such that $\rho(0) = \rho'(0) = 0$. We denote by $f_\rho : (\theta,r) \mapsto (\theta+\rho(r) , r)$. The function $f = \Phi^\pm \circ f_\rho \circ (\Phi^\pm)^{-1}$ is well defined on $\A$, it is the identity on $\T \times \{0\}$. It is clearly an area preserving homeomorphism that is $C^1$ away from $\T\times \{0\}$. 
 
 If $r> 0$ and $\theta\in \T$, let us set $(\Theta,R) = \Phi^+(\theta,r)   $. Then one finds that
 \begin{multline*}
 D f(\Theta,R) = D\Phi^+(\theta + \rho(r),r) \cdot Df_\rho ( \theta,r)\cdot D\Phi^+(\theta,r)^{-1} \\
 =      D\Phi^+(\theta + \rho(r),r)  \cdot \begin{pmatrix}1&\rho'(r)\\0&1
\end{pmatrix} \cdot D\Phi^+(\theta,r)^{-1}
 \end{multline*}
 
 It follows from the properties on $\Phi^+$ and $\rho(0)=\rho'(0) = 0$ that as $R\to 0$, $Df(\Theta,R)$ uniformly converges to the identity. As the same holds for $R<0$, we deduce that $f$ is in fact $C^1$ with a differential on $\T \times \{0\}$ being identity. 
 
 It is left to chose $\rho$ in such a way that the obtained map is a twist map. We construct it on $[0,+\infty)$. The twist condition we aim at is: for every $\Theta\in\R$, the map $r\mapsto h^+\big(\left(h^{+}_r\right)^{-1}(\Theta)+\rho(r),r\big)$ is an increasing homeomorphism of $\R$.
 
 After computation, if we denote $h^+(\theta_r, r)=\Theta$, the derivative of the above function is the following for $r>0$ (the inequality is our goal): 
 
 \begin{multline*}
 \frac{\partial h^+}{\partial r}(\theta_r+\rho(r),r)-\left( \frac{\partial h^+}{\partial \theta}(\theta_r,r)\right)^{-1}\frac{\partial h^+}{\partial \theta}(\theta_r+\rho(r),r)\frac{\partial h^+}{\partial r}(\theta_r,r) \\
 + \frac{\partial h^+}{\partial \theta}(\theta_r+\rho(r),r)\rho'(r)>0.
 \end{multline*}
 The first line above is smaller in absolute value than $M_1\rho(r)$ where (recall that by hypothesis, all the functions at play are continuous periodic hence bounded)
  $$M_1 = \left \|\frac{\partial^2 h^+}{\partial r\partial \theta}\right \|_\infty +\left  \|\left(\frac{\partial h^+}{\partial \theta}\right)^{-1}\right \|_\infty . \left  \|\frac{\partial^2 h^+}{\partial \theta^2}\right \|_\infty  \left \|\frac{\partial h^+}{\partial r}\right \|_\infty .$$

 
 On the other hand, the second line is greater than $M_2 \rho'(r)$ where we set $ M_2 =  \min \frac{\partial h^+}{\partial \theta} >0$. 
   If $\rho(t) = t^2 e^{Mt}$ with $M= 2M_1/M_2$, then we have $\rho(0)=\rho'(0)=0$ and $\rho'(t)>\frac{M_1}{M_2}\rho(t)$ that implies the twist condition.

 \appendix
 
 \section{A  foliation by graphs that is the inverse image of the standard foliation by a symplectic map but not by a symplectic  homeomorphism}\label{AppA}
 We will use two special functions
 \begin{itemize}
 \item $\gamma:\T\rightarrow \R$ a $C^\infty$ function such that $\gamma'_{[\frac{1}{2}-\varepsilon, \frac{1}{2}+\varepsilon]}=-1$ and 
 $\gamma'_{\T\backslash [\frac{1}{2}-\varepsilon, \frac{1}{2}+\varepsilon]}>-1$;
 \item $\zeta:\R\rightarrow \R$ a $C^\infty$ function that is increasing, such that $\zeta'(0)=1$ and $\zeta'_{\R\backslash\{ 0\}}<1$ with $\displaystyle{\lim_{\pm\infty}\zeta'=\frac{1}{2}}$.
 \end{itemize}
 The function $u(\theta, c)=\zeta(c)\gamma(\theta)$ defines the foliation in graphs of $$\eta_c=c+\frac{\partial u}{\partial\theta}=c+\zeta(c)\gamma'.$$
 The derivative with respect to $c$ of $\eta_c(\theta)$ is then $\frac{\partial \eta_c}{\partial c}(\theta)=1+\zeta'(c)\gamma'(\theta)$ that is non negative, vanishes only for $(\theta, c)\in [\frac{1}{2}-\varepsilon, \frac{1}{2}+\varepsilon]\times \{0\}$ and is larger that $\frac{1}{3}$ close to $\pm\infty$. Hence every map $c\in\R\mapsto \eta_c(\theta)\in\R$ is a homeomorphism and we have indeed a $C^0$ foliation.\\
 Let us introduce $h_c(\theta)=\theta+\frac{\partial u}{\partial c}(\theta)=\theta+\gamma(\theta)\zeta'(c)$. Its derivative is $1+\zeta'(c)\gamma'(\theta)$  that is non negative and vanishes only if $(\theta, c)\in [\frac{1}{2}-\varepsilon, \frac{1}{2}+\varepsilon]\times \{0\}$. Hence $h_0$ is not a homeomorphism but all the other $h_c$ are homeomorphisms. \\
 We deduce from Theorem \ref{TC0arn} that this foliation is not symplectically homeomorphic to the standard one.
 
 We will now prove that the map defined by $H\big(\theta, \eta_c(\theta)\big)=(h_c(\theta), c)$ is a symplectic map, i.e. the limit (for the $C^0$ topology) of 
a sequence  of symplectic diffeomorphisms.\\
Let $\gamma_n:\T\rightarrow \R$ be a sequence of $C^\infty$ maps that converges to $\gamma$ in $C^1$ topology and satisfies $\gamma_n'>-1$. Let $(\zeta_n)$ be a sequence of $C^\infty$ diffeomorphisms of $\R$ that $C^1$ converges to $\zeta$ and satisfies $\zeta_n'<1$. We introduce $u_n(\theta, c)=\gamma_n(\theta)\zeta_n(c)$. Then $\eta_{c,n}(\theta)=c+\zeta_n(c)\gamma_n'(\theta)$ defines a smooth foliation, $h_{c,n}(\theta)=\theta+\gamma_n(\theta)\zeta_n'(c)$ is a smooth diffeomorphism of $\T$ and
$$K_n(\theta, c)=\left( \left( h_{c,n}\right)^{-1}(\theta), \eta_{c,n}\left(\big( h_{c,n}\right)^{-1}(\theta)\big)\right)$$
is a symplectic smooth diffeomorphism that maps the standard foliation  to the foliations by the graphs of $\left( \eta_{c,n}\right)_{c\in\R}$.\\
If $H_n=K_n^{-1}$, observe that $H_n=G_n\circ F_n^{-1}$ where 
\begin{itemize}
\item $F_n(\theta, c)=\big(\theta, c+\frac{\partial u_n}{\partial \theta}(\theta, c)\big)$ converges uniformly to $F(\theta, c)=\big(\theta, c+\frac{\partial u}{\partial \theta}(\theta, c)\big)$;
\item $G_n(\theta, c)=(\theta+\frac{\partial u_n}{\partial c}(\theta, c), c)$ converges uniformly to $G(\theta, c)=(\theta+\frac{\partial u}{\partial c}(\theta, c), c)$.
\end{itemize}
Finally, $H_n=G_n\circ F_n^{-1}$ converges uniformly to $H=G\circ F^{-1}$

 \section{Green bundles}\label{ssGreenb} Here we recall the theory of Green bundles. More details or proofs can be found in \cite{Arna3,Arna1}.
We fix a lift $F$ of an symplectic twist diffeomorphism $f$.  
\begin{notas}\label{Nota}{\rm
\begin{enumerate}
\item[$\bullet$] $V(x)=\{ 0\}\times\R\subset T_x\R^2$ and for $k\not= 0$, we have $G_k(x)=DF^k(F^{-k}x)V(f^{-k}x)$;
\item[$\bullet$] the slope of $G_k$ (when defined) is denoted by $s_k$: $$G_k(x)=\{ (\delta\theta, s_k(x)\delta\theta); \ \ \delta\theta\in\R\};$$
\item[$\bullet$] if $\gamma$ is a real Lipschitz function defined on $\T$ or $\R$, then 
$$\gamma'_+(x)=\limsup_{\substack{y,z\rightarrow x\\ y\not=z}}\frac{\gamma(y)-\gamma(z)}{y-z}\quad{\rm and}\quad \gamma'_-(t)=\liminf_{\substack{y,z\rightarrow x\\ y\not=z}}\frac{\gamma(y)-\gamma(z)}{y-z}.$$
\end{enumerate}}
\end{notas}
Then  
\begin{enumerate}
\item if the orbit of $x\in\R^2$ is minimizing,  we have 
$$\forall n\geq 1,\quad s_{-n}(x)<s_{-n-1}(x)<s_{n+1}(x)<s_n(x);$$
\item in this case,   the two {\em Green bundles} at $x$ are $G_+(x), G_-(x)\subset T_x(\R^2)$ with slopes $s_-$, $s_+$ where $\displaystyle{s_+(x)=\lim_{n\rightarrow +\infty}s_n(x)}$ and $\displaystyle{ s_-(x)=\lim_{n\rightarrow +\infty}s_{-n}(x)}$;
\item the two Green bundles  are invariant under $Df$: $Df(G_\pm)=G_\pm\circ f$;
\item we have $s_+\geq s_-$;
\item the map $s_-$ is lower semi-continuous and the map  $s_+$ is upper semi-continuous;
\item hence $\{ G_-=G_+\}$ is a  $G_\delta$ subset of the set of points whose orbit is minimizing (this last set  is a closed set) and   $s_-=s_+$ is continuous at every point of this set.
\end{enumerate}

Let us focus on the case of an invariant curve that is the graph of $\gamma$. Then we have
\begin{propos}\label{PGreensand}
Assume that the graph of $\gamma\in C^0(\T, \R)$ is invariant by $F$. Then the orbit of any point contained in the graph of $\gamma$ is minimizing and we have
$$\forall \theta\in\T,\quad s_-\big(\theta, \gamma(\theta)\big)\leq\gamma'_-(\theta)\leq \gamma'_+(\theta)\leq s_+\big(\theta, \gamma(\theta)\big).$$
\end{propos}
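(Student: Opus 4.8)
The plan is to recall that the dynamical variational principle for twist maps makes every point of an invariant graph a minimizer, and then to squeeze the ``derivative cone'' of $\gamma$ between the iterated slopes $s_{n}$ and $s_{-n}$ and pass to the limit. First I would recall the well-known fact (Birkhoff, see \cite{Arna1,Arna3}) that an invariant continuous graph of a twist map is automatically Lipschitz, so that $\gamma'_-(\theta)$ and $\gamma'_+(\theta)$ are finite everywhere and the tangent cone to the graph at $\big(\theta,\gamma(\theta)\big)$ is well defined. Next I would invoke the standard criterion that orbits lying on an invariant Lipschitz graph of an exact twist map are action-minimizing: the graph of $\gamma$, being invariant, carries an invariant measure whose rotation number equals the rotation number of the graph, and minimality of orbits on invariant graphs for twist maps is classical — this is exactly the hypothesis under which the Green bundles $G_\pm$ are defined at every point of the graph (items (1)--(2) of the Green bundle appendix).

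Once minimality is established, the core of the proof is the comparison. For a fixed $n\ge 1$, I would compare the image of a small vertical segment at $F^{-n}\big(\theta,\gamma(\theta)\big)$, which after applying $DF^n$ spans $G_n$, with nearby secants of the graph. The key point is a monotonicity/convexity argument: because the generating function of $f^n$ satisfies a twist (i.e.\ positive torsion) condition, for two distinct points $x_0,x_1$ of the invariant graph the segment of the graph between them lies, in the relevant coordinates, below the ``forward cone'' direction $G_n$ and above the ``backward cone'' direction $G_{-n}$; concretely, the slope of any secant $\frac{\gamma(y)-\gamma(z)}{y-z}$ is trapped in $[s_{-n}(\cdot),s_n(\cdot)]$ up to an error that vanishes as $y,z\to\theta$. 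This is the standard ``Green bundle sandwich'' estimate: $s_{-n}\big(\theta,\gamma(\theta)\big)\le \gamma'_-(\theta)\le\gamma'_+(\theta)\le s_n\big(\theta,\gamma(\theta)\big)$ for every $n\ge 1$. I would carry this out by writing the minimization inequality satisfied by the orbit (the discrete Euler--Lagrange inequality) and differentiating, or equivalently by the geometric picture that an action-minimizing orbit cannot have its graph cross the stable/unstable-type cones $G_{\pm n}$.

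Finally, letting $n\to+\infty$ and using $s_n\big(\theta,\gamma(\theta)\big)\to s_+\big(\theta,\gamma(\theta)\big)$ and $s_{-n}\big(\theta,\gamma(\theta)\big)\to s_-\big(\theta,\gamma(\theta)\big)$ (items (2) and the monotonicity in (1) of the appendix) yields
$$s_-\big(\theta,\gamma(\theta)\big)\le \gamma'_-(\theta)\le \gamma'_+(\theta)\le s_+\big(\theta,\gamma(\theta)\big),$$
which is the claim. The main obstacle is the comparison step itself: one must justify cleanly, using only Lipschitz regularity of $\gamma$ and the twist property of $f^n$ (or of $f$ composed $n$ times), that every secant slope of the invariant graph is pinched between $s_{-n}$ and $s_n$; all the rest is recalling definitions and taking limits. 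I expect this to follow from the variational characterization together with the positivity of torsion exactly as in \cite{Arna1}, so I would cite that argument rather than reprove the minimization inequality from scratch.
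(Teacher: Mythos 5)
The paper does not actually prove Proposition~\ref{PGreensand}: Appendix~\ref{ssGreenb} only recalls Green bundle theory and refers to \cite{Arna3,Arna1} for proofs, treating the sandwich inequality as a known fact. Your outline reconstructs the standard argument from those references — Birkhoff's Lipschitz estimate, classical minimality of orbits lying on an invariant graph of a twist map, the finite-$n$ pinching $s_{-n}\le \gamma'_-\le\gamma'_+\le s_n$, and then letting $n\to\infty$ — so there is no divergence in approach to report, only a comparison against the cited literature.

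One imprecision to flag: you justify the finite-$n$ pinching by saying ``the generating function of $f^n$ satisfies a twist (positive torsion) condition.'' This is false in general; the paper itself notes after Proposition~\ref{iterateTwist} that an iterated twist map need not be a twist map. What actually makes the argument go through is weaker and comes from minimality: once the orbit is known to be minimizing, the appendix gives you $s_{-n}(x)<s_{-n-1}(x)<s_{n+1}(x)<s_n(x)$ for free, so the $n$-th order cone field is well ordered along the orbit without needing global twist of $f^n$. The geometric sandwich is then the $n=1$ argument iterated in the correct coordinates: the vertical through $F^{-n}(x)$ is split by the graph into an upper and a lower ray; $F^n$ sends the graph to itself and preserves the two sides of its complement; because the orbit is minimizing, $F^n(V)$ is locally a graph near $x$ with slope $s_n(x)$ there; and it lies weakly above (resp.\ below) the invariant graph to the right (resp.\ left) of $x$, forcing $\gamma'_+(\theta)\le s_n(x)$, with the lower bound by the time-reversed argument. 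Alternatively you can reach the same conclusion through the dynamical criterion (Proposition~\ref{Pdyncrit}) applied to the tangent cone directions. With that correction, your proof is essentially the one in the cited papers.
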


\begin{propos}\label{Pdyncrit} {\bf (Dynamical criterion)} Assume that $x$ has its orbit that is minimizing and that is contained in some strip $\R\times[-K,K]$ (for example $x$ is in some invariant graph) and that $v\in T_x\R^2\backslash\{ 0\}$. Then
\begin{enumerate}
\item[$\bullet$] if $\displaystyle{\liminf_{n\rightarrow +\infty} |D(\pi\circ F^n)(x)v|<+\infty}$, then $v\in G_-(x)$;
\item[$\bullet$] if $\displaystyle{\liminf_{n\rightarrow +\infty} |D(\pi\circ F^{-n})(x)v|<+\infty}$, then $v\in G_+(x)$.
\end{enumerate}
\end{propos}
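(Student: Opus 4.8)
The plan is to prove the first item; the second then follows verbatim for $F^{-1}$, whose Green bundles at $x$ satisfy $G^{F^{-1}}_\pm(x)=G_\mp(x)$ and for which $\pi\circ(F^{-1})^n=\pi\circ F^{-n}$. I would argue by contraposition: assuming $v\notin G_-(x)$, I show $|\pi\circ DF^n(x)v|\to+\infty$. First comes an elementary linear-algebra reduction. For $n\ge1$ the orbit of $x$ being minimizing forces $s_{-n}(x)\in\R$ (fact (1) of Appendix~\ref{ssGreenb}), so $G_{-n}(x)$ is transverse to $V(x)$ and one may write $v=v^{(n)}+t_n(0,1)$ with $v^{(n)}\in G_{-n}(x)$. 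Since $G_{-n}(x)=DF^{-n}(F^nx)\,V(F^nx)$, we have $DF^n(x)\,G_{-n}(x)=V(F^nx)$, so $\pi\circ DF^n(x)$ annihilates $v^{(n)}$ and
$$\pi\circ DF^n(x)v=t_n\,\beta_n,\qquad \beta_n:=\pi\circ DF^n(x)(0,1),\qquad t_n=\pi_2(v)-s_{-n}(x)\,\pi_1(v).$$
Because $s_{-n}(x)$ increases to $s_-(x)$, the sequence $t_n$ converges monotonically to $\pi_2(v)-s_-(x)\pi_1(v)$, which is nonzero precisely because $v\notin G_-(x)$; hence $|t_n|\ge\delta>0$ for $n$ large, and everything is reduced to proving $|\beta_n|\to+\infty$, i.e. that the horizontal component of the forward iterates of the vertical vector blows up.

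Next I would pass to the scalar Jacobi (discrete Sturm--Liouville) equation. Writing $DF^n(x)(0,1)=(\beta_n,\gamma_n)$, one has $DF^n(x)(0,1)\in G_n(F^nx)$ for $n\ge1$, so $\gamma_n=s_n(F^nx)\beta_n$; since along the orbit (confined to a fixed strip) $s_-\le s_n\le s_1$, with $s_1$ continuous, $s_-$ lower semicontinuous and both $\Z$-periodic, the slopes $s_n(F^nx)$ remain bounded, so $(\beta_n)$ is bounded iff $(DF^n(x)(0,1))$ is. With $b_n:=\tfrac{\partial F_1}{\partial r}(F^nx)>0$ the twist coefficient, $(\beta_n)$ solves a recurrence $b_n^{-1}\beta_{n+1}-q_n\beta_n+b_{n-1}^{-1}\beta_{n-1}=0$ with $\beta_0=0$, $\beta_1=b_0\ne0$, and minimality of the orbit is exactly the no-conjugate-points condition: $\beta_n\ne0$ for all $n\ge1$, and the solution $w_n$ directing $G_+$ (obtained as the limit of the solutions vanishing at $-m$) never vanishes. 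The discrete Wronskian $b_n^{-1}(\beta_nw_{n+1}-\beta_{n+1}w_n)$ is constant, equal at $n=0$ to $-1$; dividing by $w_nw_{n+1}$ and telescoping gives $\beta_N/w_N=\sum_{n=0}^{N-1}b_n/(w_nw_{n+1})$.

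It then remains to extract the growth of $\beta_N$ from this identity, distinguishing whether or not $w$ is the principal solution at $+\infty$ --- equivalently, whether $G_-(x)=G_+(x)$. When $G_-(x)\ne G_+(x)$, $w$ is non-principal at $+\infty$, the series converges to some $L\ne0$, so $\beta_N\sim L\,w_N$; when $G_-(x)=G_+(x)$, $w$ is principal at $+\infty$, Green's criterion gives $\sum_n b_n/(w_nw_{n+1})=+\infty$, so $|\beta_N/w_N|\to+\infty$. In either case the conclusion $|\beta_N|\to+\infty$ is then obtained by also using the constancy of the symplectic pairing $|\omega(DF^n(x)(0,1),\xi_n)|=1$ for a field $\xi$ directing $G_+$, which forces $|DF^n(x)(0,1)|\to+\infty$ --- hence $|\beta_n|\to+\infty$ --- whenever $|\xi_n|$ does not stay bounded below. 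The step I expect to be the main obstacle is precisely ruling out that $|w_N|$ decays to $0$ along a subsequence of times in the degenerate, non-uniformly-hyperbolic situations (in particular $G_-=G_+$ with vanishing Lyapunov exponents): this is where one must genuinely exploit that the orbit stays in a compact strip, and the careful argument is the one carried out by Arnaud in \cite{Arna1,Arna3}, to which I would refer for the remaining details.
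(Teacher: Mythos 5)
The paper does not actually prove Proposition~\ref{Pdyncrit}: Appendix~\ref{ssGreenb} is a recollection of Green bundle theory, and explicitly refers the reader to \cite{Arna3,Arna1} for details and proofs. So there is no internal argument to compare your attempt against; what follows is an assessment of the attempt on its own merits.

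Your linear-algebra reduction is clean and correct: decomposing $v=v^{(n)}+t_n(0,1)$ with $v^{(n)}\in G_{-n}(x)$, noting that $\pi\circ DF^n(x)$ kills $v^{(n)}$, and observing that $t_n=\pi_2(v)-s_{-n}(x)\pi_1(v)$ converges to the nonzero quantity $\pi_2(v)-s_-(x)\pi_1(v)$ (nonzero exactly because $v\notin G_-(x)$) correctly reduces the statement to showing $|\beta_n|\to+\infty$, where $\beta_n=\pi\circ DF^n(x)(0,1)$. The discrete Jacobi/Wronskian framework is also the right one. The gap is in the choice of comparison solution. You telescope against the solution $w_n$ directing $G_+$ and invoke the symplectic pairing with $G_+$, which gives $\beta_N w_N=\bigl(s_N(F^Nx)-s_+(F^Nx)\bigr)^{-1}\ge 1/C$ (here compactness of the strip is used, via boundedness of $s_1$ and $s_{-1}$). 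But when $G_-(x)\ne G_+(x)$, $w$ is \emph{non}-principal at $+\infty$, the telescoped series converges to some finite $L>0$, so $\beta_N/w_N\to L$, and together with $\beta_N w_N\ge 1/C$ this only bounds $\beta_N^2$ from below — it does not force $\beta_N\to\infty$. Your closing concern (``ruling out $|w_N|\to 0$'') is in fact the harmless case: if $w_N\to 0$ the pairing already gives $\beta_N\to\infty$; the problematic case is $w_N$ bounded away from $0$ and $\infty$.

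The fix is to telescope against the solution $u^-_n:=\pi\circ DF^n(x)(1,s_-(x))$ directing $G_-$ instead. That solution is the \emph{principal} solution at $+\infty$ (it is the limit of solutions vanishing at $+m$, since $G_-=\lim G_{-m}$), so the telescoped series $\displaystyle\frac{\beta_N}{u^-_N}=\sum_{n=0}^{N-1}\frac{b_n}{u^-_n u^-_{n+1}}$ always diverges. Meanwhile the symplectic pairing with $G_-$, namely $\bigl|\omega\bigl(DF^N(x)(0,1),DF^N(x)(1,s_-(x))\bigr)\bigr|=1$, gives $\beta_N u^-_N\bigl(s_N(F^Nx)-s_-(F^Nx)\bigr)=1$, and again the compact strip bounds $s_N(F^Nx)-s_-(F^Nx)\le s_1(F^Nx)-s_{-1}(F^Nx)\le C$. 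Multiplying the two identities,
$$\beta_N^2=\Bigl(\beta_N u^-_N\Bigr)\cdot\frac{\beta_N}{u^-_N}\;\ge\;\frac{1}{C}\sum_{n=0}^{N-1}\frac{b_n}{u^-_n u^-_{n+1}}\;\longrightarrow\;+\infty,$$
uniformly in all cases, with no case split on $G_-$ versus $G_+$. This closes the gap and is exactly where, as you anticipated, the confinement to a compact strip enters.
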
 
In particular, if the Dynamics restricted to some invariant graph is totally periodic, then along this graph we have $G_-=G_+$ and the graph is $C^1$. The $C^1$ property can also  be proved  by using the implicit functions theorem.

\bibliographystyle{amsplain}

\begin{thebibliography}{10}




\bibitem{Arna1}  M.-C.~ Arnaud.Three results on the regularity of the curves that are invariant by an exact symplectic twist map, Publ. Math. Inst. Hautes Etudes Sci. 109, 1-17(2009)


\bibitem{Arna3} M.-C.~Arnaud,  Hyperbolicity for conservative twist maps of the 2-dimensional annulus, note of a course given in Salto, Publ. Mat. Urug. 16 (2016), 1--39.


\bibitem{ArnaXue} M.-C.~ Arnaud \& J.~Xue.  A $C^1$ Arnol'd-Liouville theorem.hal-01422530, to appear in Asterisque

 




Ann. Inst. Fourier (Grenoble) 52 (2002), no. 5, 1533Ð1568. 


\bibitem{Bir1} G.~D.~Birkhoff, Surface transformations and their dynamical application,
{\em Acta Math.} {\bf 43} (1920) 1-119.




\bibitem{CheSu} J.~Cheng \&  Y.~Sun, 
A necessary and sufficient condition for a twist map being integrable.  
Sci. China Ser. A 39 (1996), no. 7, 709--717








 \bibitem{Duis} J.J.~Duistermaat,  
On global action-angle coordinates. Comm. Pure Appl. Math. 33 (1980), no. 6, 687--706.


\bibitem{Eva}  L.~C.~Evans,  Weak K.A.M. theory and partial differential equations. Calculus of variations and nonlinear partial differential equations, 123--154, Lecture Notes in Math., 1927, Springer, Berlin, 2008.

Measure theory and fine properties of functions.
Revised edition. Textbooks in Mathematics. CRC Press, Boca Raton, FL, 2015. xiv+299 pp

\bibitem{Fa1} A.~Fathi,  Une interpr\'etation plus topologique de la d\'emonstration
du th\'eor\`eme de Birkhoff, appendice au ch.1 de \cite{He1}, 39-46.



\bibitem{FloLeC} A.~Florio \& P.~Le~Calvez, Torsion of instability zones for conservative twist maps of the annulus, preprint to appear in Nonlinearity.





\bibitem{He1}M.~Herman, {\em Sur les courbes invariantes par les
diff\'eomorphismes de l'anneau}, Vol. 1, Asterisque {\bf 103-104} (1983).

\bibitem{Her2}  M.~R.~Herman,   {\it Sur la conjugaison diff\'erentiable des diff\'eomorphismes du cercle \`a des rotations}. (French) Inst. Hautes \'Etudes Sci. Publ. Math. No. 49 (1979), 5--233. 

 \bibitem{HiPuSh} M.~W.~Hirsch, C.~C.~Pugh \&  M.~Shub, Invariant manifolds. Lecture Notes in Mathematics, Vol. 583. Springer-Verlag, Berlin-New York, 1977. ii+149 pp


\bibitem{KFT} A.~Kolmogorov, S.~Fomine \& V.~M.~Tihomirov, 
El\'ements de la th\'eorie des fonctions et de l'analyse fonctionnelle. (French)
Avec un compl\'ement sur les alg\`ebres de Banach, par V. M. Tikhomirov. Traduit du russe par Michel Dragnev. \'Editions Mir, Moscow, 1974. 536 pp. 


\bibitem{Man1}  R.~Ma\~n\'e, Ergodic theory and differentiable Dynamics. Translated from the Portuguese by Silvio Levy. Ergebnisse der Mathematik und ihrer Grenzgebiete (, 8. Springer-Verlag, Berlin, 1987. xii+317 pp.





\bibitem{Min2014} E.~Minguzzi,  The equality of mixed partial derivatives under weak differentiability conditions. Real Anal. Exchange 40 (2014/15), no. 1, 81--97


 \bibitem{OhMu} Y.~G.~Oh \& S.~M\"uller,  
The group of Hamiltonian homeomorphisms and $C^0$-symplectic topology.  
J. Symplectic Geom. 5 (2007), no. 2, 167--219.



 \bibitem{Rudin} W.~Rudin, Principles of Mathematical Analysis. Third Edition. McGraw-Hill, Inc. (1976).
 
 \bibitem{MaSor} D.~Massart \& A.~Sorrentino, 
Differentiability of Mather's average action and integrability on closed surfaces.  Nonlinearity 24 (2011), no. 6, 1777--1793.



 

\end{thebibliography}

\end{document}